\newcommand{\ds}{\displaystyle}
\newtheorem{proposition}{Proposition}
\newtheorem{lemma}[proposition]{Lemma}
\newtheorem{assumption}{Assumption}
\newtheorem{remark}{Remark}
\newcommand{\R}{{\mathbb R}}
\newcommand{\N}{{\mathbb N}}
\renewcommand{\Re}{\mathbb{R}}
\newcommand{\commento}[1]{}
\newcommand{\CMA}{{\texttt{CMA}}}
\newcommand{\NMCMA}{{\texttt{NMCMA}}}
\definecolor{gray}{gray}{0.4}
\begin{document}

\title[Article Title]{Convergence  of ease-controlled Random Reshuffling gradient Algorithms under Lipschitz smoothness}

\author[1]{\fnm{Ruggiero} \sur{Seccia}}\email{ruggiero.seccia@outlook.com}

\author[1]{\fnm{Corrado} \sur{Coppola}}\email{corrado.coppola@uniroma1.it}
\author[1]{\fnm{Giampaolo} \sur{Liuzzi}}\email{giampaolo.liuzzi@uniroma1.it}

\author[1]{\fnm{Laura} \sur{Palagi}}\email{laura.palagi@uniroma1.it}

\affil*[1]{\orgdiv{Department of Computer, Control and Management Engineering A. Ruberti}, \orgname{Sapienza, University of Rome}, \orgaddress{\street{Via Ariosto 25}, \city{Rome}, \postcode{00185}, \state{RM}, \country{Italy}}}


\abstract{In this work, we consider minimizing the average of a very large number of smooth and possibly non-convex functions and we focus on two widely used minibatch frameworks to tackle this optimization problem: Incremental Gradient (IG) and Random Reshuffling (RR). 
We define ease-controlled modifications of the IG/RR schemes, which require a light additional computational effort  {but} can be proved to converge under {weak} and standard assumptions.
 In particular, we define two algorithmic schemes in which the IG/RR iteration is controlled by using a watchdog rule and a derivative-free linesearch that activates only sporadically to guarantee convergence. The two schemes differ in the watchdog and the linesearch, which are performed using either a monotonic or a non-monotonic rule. The two schemes also allow controlling the updating of the stepsize used in the main IG/RR iteration, avoiding the use of pre-set rules that may drive the {stepsize} to zero too fast, 
 {reducing the effort in designing effective updating rules}  of the stepsize. We prove convergence under the mild assumption of Lipschitz continuity of the gradients of the component functions and perform extensive computational analysis using different deep neural architectures and a benchmark of varying-size datasets. We compare our implementation with both a full batch gradient method (i.e. L-BFGS) and an implementation of IG/RR methods, proving that our algorithms require a similar computational effort compared to the other online algorithms and that the control on the learning rate may allow a faster decrease of the objective function.}

\keywords{Finite-sum, Lipschitz Smooth, minibatch method, non-monotone schemes}



\maketitle

\section{Introduction}\label{sec1}

In this paper, we consider the finite-sum optimization problem

\begin{equation}\label{eq: problem}
    \underset{w\in \R^n}{\min }\qquad f(w)=\sum_{p=1}^Pf_p(w)
\end{equation}
where $f_p:\R^n\to \R$, $p=1,\dots,P$ are  continuously differentiable and possibly non-convex functions.

Problem \eqref{eq: problem} is a well-known optimization problem that plays a key role in many applications and has consequently attracted researchers from many different fields. Some applications can be found in e.g. methods for solving large-scale feasibility problems by minimizing the constraints violation \cite{mayne1981solving,zhang2010nonmonotone} or for tackling soft constraints by a penalization approach, or  
{Stochastic Programming} problems, where the elements of the sum represent  a set of realizations of a random variable that can 
take a very large  set of potential finite values \cite{Nocedal,bollapragada2018adaptive}. Similar problems also arise in  {Sensor Networks}, where inference problems are solved in a distributed way, namely without gathering all the data in a single data center but storing part of the data in different databases \cite{bertsekas2011incremental}.

One of the most important applications which rely on the solution of very large finite-sum problems is the training phase of {Machine Learning} models (and the tools based on that) \cite{shalev2014understanding,Nocedal}
 where \eqref{eq: problem} represents the regularized empirical risk minimization problem, and the goal is to find the model parameters $w$ which minimizes the average loss on the $P$ observations.
The computational  per-iteration  burden needed to solve problem \eqref{eq: problem} depends on the size of the problem itself, namely both on the number of terms $P$ in the summation (which is large e.g. in big data setting) and on the size of the variable vector $n$ (which is large  e.g. in deep networks framework). 
Traditional  first or higher-order methods for smooth optimization can be naively applied to solve this problem \cite{NoceWrig06,bertsekasbook2016}. 
{However, these methods (a.k.a. \textit{batch methods}) usually use at each iteration all the $P$ samples to update the full vector $w$ and become too computationally expensive when $P$ is very large. Thus, }
there is an incentive to use less expensive per-iteration methods that exploit the structure of the objective function to reduce the computational burden meanwhile avoiding slowing down the convergence process.  

Widely used approaches for solving problem \eqref{eq: problem} are online or mini-batch gradient methods, which cycle through the component functions using a random or deterministic order and update the iterates using the gradient of a single or a small batch of component functions.  The reason for the success of online/mini-batch methods mainly relies on the different trade-offs in terms of per-iteration costs and expected per-iteration improvement in the objective function (see e.g. comments in \cite{Nocedal}).  An entire pass over all the samples $P$ is called an epoch, and solving problem \ref{eq: problem} usually requires tens to hundreds of epochs, even if a single epoch can be prohibitive when $P$ is huge (depending on available computational time).

These methods typically outperform full batch methods in numerical studies since each inner iteration makes reasonable
progress towards the solution {providing a good trade-off between per-iteration improvement and cost-per-iteration}, possibly enabling early stopping rules which are used to avoid overfitting in ML.

 Among the motivations for using such mini-batch methods, there is also  an implicit  randomness injection to the basic gradient iteration that can {lead to} better performance both in training and generalization. 
{Indeed, since the search direction need not be a descent direction, these methods possess an inherent non-monotonicity which allows them to explore larger regions thus, possibly escaping regions of attraction of inefficient local minimizers or saddle points.} Moreover, it has been  observed that the larger the batch size, the higher the chance to enter the basin of attraction of sharp minimizers where the function changes rapidly \cite{keskar2016large}, negatively impacting the ability of the trained model to generalize on new data \cite{hochreiter1997flat}.

The milestone methods in this class {of algorithms} are the Stochastic Gradient Descent (SGD), a.k.a. Robbins-Monro algorithm, \cite{Robbins&Monro:1951} which selects samples using a random with-replacement rule, and the Incremental Gradient algorithm (IG), which selects samples  following a cyclic deterministic order,  \cite{grippo1994class,luo1991convergence,bertsekas2011incremental}.
Finally, in practical implementations of mini-batch algorithms, a broadly used method is the Random Reshuffling algorithm (RR) which selects samples using a without-replacement sampling strategy, consisting in  shuffling the order of samples at each epoch, and processing that permutation by batches. RR often converges faster than SGD on many practical problems (see e.g. \cite{NEURIPS2020_c8cc6e90,bengio2012practical}).  Intuitively, without-replacement sampling strategies process data points more equally and are often easier and faster to implement, as it requires sequential data access, instead of random data access.  Empirical results show how RR methods  often lead to better performance than other mini-batch methods \cite{Bottou2012,shamir2016without, cannelli2020asynchronous}.

Most of the state-of-the-art methods are variants of these methods
\cite{Goodfellow-et-al-2016} that can be categorized mainly in \textit{gradient aggregation} and \textit{adaptive sampling} strategies. The former approach is based on improving the gradient estimation by considering the estimates computed in the previous iterations, such as in SVRG \cite{NIPS2013_4937}, SAG \cite{roux2012stochastic},  and Adam  \cite{Konur2013}. 
Gradient aggregation methods usually lead to good numerical performance, but their convergence has been proved in very few cases such as (strongly) convex functions, and usually require extra storage or full gradient computations (see e.g. IAG \cite{blatt2007convergent}, or SAG \cite{schmidt2017minimizing}), both limiting factors when $P$ is very large \cite{de2017automated}. Adaptive sampling strategies, instead, follow approaches that dynamically modify the number of terms $f_p$ used to estimate $\nabla f$ in order to improve the estimates. They require milder assumptions to converge, thus applying to a broader class of problems, but are less computationally efficient. Theoretically, they require the batch size to increase to infinity (see e.g. \cite{de2017automated,friedlander2012hybrid}) or the computation of exact quantities (e.g. the gradient $\nabla f$), or good estimates of them \cite{bollapragada2018adaptive,byrd2012sample}. 
 
By computing only approximations of $\nabla f$, mini-batch methods have worse convergence properties with convergence rates usually slower than full batch methods. Moreover, when it comes to the assumptions needed to prove convergence, mini-batch methods usually require {stronger} conditions to be satisfied with respect to full batch gradient methods.
Indeed, most of the full batch gradient methods only require Lipschitz-smoothness of each of the components $f_i$ that allow proving the Descent Lemma,  and in turn convergence for sufficiently small and fixed stepsize is proved.
Instead, when analyzing mini-batch methods, in an  either {probabilistic} or deterministic fashion, additional requirements are needed, and the stepsize must be driven to zero (which in turn allows driving the error to zero as well). The convergence analysis and related assumptions are quite different for random or deterministic sampling.

More specifically when analyzing the convergence and rate of SGD methods (sample with replacement), it is required either convexity and/or some kind of growth conditions on the second moments, such  e.g. the relaxed growth condition $\mathbb{E}(\|\nabla f_i(w)\|^2)  \le a+b \|\nabla f({w}) \|^2$ used in the analysis reported in \cite{Nocedal}.
In the recent paper \cite{khaled2020better}, the most common assumptions for proving convergence of SGD are reviewed and 
new  results have been proved using the 
expected smoothness assumption,  $\mathbb{E}(\|\nabla f_i(w)\|^2)  \le a+b \|\nabla f({w}) \|^2+c(f(w)-f_{inf})$, where $f_{inf}$ is a global lower bound on the function.
Expected smoothness is proved to be the weakest condition on the second moment among the existing ones and convergence of SGD is proved under the Polyak-Lojasiewicz condition (a generalization of strongly-convex functions).
In general, many papers have been devoted to study convergence and/or rate of convergence of modification of the basic SGD possibly weakening the required assumptions  (see e.g 
\cite{schmidt2017minimizing,NIPS2014_5258,gower2021sgd,gower2021stochastic,lei2019stochastic,chen2018lag,duchi2018stochastic,asi2019stochastic,nguyen2022finite,nguyen2021unified,Bottou10large-scalemachine,ghadimi2013stochastic}).

Regarding IG, convergence has been proved in \cite{bertsekas2011incremental} under the strong  assumption  $\|\nabla f_h({w}) \| \le a+b \|\nabla f({w}) \|$, for some positive constants $a,b$. This assumption holds for a few classes of functions, e.g. linear least squares \cite{Bertsekas2000}. Later, in \cite{solodov1998incremental} convergence is proved with a stepsize bounded away from zero, under the assumption that  $\|\nabla f_h(w)\|\le M$ for some positive constant $M$, which is a restrictive condition as it rules out strongly convex objectives. Finally, other deterministic schemes have been proposed (e.g. the Incremental Aggregated Gradient (IAG)
method for minimizing a finite sum of smooth functions where the sum is strongly convex \cite{gurbuzbalaban2017convergence}).

Proving convergence for the RR algorithm is even more difficult because without-replacement sampling introduces correlations and dependencies among the sampled gradients within an epoch that are harder to analyze, as discussed in \cite{gurbuzbalaban2021random,shamir2016without}. 
Recently, in \cite{NEURIPS2020_c8cc6e90} a radically different and fairly simpler analysis of RR has been developed that allows having a better insight into the behavior of the algorithm. They also discuss the convergence rate of the Shuffle-Once algorithm
\cite{nedic2001incremental} which shuffles the data only once before the training begins, and as a by-product obtained new results for IG. 
However, most of the convergence results of the RR algorithm and its variants require additional assumptions besides the $L$-smoothness,  such as (strong) convexity \cite{gurbuzbalaban2021random,shamir2016without,NEURIPS2020_c8cc6e90,malinovsky2021random,mishchenko2022proximal,sadiev2022federated},  some other kind of generalized convexity 
\cite{li2021convergence,ahn2020sgd} or conditions on the gradients of the single functions \cite{malinovsky2022server, NEURIPS2020_c8cc6e90}.
Quite recently in \cite{malinovsky2022server}, a new result on the convergence of a slight  modification of the basic RR method in the non-convex case has been proved. Their approach is closely related to our proposal in the fact that they prove effectiveness of making extra step along the basic RR direction. However, the value of the Lipschitz constant is used.

Therefore, although online methods are broadly applied to solve large finite-sum problems, especially in the machine learning field, convergence analysis for such algorithms in the non-convex setting is still far from satisfactory.
 
Another critical issue in this class of methods is the practical choice of the decreasing rule for the learning rate.
In practice, a preferred schedule is chosen manually by testing many different schedules in advance and choosing the one leading to the smallest training or generalization error. This process can be computationally heavy and can become prohibitively expensive in terms of time and computational resources incurred \cite{ward2020adagrad}.
Adaptive  gradient methods such as AdaGrad \cite{Duchi:2011:ASM:1953048.2021068,ward2020adagrad}, update the stepsizes on-the-fly according to the gradients received along the iterations. Recently, in \cite{ward2020adagrad} convergence of the Norm-AdaGrad has been proved without convexity  but requiring the strong assumption that $\|\nabla F(w)\|^2\le \gamma^2$ uniformly.

Therefore, it is evident how the trade-off between theoretical properties and cost per iteration plays a crucial role in the definition of optimization methods for large-scale finite-sum problems. Establishing convergence of an algorithm with mild assumptions requires a computational overhead to determine exact quantities or good estimations with low variance. On the other hand, the cost per iteration of the algorithm can be reduced at the extra-cost of requiring more stringent assumptions on the objective function and, thus, reducing the range of theoretical applicability of the algorithm. 

\section{Assumptions and summary of contributions}\label{sec:motiva}

In this paper, we focus on ease-controlled modifications of Random Reshuffling gradient (RR) methods, including the case of the IG method. We aim to define schemes that converge under mild assumptions, and in particular, without requiring either convexity assumptions (or similar-like) on the functions or growth/bounded conditions on the gradients of the single components $f_p$.

In particular, throughout the paper, we require the following two assumptions to hold.

\begin{assumption}\label{ass:compact}
The function $f$ is coercive, i.e. all the level sets 
\[
  {\cal L}(w_0) = \{w\in\Re^n:\ f(w) \leq f(w_0)\}
\]
are compact for any $w_0\in\Re^n$.
\end{assumption}

\begin{assumption}\label{ass:lipschtizagradient}
The gradients $\nabla f_p$ are Lipschitz continuous for each $p=1,\dots, P$, namely there exists $L>0$ such that
\[
 \|\nabla f_p(u)-\nabla f_p(v)\|\le L\|u-v\|\qquad \forall \ u,w\in\R^n
\]
\end{assumption}

Assumption \ref{ass:compact} enforces the existence of a global solution.  It is trivially satisfied e.g. in training problems for Deep Neural Networks (DNNs) when using a $\ell_2$  regularization term for the empirical error. 
Assumption \ref{ass:lipschtizagradient} is a standard and mild assumption in gradient-based methods.

The idea at the basis of the proposed schemes, firstly proposed in \cite{secciaPhD2020}, consists in perturbing as less as possible the basic iteration of the RR/IG gradient algorithm to retain their light computational effort and the low memory requirement per iteration. 
The ingredients used are indeed the basic mini-batch iteration and tools derived from derivative-free methods. 

In particular, we define two algorithmic schemes in which the IG/RR iteration is controlled by using a watchdog rule and a derivative-free linesearch that activates only sporadically to guarantee convergence. The two schemes differ in the watchdog and the linesearch procedures, which are performed using either a monotonic or a non-monotonic rule.
The two schemes also allow controlling the updating of the learning rate used in the main IG/RR iteration, avoiding the use of pre-set rules that may drive it to zero too fast,
thus overcoming another challenging aspect in implementing online methods, which is the updating rule of the stepsize.
We prove convergence under the mild assumption of Lipschitz continuity of the gradients of the component functions.

To this purpose, we introduce a Controlled mini-batch Algorithm (\CMA) whose basic idea is to control both the iteration and the stepsize of RR mini-batch gradient iteration. Indeed, a  trial point $\widetilde w^k$ is obtained by using a  mini-batch  gradient  method with a fixed stepsize. The trial point can be either accepted when a \textit{watchdog}-flavoured condition \cite{chamberlain1982watchdog} is verified and a sufficient decrease of the objective function is guaranteed. When this is not satisfied, we resort to a linesearch procedure to enforce convergence. However, 
since we want to avoid computing the gradient even periodically,
we adopt a  derivative-free-type linesearch  \cite{grippo1988global,lemarechal1981view,lucidi2002global}, which is used to define the new iteration and to possibly update the stepsize used in the mini-batch cycle. 
Therefore, the extra cost to pay for lightening the assumptions needed to prove convergence is given by the function evaluation needed at the end of each epoch and 
possibly in the linesearch.

The Controlled mini-batch Algorithm (\CMA) is proposed in two versions, a monotone and non-monotone version which are called \CMA\ and \NMCMA\ respectively. The two schemes require different watchdog and linesearch updating rules and are not comparable from a theoretical point of view in the final effort required. 

In order to assess the performance of the proposed methods, numerical results have been carried out by solving the optimization problem behind the learning phase of Deep Neural Networks. We perform  an extensive numerical experiments using different Deep  Neural Architectures on a benchmark of varying size datasets. We compare our implementation with both a full batch gradient method (i.e. L-BFGS) and a standard implementation of IG/RR online methods, proving that the computational effort is comparable with the corresponding online methods and that the control on the learning rate may allow a faster decrease in the objective function.  The numerical results on standard test problems suggest the effectiveness of \CMA-based methods in leading to better performance results than state-of-the-art batch methods (i.e. L-BFGS) and standard mini-batch methods (i.e. IG), by finding better solutions with smaller generalization errors when dealing with large problems. However, although from the  computational results reported in this paper \NMCMA\ slightly outperforms \CMA\, from a theoretical perspective it is not possible to claim a superior performance of \CMA\ over \NMCMA\ or vice versa.

The remainder of the paper is organized as follows. In  Section \ref{sec:motiva} we present the assumptions used in the paper and our main contributions. In Section \ref{sec:preliminaries} we review the properties of the basic mini-batch iteration. In  Section \ref{sec:CMA} we first provide the idea behind Controlled mini-batch Algorithms, then we introduce the two frameworks, a monotone version \CMA\ and a non-monotone version \NMCMA\, and for each of them provide convergence results under very mild standard assumptions. 
In section \ref{sec:CMA_numerical} the effectiveness of two methods is tested 
.  Finally, in section \ref{sec:CMA_conclusion} conclusions on the novel  optimization approach are drawn.

\section{A unified framework for mini-batch gradient algorithms and basic convergence properties}\label{sec:preliminaries}
Given the problem \eqref{eq: problem}, a Mini-batch Gradient (MG) method updates the current iteration by using a small batch of the samples {${ I}_p\subset\{1,\dots,P\}$}, that for the sake of simplicity and without loss of generality will be shortened up as a single term $p$, thus we denote 
$$\ds f_{p}(w)=\sum_{h_i\in I_p} f_{h_i}(w)\text{ and }\ds \nabla f_{p}(w)=\sum_{h_i\in I_p} \nabla f_{h_i}(w)$$

An outer iteration $w^k$ of the method  is obtained after a {full} cycle over $P$ inner iterations,  which is called an \emph{epoch}. Starting with $\widetilde w_{0}\in \R^n$ the inner iterations updates the point as 

\begin{equation}\label{eq:basicIG}
\widetilde w_{i}= \widetilde w_{i-1} - {\zeta} \nabla f_{h_i}(\widetilde w_{i-1})\qquad i=1,\dots, P
\end{equation}
where $h_1 \dots h_P$ represent  a permutation of the indexes $\{1,\dots,P\}$ and $\zeta>0$ is the stepsize (or learning rate) which we consider  constant over the epoch.

Each inner step of the method \eqref{eq:basicIG} is thus very cheap, involving only the computation of the gradient 
$\nabla f_{h_i}(\widetilde w_{i-1})$
corresponding
to a mini-batch of samples.

We first observe that
we have, for all $i=1,\dots,P$,
\begin{equation}\label{eq:basicIG2}
\widetilde w_i = \widetilde w_0-\zeta \sum_{j=1}^i \nabla f_{h_j}(\widetilde w_{j-1}). 
\end{equation}

In 
Incremental Gradient  \cite{bertsekas2011incremental,Bertsekas2000} and Random Reshuffling methods \cite{gurbuzbalaban2021random}, the sequence $\{w^k\}$  is obtained as 
$w^k=\widetilde w_P^{k-1}$ for $k=1,2,\dots$ where $\widetilde w^k_P$ is obtained as in \eqref{eq:basicIG} starting with $\widetilde w^k_0=w^k$
so that for each $k=1,2,\dots $ at the end of each epoch we obtain
$$\widetilde w^k_P = \widetilde w^k_0-\zeta \sum_{j=1}^P \nabla f_{h_j}(\widetilde w^k_{j-1}), $$
where in
IG or Shuffle-Once
samples are always chosen in the same order (which is fixed in a deterministic or random way respectively)  per epoch, e.g. $h_1=1, \dots, h_P=P$ for all $k$, whereas in RR methods $h_1, \dots ,h_P$ are 
{picked via a sampling without replacement from the set of $P$ terms available.}

Convergence properties of these methods are studied with respect to the behaviour of the outer sequence $\{w^k\}$. 

In this section, 
we aim to derive properties of the sequence $\{w_i^k\}$  generated by  \eqref{eq:basicIG} when it is encompassed in a generic outer scheme where a sequence $\{w^k\}\subseteq \R^n$ and a sequence of nonnegative scalars $\zeta^k$ are given and each element $w^k$ of the sequence represents the starting point $\widetilde w^k_{0}$ of the inner MG iterations.
Thus, we consider the simple iterative scheme reported in Algorithm \ref{alg:IG}

\begin{algorithm}[htb]
\begin{algorithmic}[1]
    \State Given the sequences $\{\zeta^k\}$, and $\{w^k\}$ with $\zeta^k \in \R_+$ and $w^k\in\R^n$
    \For {$k=0,1,2\dots $}       
    \State Compute $(\widetilde w^k, d^k)$ = \texttt{Inner\_Cycle}($w^k,\zeta^k$)
    \EndFor
\end{algorithmic}
\caption{mini-batch Gradient (MG) iteration}
\label{alg:IG}
\end{algorithm}

\begin{algorithm}[htb]
\begin{algorithmic}[1]
    \State {\bf Input}: $w^k$, $\zeta^k$
    \State Set $\widetilde w_0^k = w^k$
    \State Let $I^k=\{h_1^k,\dots ,h_P^k\}$ be a permutation of $\{1,\dots ,P\}$
    \For {$i =1,...,P$}
        \State $\tilde d_i^k = -\nabla f_{h_i^k}(\widetilde w_{i-1}^k)$
        \State $\widetilde w_{i}^k= \widetilde w_{i-1}^k + {\zeta^k} \tilde d_i^k$
    \EndFor
    \State {\bf Output} $\widetilde w^k = \widetilde w^k_P$ and $d^k = \displaystyle\sum_{i=1}^P\tilde d_i^k$
\end{algorithmic}
\caption{\texttt{Inner\_Cycle}}
\label{alg:innercycle}
\end{algorithm}

The \texttt{Inner\_Cycle} consists in iteration \eqref{eq:basicIG}, applied with starting point $\widetilde w_0^k = w^k$ and returns  $\widetilde w^k=\widetilde w^k_P$ and $\displaystyle d^k = -\displaystyle\sum_{i=1}^P\nabla f_{h_i^k}(\widetilde w_{i-1}^k)$
obtained {at the end of each epoch.}

Depending on the assumptions on the sequences $\{w^k\}$ and $\{\zeta^k\}$ we can prove different properties satisfied by the sequences
$\{\widetilde w^k\}$, $\{\widetilde w_{i}^k\}$ for $i=1,\dots, P$, and  $\{d^k\} $ generated by the overall scheme.

{Under Assumption \ref{ass:lipschtizagradient}, iteration \eqref{eq:basicIG} has the following property,} whose proof is reported in the Appendix.

\begin{lemma}\label{prop:Lipschitz}
Let Assumption \ref{ass:lipschtizagradient} hold and let $\widetilde w_i$ be defined by iteration \eqref{eq:basicIG}.
Then we have, for all $i=1,\dots,P$,
\begin{eqnarray}\label{boundnorm_i_noK_2}
\|\widetilde w_i - \widetilde w_{0}\|  & \leq &  \zeta\left(  
L\sum_{j=1}^{i}\|\widetilde w_{j-1} - \widetilde w_0\| +\sum_{j=1}^{i}  \|\nabla f_{h_j}(\widetilde w_0)\|\right).
\end{eqnarray}
\end{lemma}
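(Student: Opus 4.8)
The plan is to build directly on the closed-form unrolling \eqref{eq:basicIG2}, which already expresses the displacement of the inner iterate from its starting point as a sum of mini-batch gradients:
\[
\widetilde w_i - \widetilde w_0 = -\zeta \sum_{j=1}^i \nabla f_{h_j}(\widetilde w_{j-1}).
\]
Taking norms on both sides and applying the triangle inequality to pull the norm inside the sum immediately yields
\[
\|\widetilde w_i - \widetilde w_0\| \le \zeta \sum_{j=1}^i \|\nabla f_{h_j}(\widetilde w_{j-1})\|.
\]
At this point the right-hand side involves gradients evaluated at the \emph{moving} intermediate points $\widetilde w_{j-1}$, whereas the target bound \eqref{boundnorm_i_noK_2} is phrased entirely in terms of gradients at the single \emph{fixed} reference point $\widetilde w_0$ together with the displacements $\|\widetilde w_{j-1}-\widetilde w_0\|$. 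Reconciling these two is the only substantive step.

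To do so, I would bound each summand by an add-and-subtract argument centered at $\widetilde w_0$: writing $\nabla f_{h_j}(\widetilde w_{j-1}) = \bigl(\nabla f_{h_j}(\widetilde w_{j-1}) - \nabla f_{h_j}(\widetilde w_0)\bigr) + \nabla f_{h_j}(\widetilde w_0)$ and splitting the norm gives
\[
\|\nabla f_{h_j}(\widetilde w_{j-1})\| \le \|\nabla f_{h_j}(\widetilde w_{j-1}) - \nabla f_{h_j}(\widetilde w_0)\| + \|\nabla f_{h_j}(\widetilde w_0)\|.
\]
Assumption \ref{ass:lipschtizagradient} controls the first term on the right by $L\|\widetilde w_{j-1}-\widetilde w_0\|$, producing the per-term estimate $\|\nabla f_{h_j}(\widetilde w_{j-1})\| \le L\|\widetilde w_{j-1}-\widetilde w_0\| + \|\nabla f_{h_j}(\widetilde w_0)\|$. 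Summing this over $j=1,\dots,i$ and multiplying through by $\zeta$ reproduces exactly the two sums on the right-hand side of \eqref{boundnorm_i_noK_2}, which closes the proof.

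There is no genuine obstacle here — the result follows from the closed form plus a single Lipschitz estimate — but the step worth flagging is precisely the choice to re-expand every gradient at the common base point $\widetilde w_0$ rather than leaving it at the intermediate iterate. This is deliberate: the resulting inequality couples $\|\widetilde w_i-\widetilde w_0\|$ on the left to the earlier displacements $\|\widetilde w_{j-1}-\widetilde w_0\|$ on the right through the factor $\zeta L$, which is exactly the structure of a discrete Gr\"onwall-type recursion. That form is what makes the estimate useful downstream for obtaining a uniform-in-$i$ control of the inner-iterate deviation, so I would keep the bound in this shape rather than simplifying it further.
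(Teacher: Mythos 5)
Your proof is correct and follows essentially the same route as the paper's: unroll the iteration via \eqref{eq:basicIG2}, add and subtract $\nabla f_{h_j}(\widetilde w_0)$ in each summand, and apply the triangle inequality together with Assumption \ref{ass:lipschtizagradient}. The only cosmetic difference is that the paper performs the add-and-subtract inside the norm before splitting the sum (and separately notes the trivial case $i=1$), whereas you split first and then re-center each term, which is the same estimate.
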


We start by proving that when both $\{w^k\}$ and $\{\zeta^k\}$ are bounded,  then the sequences $\{\widetilde w_i^k\}$ are bounded too for all $i=1,\dots, P$.

\begin{proposition}
Let Assumption \ref{ass:lipschtizagradient} hold and let $\{w^k\}$ and $\{\zeta^k\}$ be bounded sequences of points and positive scalars. 
Let $\{\widetilde w_i^k\}$ for $i=1,\dots,P$ be the sequences generated by Algorithm \ref{alg:IG}.

Then, the sequences $\{\widetilde w_i^k- w^k\}$, for $i=1,\dots,P$, are all bounded.
\end{proposition}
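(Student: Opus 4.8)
The plan is to apply Lemma \ref{prop:Lipschitz} epoch by epoch and then close a discrete Gronwall-type recursion by a finite induction on the inner index $i$.

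First I would observe that, by construction, $\widetilde w_0^k = w^k$, so the quantity to be bounded is exactly $s_i^k := \|\widetilde w_i^k - \widetilde w_0^k\|$, with $s_0^k = 0$. Applying Lemma \ref{prop:Lipschitz} to the $k$-th epoch (with $\zeta = \zeta^k$ and $\widetilde w_0 = w^k$) gives
\[
s_i^k \le \zeta^k\Big(L\sum_{j=1}^{i} s_{j-1}^k + \sum_{j=1}^{i}\|\nabla f_{h_j^k}(w^k)\|\Big).
\]
The crucial feature here is that every gradient on the right is evaluated at the \emph{single} point $w^k$, not at the intermediate iterates; this is precisely what lets the argument go through cleanly.

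Next I would produce the two uniform constants required. Since $\{w^k\}$ is bounded it is contained in a compact set, and since each $\nabla f_p$ is Lipschitz (Assumption \ref{ass:lipschtizagradient}) it is continuous, hence bounded on that set; taking the maximum over the finitely many indices $p = 1,\dots,P$ yields a constant $M>0$ with $\|\nabla f_{h_j^k}(w^k)\|\le M$ for all $j,k$. Boundedness of $\{\zeta^k\}$ gives $\zeta^k \le \bar\zeta$. Substituting these bounds and using $i \le P$ leaves the recursion
\[
s_i^k \le \bar\zeta L\sum_{j=0}^{i-1} s_j^k + \bar\zeta P M .
\]

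Finally I would close the recursion by induction on $i \in \{0,1,\dots,P\}$, producing constants $\bar M_i$ independent of $k$: set $\bar M_0 = 0$, and given $s_j^k \le \bar M_j$ for all $j<i$ and all $k$, the displayed inequality yields $s_i^k \le \bar\zeta L \sum_{j=0}^{i-1}\bar M_j + \bar\zeta P M =: \bar M_i$. Since $P$ is finite this terminates after finitely many steps and delivers a uniform bound $\bar M_i$ for each $i$, which is the assertion. I do not expect any genuine obstacle: the only point needing a little care is the uniform gradient bound $M$, which rests on combining boundedness of $\{w^k\}$ with continuity of the gradients, together with the observation that the sum in Lemma \ref{prop:Lipschitz} involves only the strictly earlier terms $s_0^k,\dots,s_{i-1}^k$, so the induction closes without circularity.
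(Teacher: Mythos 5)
Your proof is correct and follows essentially the same route as the paper's: both apply Lemma \ref{prop:Lipschitz} within each epoch and close the argument by a finite induction on the inner index $i$, using boundedness of $\{w^k\}$, $\{\zeta^k\}$ and continuity of the gradients (the paper merely leaves the uniform constants $M$, $\bar\zeta$, $\bar M_i$ implicit where you spell them out). Your observation that the sum in the lemma involves only the strictly earlier terms $s_0^k,\dots,s_{i-1}^k$, so the induction is not circular, is exactly the point on which the paper's induction step also rests.
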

\begin{proof} The proof is by induction. 
Using the definition of $\widetilde w_i^k$ as in Step 5 and Step 6 of Algorithm \ref{alg:innercycle} we can write for $i=1$
\[
\|\widetilde w_1^k - \widetilde w_0^k\| = \|\widetilde w_1^k - w^k\| = \zeta^k\|\nabla f_{h^k_1}(w^k)\|
\]
which, considering the boundedness of $w^k$, $\zeta^k$ and continuity of $\nabla f_{h^k_1}$, proves that $\{\widetilde w_1^k - \widetilde w_0^k\}$ is bounded. 


Thus, we assume that $\{\widetilde w_j^k- w^k\}$ are bounded for all $j=1,\dots,i-1$, and prove that $\{\widetilde w_i^k- w^k\}$ is bounded as well. In fact, thanks to Lemma \ref{prop:Lipschitz} we can write
$$
\|\widetilde w^k_i -  w^k\|   \leq   \zeta^k\left(  
L\sum_{j=1}^{i}\|\widetilde w^k_{j-1} - w^k\| +\sum_{j=1}^{i}  \|\nabla f_{h^k_j}(w^k)\|\right),
$$
which, by the induction assumption, boundedness of $ w^k$ and $\zeta^k$ and the continuity of each $\nabla f_i$, proves boundedness of $\{\widetilde w^k_i -  w^k\}$ for all $i$.
\end{proof}

{We now prove that when the stepsize $\zeta^k$ is driven to zero,
the bounded sequences $\{\widetilde w^k_i \}$ converge to the same limit point of the outer sequence $\{w^k\}$ and that the sequence of directions $\{d^k\}$ converges to   
$-\nabla f(\bar w)$. 
}
{This result is at the basis of the convergence of IG and RR methods.}
\begin{proposition}\label{limtildew_general}\label{limtildew_CMA2}\label{limtildew_NMCMA}
Let Assumption \ref{ass:lipschtizagradient} hold. Assume that $\{w^k\}$ is bounded and that $\lim_{k\to\infty}\zeta^k = 0$. Let $\{\widetilde w^k_i\}$ and $\{d^k\}$ be the sequences of points and directions produced by Algorithm \ref{alg:IG}. 

Then, for any limit point $\Bar w$ of $\{w^k\}$ a subset of indices $K$ exists such that 
    \begin{eqnarray}
        && \lim_{k\to \infty,k\in K} w^k=\Bar w,\label{assert_C}\\
        && \lim_{k\to \infty,k\in K} \zeta^k=0,\label{assert_D}\\
        && \lim_{k\to\infty,k\in K}\widetilde w^k_i =\bar w,\quad\mbox{for all $i=1,\dots, P$}\label{assert_A_0}\\
        && \lim_{k\to\infty,k\in K} d_k = -\nabla f(\bar w).\label{assert_B_0}
    \end{eqnarray}
\end{proposition}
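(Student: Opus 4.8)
The plan is to read off the subsequence directly from the definition of a limit point, then propagate the two convergences $w^k\to\bar w$ and $\zeta^k\to 0$ through the inner--cycle recursion via Lemma \ref{prop:Lipschitz}, and finally exploit the fact that each $I^k$ is a \emph{permutation} of $\{1,\dots,P\}$ to handle the direction $d^k$.

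First I would fix a limit point $\bar w$ of $\{w^k\}$ and let $K$ be a subsequence along which $w^k\to\bar w$; this is exactly \eqref{assert_C}. Assertion \eqref{assert_D} then costs nothing, since $\zeta^k\to 0$ holds for the whole sequence and hence along $K$. For \eqref{assert_A_0}, I would apply Lemma \ref{prop:Lipschitz} with $\widetilde w_0=w^k$ and $\zeta=\zeta^k$, yielding
\[
\|\widetilde w_i^k - w^k\| \le \zeta^k\left(L\sum_{j=1}^{i}\|\widetilde w_{j-1}^k - w^k\| + \sum_{j=1}^{i}\|\nabla f_{h_j^k}(w^k)\|\right).
\]
The bracketed quantity is bounded: the displacements $\|\widetilde w_{j-1}^k - w^k\|$ are bounded by the preceding proposition, while boundedness of $\{w^k\}$ together with there being only $P$ component functions makes the gradient norms $\|\nabla f_{h_j^k}(w^k)\|$ uniformly bounded by continuity. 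Multiplying a bounded quantity by $\zeta^k\to 0$ forces $\|\widetilde w_i^k - w^k\|\to 0$ along $K$ for every $i$, and combined with $w^k\to\bar w$ this gives $\widetilde w_i^k\to\bar w$.

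For the direction \eqref{assert_B_0}, the key observation is that $\{h_1^k,\dots,h_P^k\}$ is a permutation of $\{1,\dots,P\}$, so $\nabla f(\bar w)=\sum_{i=1}^{P}\nabla f_{h_i^k}(\bar w)$ for every $k$ regardless of the order. Writing $d^k=-\sum_{i=1}^{P}\nabla f_{h_i^k}(\widetilde w_{i-1}^k)$ and subtracting term by term, Assumption \ref{ass:lipschtizagradient} gives
\[
\|d^k + \nabla f(\bar w)\| \le \sum_{i=1}^{P}\|\nabla f_{h_i^k}(\widetilde w_{i-1}^k)-\nabla f_{h_i^k}(\bar w)\| \le L\sum_{i=1}^{P}\|\widetilde w_{i-1}^k - \bar w\|,
\]
and the right-hand side vanishes along $K$ by \eqref{assert_A_0}.

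The main obstacle I anticipate is precisely the $k$--dependence of the permutation inside $d^k$: one cannot simply pass to the limit index by index, because the label $h_i^k$ attached to the $i$-th term keeps changing with $k$. The resolution is to re-expand $\nabla f(\bar w)$ along the \emph{same} permutation $h^k$ and then bound each of the $P$ differences uniformly by the single Lipschitz constant $L$; this is what lets the argument go through without any hypothesis on the sampling order, covering IG, Shuffle-Once and RR simultaneously. Everything else is routine bookkeeping, the only mild care being to note that a convergent subsequence lives in a compact set so that the component gradients are indeed uniformly bounded there.
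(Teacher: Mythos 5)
Your proof is correct, and it takes a genuinely different route from the paper's at the two places where work is actually needed. For \eqref{assert_A_0}, the paper runs a fresh induction on $i$ inside the proof, passing to the limit in the recursive bound of Lemma \ref{prop:Lipschitz} step by step; you instead invoke the preceding boundedness proposition (applicable since a convergent $\{\zeta^k\}$ is in particular bounded) to bound the whole bracket uniformly, and then let the prefactor $\zeta^k\to 0$ kill it in one stroke --- the induction is still there, but relocated into the already-proved proposition, which makes your argument shorter and in fact gives $\|\widetilde w_i^k - w^k\|\to 0$ along the \emph{entire} sequence, not just along $K$. For \eqref{assert_B_0}, the difference is more substantive: the paper handles the $k$-dependent permutation by a pigeonhole argument --- since there are only finitely many permutations of $\{1,\dots,P\}$, it extracts a further subsequence on which $I^k$ is constant, and then concludes by mere continuity of each $\nabla f_p$; you avoid any further extraction by re-expanding $\nabla f(\bar w)=\sum_{i=1}^P \nabla f_{h_i^k}(\bar w)$ along the \emph{same} permutation (valid because a permutation does not change the sum) and bounding each difference by the common Lipschitz constant, giving $\|d^k+\nabla f(\bar w)\|\le L\sum_{i=1}^P\|\widetilde w_{i-1}^k-\bar w\|\to 0$. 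The trade-off: the paper's route uses only continuity of the gradients at the final step (Lipschitzness enters only through the lemma) but produces a thinner subsequence; your route leans on the full strength of Assumption \ref{ass:lipschtizagradient} with its uniform constant $L$, but in exchange establishes the conclusion along \emph{any} subsequence with $w^k\to\bar w$, which is slightly stronger than the existence statement the proposition asks for. Both are complete; yours is arguably the cleaner argument, while the paper's emphasizes that the direction limit needs no quantitative Lipschitz information once the permutation is frozen.
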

\begin{proof}
Let us consider any limit point $\bar w$ of $\{w^k\}$. Then, recalling that $\lim_{k\to\infty}\zeta^k = 0$, we have that a subset of indices $\bar K$ exists such that
\begin{eqnarray*}
        && \lim_{k\to \infty,k\in\bar K} w^k=\Bar w,\\
        && \lim_{k\to \infty,k\in\bar K} \zeta^k=0.
\end{eqnarray*}
Now, for every iteration index $k$, let $I^k$ denote a permutation of the set of indices $\{1,\dots,P\}$. Since $P$ is  finite,  the number of such permutations is finite. Then, we can extract an infinite subset of indices $K\subseteq\bar K$ such that
\begin{eqnarray}
&& \lim_{k\to\infty, k\in K} w^k = \bar w\label{assert_C_1}\\
&& \lim_{k\to\infty, k\in K} \zeta^k = 0 \label{assert_D_1}\\
&& I_k = \bar I= \{\bar h_1,\dots,\bar h_P\},\ \forall\ k\in K\nonumber
\end{eqnarray}
Thus, (\ref{assert_C_1}) and (\ref{assert_D_1}) prove, respectively, (\ref{assert_C}) and (\ref{assert_D}).
From the definition of Algorithm \ref{alg:IG}, and applying Proposition \ref{prop:Lipschitz} we have that for $i=1,\dots,P$ and $k\in K$, we can write
\begin{eqnarray}\label{boundnorm_i_0}
\|\widetilde w^k_i -  w^k\|   \leq   \zeta^k\left(  
L\sum_{j=1}^{i}\|\widetilde w^k_{j-1} - w^k\| +\sum_{j=1}^{i}  \|\nabla f_{\bar h_j}(w^k)\|\right).
\end{eqnarray}

Now, to prove (\ref{assert_A_0}), we proceed by induction on $i$. Consider first $i=1$
so that 
$$\|\widetilde w^k_1 -  w^k\|   \leq    \zeta^k \|\nabla f_{\bar h_1}(w^k)\|$$
Taking the limit for $k\to\infty$, $k\in K$ 
recalling that $\{w^k\}$ is bounded and continuity of $\nabla f_{\bar h_1}$, we obtain
\[
\lim_{k\to\infty, k\in K}\|\widetilde w^k_1 - w^k\| = 0
\]
which means, recalling (\ref{assert_C_1}), that
\begin{equation}\label{parziale_1_0}
\lim_{k\to\infty, k\in K}\widetilde w^k_1 = \bar w.
\end{equation}
By induction on $i$,  let us assume that, for all  $j= 1,\dots,i-1$,  
\begin{equation}\label{assunzione_induz_0}
\lim_{k\to\infty,k\in K}\|\widetilde w^k_{j} - w^k\| = 0
\end{equation}
By taking the limit for $k\to\infty$, $k\in K$, in (\ref{boundnorm_i_0}), and recalling the induction assumption (\ref{assunzione_induz_0}), boundedness of $\{w^k\}$ and continuity of $\nabla f_{\bar h_j}$ for all $j$, we have that
\[
\lim_{k\to\infty,k\in  K}\|\widetilde w^k_{i} - w^k\| = 0
\]
which means, recalling (\ref{assert_C_1}), that
\begin{equation}\label{passo_induttivo}
\lim_{k\to\infty,k\in K}\widetilde w^k_{i} = \bar w.
\end{equation}
Then, (\ref{parziale_1_0}) and (\ref{passo_induttivo}) prove (\ref{assert_A_0}). 

In order to prove (\ref{assert_B_0}), let us recall the definition of $d^k$, for $k\in K$, i.e.
\[
d^k = -\sum_{i=1}^P\nabla f_{\bar h_i}(\widetilde w^k_{i-1}).
\]
Then, (\ref{assert_B_0}) follows by recalling (\ref{assert_A_0}) and the continuity of $\nabla f_{\bar h_i}$, for $i=1,\dots,P$.
\end{proof}

\begin{remark}
In Algorithm \ref{alg:innercycle} the \emph{for cycle} at Step 4 can be modified 
using in place of $P$ a value $P^k\le P$ for all $k$ and such that  $P^k=P$ for infinitely many iterations. Indeed,  under this condition we have that Proposition \ref{limtildew_general} still holds on the further subsequence $\{k:\ P^k=P\}$.
\end{remark}

Proposition \ref{limtildew_general} shows that, given an ``external'' converging sequence  $\{w^k\}$ and driving the stepsize $\zeta^k\to 0$, the ``attached'' sequences $\widetilde w_i^k$ for $k\in K$ remains in a neighborhood of $w^k$ and converge to the same limit point $\bar w$. 
Thus it proves that the direction $d^k$ used for updating the point $w^k$ tends in the limit to be the gradient itself. 
This implies that the error  $$\left\|\nabla f(w^k)- \sum_{i=1}^P\nabla f_{h_i}(\widetilde w^k_{i-1}) \right\|$$
is going to zero in the limit.

The error going to zero has been exploited to prove convergence of  deterministic IG and RR methods requiring strong assumptions on the objective functions as done in most papers \cite{Bertsekas2000,palagisecciaSOCO2021,gurbuzbalaban2017convergence,schmidt2017minimizing,Bottou10large-scalemachine}. 
Indeed, these strong assumptions are not satisfied in many important machine learning models.

In Proposition \ref{limtildew_general}, these assumptions on the objective function are replaced by a strong assumption on the sequence $\{w^k\}$, which has to remain bounded. {In the next sections, we show how to enforce this assumption by embedding Algorithm \ref{alg:innercycle} in a more specific framework producing the sequence $\{w^k\}$.
}

Another crucial aspect in methods using diminishing stepsize rule is how to drive the stepsize $\zeta^k\to 0$. Indeed many heuristic rules have been defined \cite{Goodfellow-et-al-2016}  to avoid the stepsize decreasing too fast and thus slowing down convergence.
{The algorithms that we define in the next sections also allow tackling this aspect in an automatic way by controlling the reduction of the stepsize. In particular, the price to pay for lightening the assumptions requires a limited additional computational effort by checking  periodically a sufficient reduction of the objective function. }

\section{Controlled mini-batch Algorithms} \label{sec:CMA}
In this section, we  introduce the idea behind Controlled mini-batch algorithms. We define two algorithm schemes, the monotone \CMA\ and the non-monotone \NMCMA, and for each of them we prove convergence properties under specific light assumptions. 
{Indeed, the aim of \CMA-based algorithms is to define schemes that require a low increased computational cost with respect to traditional RR or IG algorithms, and allow to control convergence without requiring strong assumptions on the objective function. In particular, neither convexity, }nor strong growth conditions on the gradients of each $f_p$ and on the full gradient, as in \cite{bertsekas2011incremental,palagisecciaSOCO2021,gurbuzbalaban2021random} are required, {but}
{we will use the only Assumptions \ref{ass:compact} (assumed always satisfied) and \ref{ass:lipschtizagradient} (recalled when needed). }

The basic idea behind  the Controlled mini-batch Algorithm schemes consists {in defining the ``external'' sequence  $\{w^k\}$ by} trying to accept as much as possible the points generated by the MG algorithm  as defined in \ref{alg:innercycle}, namely 
setting $w^{k+1}={ \widetilde w_P^k}$ for \emph{as many iterations $k$ as possible}. The acceptance of the point ${ \widetilde w_P^k}$ is controlled  by means of both a watchdog approach and, in extreme cases, a derivative-free linesearch procedure involving a few evaluations of just the objective function. 

{In particular, the outer iterations $w^k$ 
}
{is defined as 
\begin{equation}\label{eq:updategeneral}
w^{k+1}=w^k+\alpha^k d^k
\end{equation}
}

where  
$\displaystyle d^k = -\displaystyle\sum_{i=1}^P\nabla f_{h_i^k}(\widetilde w_{i-1}^k)$ 
is the direction obtained in the \texttt{Inner\_Cycle}  
and the stepsize can take the values

$$\alpha^k=\begin{cases}
          \zeta^k & \text{the trial point $\widetilde w_P^k$ is accepted  } \\
          0 & \text{restart from the last point $w^k$} \\
          >\zeta^k   & \text{take a longer step along $d^k$} \\
        \end{cases}$$

In Figure \ref{fig:scheme} a picture of  the outer iterations in the MG gradient method (\texttt{Inner cycle}) and in the Controlled Mini Batch scheme is reported.

We highlight how the outer iteration in Controlled mini-batch Algorithms  allows an extrapolation step 
along $d^k$ but also the possibility of having a restart, thus discarding completely the iterations done in the last epoch. This kind of control allows proving convergence under very mild assumptions. The computational experience reported in section \ref{sec:CMA_numerical} shows that the restart step appears very seldom and does not affect the overall performance.
Quite recently, in \cite{malinovsky2022server}, a result on  the convergence of a modification of the RR method which considers a similar {extrapolation}  
step along the direction $d^k$ has been provided. The framework presented there is more general, as it considers federated learning with $M$ nodes. When $M=1$, it is a RR method for problem \eqref{eq: problem}. They proved that the extra step improves on RR in a very clear and clean manner. However, the main difference between their and our approach relies on the hypothesis needed to prove convergence: we do not need to know to any extent the Lipschitz constant, so that $\zeta^k $ is adjusted in an automatic way. Moreover, we also include the possibility of restarting the iteration.

\begin{figure}
    \centering     
        \subfigure[MG]
            {\label{fig:MG}
            \includegraphics[width=0.48\textwidth]{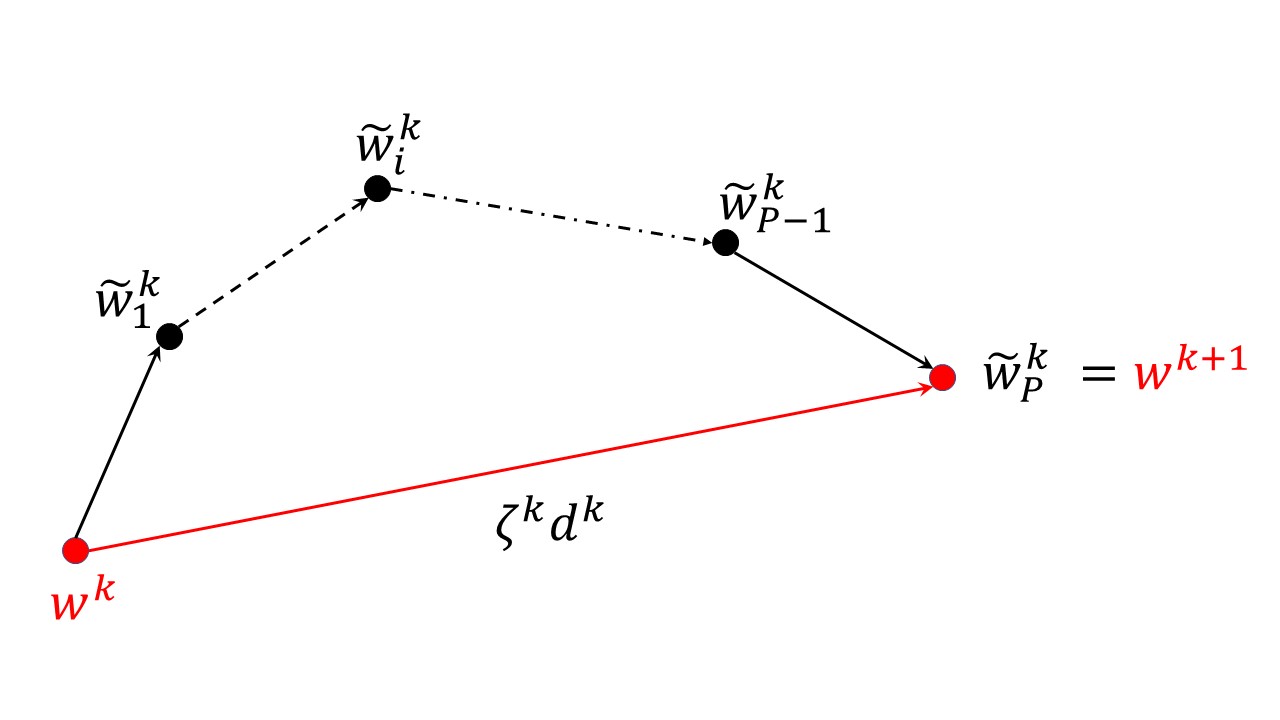}}
        \subfigure[CMA]
            {\label{fig:CMA}
            \includegraphics[width=0.48\textwidth]{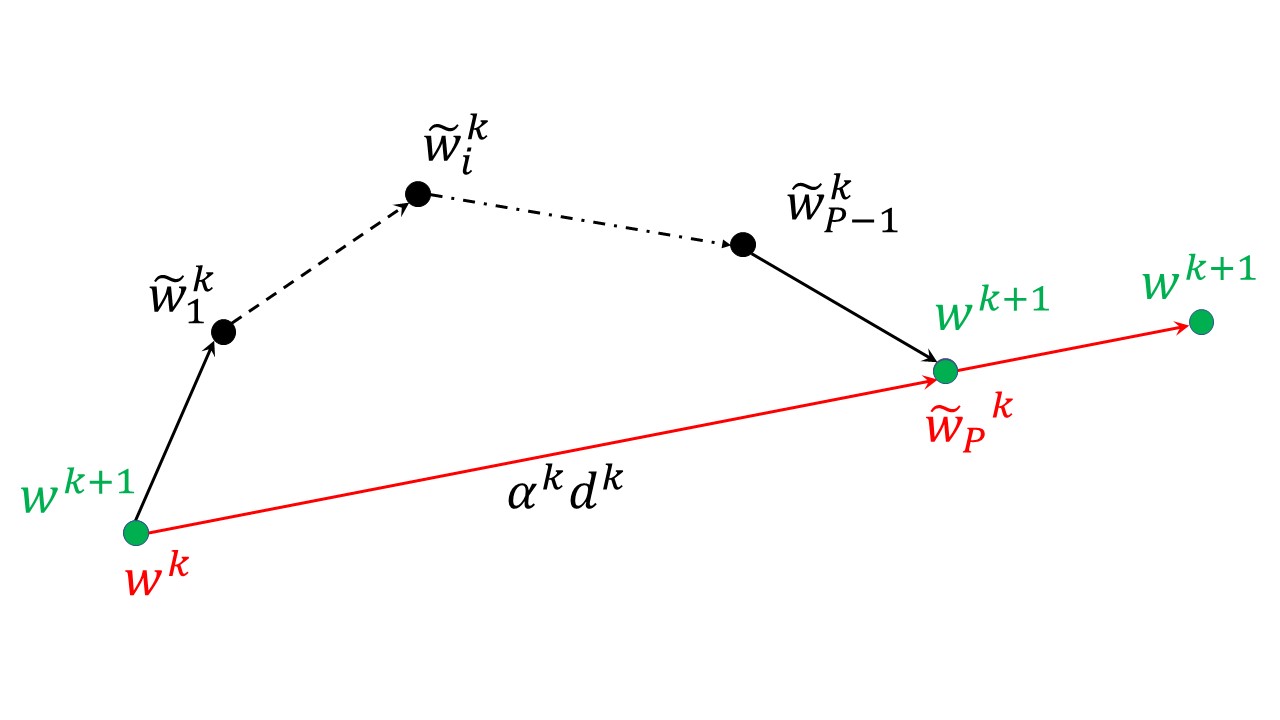}}
    \caption{a) MG gradient outer iteration; b) Controlled mini batch outer iteration: $w^{k+1}$ is selected as on one of the green points.}
    \label{fig:scheme}
\end{figure}

Further,  we also have the chance to control the stepsize $\zeta^k$ used in the \texttt{Inner\_Cycle}.
 Algorithm \ref{alg:innercycle} constitutes the  
inner iterations which defines trial points $\widetilde w^k$ and search directions $d^k$.
 Eventually, the stepsize $\zeta^k$ used in in the \texttt{Inner\_Cycle} is driven to zero. 
However, the decreasing rule depends on watchdog rule on the iteration, and it may happen that $\zeta^{k+1}=\zeta^k$. This avoid the inner stepsize to decrease too fast and slowing down the practical convergence.

We call the method ``Controlled mini-batch Algorithm'' since at each {``outer''} iteration the watchdog approach is leveraged to apply mini-batch iterations as much as possible, while reductions of the objective function over the iterations and of  the stepsize $\zeta^k$ are checked and in case {enforced via a DFLS procedure.}

In the following, two different versions of Controlled mini-batch Algorithms  are introduced that use either a monotone or non-monotone linesearch procedures thus forcing either a monotone or nonmononotone decreasing sequence $\{f(w^k)\}$. Both the two DFLS procedures use as starting stepsize the value of $\zeta^k$ and envisage the possibility of extrapolating, namely of taking larger steps along the direction $d^k$. For the sake of simplicity, we will refer to these two methods as \CMA\ and \NMCMA\ respectively. 

The two methods {differ not only in the DFLS but also in the acceptance rules of the trial points and in the reduction of the stepsize. From a theoretical point of view we cannot prove the superiority of one w.r.t. the other, while, from a practical perspective, they can} explore different regions of the searching space and can achieve different numerical performances.

\subsection{Monotone Controlled mini-batch Algorithm}

In this section, we present the  Controlled mini-batch Algorithm (\CMA) which is  a \emph{monotonically} decreasing method. 
Following the general description, a trial point $\widetilde w^k$ and a direction $d^k$ are obtained by applying Algorithm \ref{alg:innercycle}  with a constant stepsize $\zeta^k$.

\CMA\  checks 
if a sufficient reduction in the trial point  is satisfied, namely
$f(\widetilde w^k) - f(w^k) \leq - \gamma\zeta^k$. In this case,
the trial point is accepted, and the stepsize $\zeta^k$ is left unchanged. 

Otherwise, \CMA\  checks whether the direction $d^k$ can be considered as a good estimate of a descent direction or if  we are close enough to convergence. 
In these cases, the inner stepsize $\zeta^k$ can be reduced, and  the direction can possibly be used
within an Extrapolation Derivative-Free Linesearch (EDFL). When used, EDFL computes the outer stepsize $\alpha^k$  used to update the iteration. 
It checks first for a sufficient monotonic reduction using a derivative-free condition
$f(\widetilde w^k)-f(w^k)\le -\gamma\zeta^k\|d^k\|^2$, and 
when satisfied, it employs an extrapolation procedure for taking longer steps than $\zeta^k$ along the direction.
The scheme of the Linesearch procedure EDFL is provided in Algorithm \ref{alg:EDFL2} \cite{lucidi2002global}.

\begin{algorithm}[htb]
\caption{Extrapolation Derivative-Free Linesearch (EDFL)}
\label{alg:EDFL2}
\begin{algorithmic}[1]
\State Input $(w^k,d^k,\zeta^k;\gamma,\delta)$: $w^k\in \R^n,d^k\in \R^n,\zeta^k>0$, $\gamma \in (0,1), \delta\in(0,1)$
\State Set $\alpha=\zeta^k$
\If {$f(w^k +\alpha d^k)> {f(w^k)}-\gamma\alpha\|d^k\|^2$}
    \label{edfl:step4}    \State Set $\alpha^k = 0$ and \textbf{return} 
\EndIf
\While {$f(w^k +(\alpha/\delta) d^k)\leq \min\{f(w^k)-\gamma(\alpha/\delta)\|d^k\|^2, f(w^k+\alpha d^k)\}$}
     \label{edfl:step7}   \State Set $\alpha=\alpha/\delta$
\EndWhile
\State Set $\alpha^k = \alpha$ and \textbf{return} 
\State Output: $\alpha^k$
 \end{algorithmic}
 \end{algorithm}

The scheme of the \CMA\ is provided in Algorithm \ref{alg:CMA2}.
We note that the stepsize returned by the EDFL is called $\widetilde \alpha^k$ and it is used together with $\zeta^k$ and $d^k$ to check several different conditions on the sufficient reduction of the objective function. The final stepsize $\alpha^k$ is set to zero only when all the conditions are not met, and the produced point would be outside the Level set ${\cal L}(w^0)$. In this way we try to allow acceptance of the trial point as much as possible, thus perturbing the classical RR or IG iteration as few times as possible.
Also,
\CMA\ allows a further degree of freedom in choosing 
$w^{k+1}$ w.r.t. to \eqref{eq:updategeneral} that can actually be set to any point such that 
    $f(w^{k+1})\leq f( w^k + \alpha^k d^k)$

 \begin{algorithm}[htb]
\begin{algorithmic}[1]
    \State Set $ \zeta^0 >0, \theta\in(0,1), \tau > 0$, $\gamma\in(0,1), \delta\in(0,1)$
    \State Let $w^0\in \R^n,k=0$
    \For {$k=0,1,2\dots $}
        \State Compute $(\widetilde w^k, d^k)$ = \texttt{Inner\_Cycle}($w^k,\zeta^k$)
   \If{{ $f(\widetilde w^k) \leq f(w^k) - \gamma\zeta^k$}} \label{cma:step10} 
    \State {Set $\zeta^{k+1} = \zeta^k$}  {and $\alpha^k = \zeta^k$} \label{cma2:step11}
    \Else
    \If{$\|d^k\| \leq \tau\zeta^k$}
        \State \label{cma2:step9} Set $\zeta^{k+1}=\theta \zeta^k$
        and {$\alpha^k=\begin{cases}
         \zeta^k & \text{if } f(\widetilde w^k)\le f(w^0)\\
        0 & \text{otherwise}\\
        \end{cases}$}
        
    \Else
         \State $\widetilde\alpha^k =EDFL(w^k,d^k,\zeta^k;\gamma,\delta)$ 
         \If {$\widetilde\alpha^k\|d^k\|^2 \leq\tau\zeta^k$}
            \State \label{step19_CMA2}Set $\zeta^{k+1}=\theta \zeta^k$  and {$\alpha^k=\begin{cases} 
          \widetilde\alpha^k & \text{if } \widetilde\alpha^k > 0\\
          \zeta^k & \text{if } \widetilde\alpha^k=0 \text{ and }f(\widetilde w^k)\le f(w^0)\\
          0 & \text{otherwise}\\
        \end{cases}$}
                   
         \Else
            \State Set $\zeta^{k+1}=\zeta^k,$ \label{step18_CMA2}
            and $\alpha^k = \widetilde\alpha^k$
         \EndIf
    \EndIf
    \EndIf
    \State Set $y^k = w^k + \alpha^k d^k$ 
    \State Select $ w^{k+1}$ such that $f(w^{k+1})\leq f( y^k)$ \label{step20_CMA2}
   \EndFor
\end{algorithmic}
\caption{{Controlled mini-batch Algorithm (\CMA)}}
\label{alg:CMA2}
\end{algorithm}

\subsubsection{\CMA\ Convergence analysis}\label{sec:CMA2_conv}

We first recall that the Linesearch procedure in Algorithm \ref{alg:EDFL2}
 is well defined; namely it terminates in a finite number of iterations. 
\begin{lemma}\label{EDFL:welldefined}
     Algorithm \ref{alg:EDFL2} is well defined, i.e. it determines in a finite number of steps a scalar $\alpha^k$ such that 
\begin{equation}\label{EDFLS:suffred}
f(w^k+\alpha^kd^k) \leq f(w^k) -\gamma\alpha^k\|d^k\|^2.
\end{equation}
\end{lemma}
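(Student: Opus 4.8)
The claim is that Algorithm \ref{alg:EDFL2} (EDFL) terminates finitely and returns $\alpha^k$ satisfying the sufficient-decrease inequality \eqref{EDFLS:suffred}. The plan is to split into the two exit branches of the procedure and verify each separately.

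First I would dispose of the trivial branch: if the test at Step \ref{edfl:step4} succeeds, i.e. $f(w^k+\alpha d^k) > f(w^k) - \gamma\alpha\|d^k\|^2$ with $\alpha=\zeta^k$, then the algorithm immediately sets $\alpha^k=0$ and returns. With $\alpha^k=0$, inequality \eqref{EDFLS:suffred} reads $f(w^k)\le f(w^k)$, which holds trivially. So the only substance lies in the extrapolation branch.

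**The extrapolation branch.**

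Suppose instead the initial test fails, so $f(w^k+\zeta^k d^k)\le f(w^k)-\gamma\zeta^k\|d^k\|^2$; the procedure then enters the while-loop at Step \ref{edfl:step7}, repeatedly multiplying $\alpha$ by $1/\delta>1$ as long as the loop condition
\[
f\!\left(w^k+(\alpha/\delta)d^k\right)\le \min\{f(w^k)-\gamma(\alpha/\delta)\|d^k\|^2,\ f(w^k+\alpha d^k)\}
\]
holds. I would argue termination by contradiction: if the loop never exited, the sequence $\alpha/\delta,\ \alpha/\delta^2,\dots$ would grow without bound (since $1/\delta>1$), yet each passed test forces $f(w^k+(\alpha/\delta)d^k)\le f(w^k)-\gamma(\alpha/\delta)\|d^k\|^2\to-\infty$. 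This contradicts the fact that $f$ is bounded below on the relevant region — here I would invoke coercivity (Assumption \ref{ass:compact}), which guarantees $f$ is bounded below, so the objective cannot diverge to $-\infty$. (If $d^k=0$ the direction term vanishes and the initial test at Step \ref{edfl:step4} cannot fail, so in the extrapolation branch $\|d^k\|>0$ and the decrease is genuine.) Hence the loop exits after finitely many steps with a final $\alpha^k=\alpha$. When the loop exits, the value of $\alpha$ at that point was accepted at the previous iteration (or is $\zeta^k$ if the loop body never executed), so it satisfies $f(w^k+\alpha d^k)\le f(w^k)-\gamma\alpha\|d^k\|^2$, which is exactly \eqref{EDFLS:suffred}.

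**Anticipated obstacle.**

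The main subtlety is the bookkeeping in the while-loop: the accepted $\alpha$ is the \emph{last} value for which the sufficient-decrease-and-improvement condition held, and one must carefully track that whichever $\alpha$ is returned carries the decrease property $f(w^k+\alpha d^k)\le f(w^k)-\gamma\alpha\|d^k\|^2$ as an invariant. I would phrase this as a loop invariant: at the top of each while iteration, the current $\alpha$ already satisfies the decrease bound (true initially because the Step \ref{edfl:step4} test failed, and preserved because the loop condition's first component enforces it for $\alpha/\delta$ before the update). The termination argument itself is routine once coercivity is in hand; the only care needed is confirming that the returned $\alpha$ inherits the invariant rather than the rejected candidate $\alpha/\delta$.
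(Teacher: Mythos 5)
Your proof takes essentially the same route as the paper's: the $\alpha^k=0$ branch is dismissed trivially, termination of the while loop is shown by contradiction (an infinite cycle would force $f(w^k+(\zeta^k/\delta^j)d^k)\le f(w^k)-\gamma(\zeta^k/\delta^j)\|d^k\|^2$ for all $j$, impossible since coercivity, i.e.\ Assumption \ref{ass:compact} together with continuity of $f$, bounds $f$ below), and the returned $\alpha$ inherits the sufficient decrease either from the failed test at Step \ref{edfl:step4} (when $\alpha^k=\zeta^k$) or from the last accepted pass of the while condition at Step \ref{edfl:step7} (when $\alpha^k>\zeta^k$). Your loop-invariant phrasing is just a cleaner packaging of the paper's case split $\alpha^k=\zeta^k$ versus $\alpha^k>\zeta^k$, and your appeal to boundedness below is interchangeable with the paper's appeal to compactness of ${\cal L}(w^0)$.

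The one genuine slip is your parenthetical handling of $d^k=0$, whose logic is backwards. You claim that if $d^k=0$ the test at Step \ref{edfl:step4} must trigger, so the extrapolation branch sees $\|d^k\|>0$. In fact, with $d^k=0$ the test reads $f(w^k)>f(w^k)$, which is \emph{false}: the procedure does not return $\alpha^k=0$ but instead enters the while loop, whose condition becomes $f(w^k)\le\min\{f(w^k),f(w^k)\}$ and holds at every pass, so the loop never terminates. Thus EDFL is well defined only when $d^k\neq 0$, and this is guaranteed not inside the linesearch but by the caller: \CMA\ invokes EDFL only after checking $\|d^k\|>\tau\zeta^k>0$. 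The paper's proof leaves this caveat entirely implicit (its contradiction also silently uses $\|d^k\|\neq 0$), so your main argument is no weaker than the paper's; but since you chose to address the edge case explicitly, the justification you gave for it is incorrect and should be replaced by the observation about the calling context.
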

\begin{proof}
First of all, we prove that Algorithm \ref{alg:EDFL2} is well-defined. In particular, we show that the while loop cannot infinitely cycle.  Indeed, if this was the case, that would imply
\begin{equation*}
    f\left(w^k +\frac{\zeta^k}{\delta^j} d^k\right) \leq f(w^k) -\gamma\left(\frac{\zeta^k}{\delta^j}\right)\|d^k\|^2  \quad \forall j=1,2,\dots
\end{equation*}
so that for $j\to \infty$ we would have $(\zeta^k/\delta^j)\to \infty$ which contradicts the compactness of ${\cal L}(w^0)$ and continuity of $f$.

{In order to prove (\ref{EDFLS:suffred}), we consider separately the two cases i) $\alpha^k = 0$ and ii) $\alpha^k > 0$. In the first case, (\ref{EDFLS:suffred}) is trivially satisfied.

In case ii),  the condition at step 4  implies
\[
  f(w^k+\zeta^kd^k) \leq f(w^k) -\gamma\zeta^k\|d^k\|^2.
\]
Then, if $\alpha^k = \zeta^k$, the condition is satisfied. Otherwise, if $\alpha^k > \zeta^k$, the stopping condition of the while loop, along with the fact that we already proved that the while loop could not infinitely cycle, is such that we have
\[
   f(w^k+\alpha^k d^k) \leq f(w^k) - \gamma\alpha^k\|d^k\|^2
\]
which again proves (\ref{EDFLS:suffred}), thus concluding the proof.
}
\end{proof}

We now prove that the sequence generated by \CMA\ remains in the level  set ${\cal L}(w^0)$ which is compact by Assumption \ref{ass:compact}, thus satisfying the assumption required in Proposition \ref{limtildew_general}.

\begin{lemma}
Let $\{w^k\}$ be the sequence of points produced by algorithm \CMA. Then, $\{w^k\}\subseteq{\cal L}(w^0)$.
\end{lemma}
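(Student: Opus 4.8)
The plan is to proceed by induction on $k$, establishing the equivalent claim that $f(w^k)\le f(w^0)$ for every $k$. The base case $k=0$ is trivial. For the inductive step I would assume $f(w^k)\le f(w^0)$ and prove $f(w^{k+1})\le f(w^0)$. Since Step \ref{step20_CMA2} selects $w^{k+1}$ so that $f(w^{k+1})\le f(y^k)=f(w^k+\alpha^k d^k)$, it suffices to show $f(w^k+\alpha^k d^k)\le f(w^0)$ for whichever value of $\alpha^k$ the algorithm assigns at iteration $k$. The crucial preliminary observation I would record is the identity $\widetilde w^k=w^k+\zeta^k d^k$, which follows at once from the output of \texttt{Inner\_Cycle} together with \eqref{eq:basicIG2}, since $\widetilde w^k=\widetilde w^k_P=w^k-\zeta^k\sum_{j=1}^P\nabla f_{h^k_j}(\widetilde w^k_{j-1})=w^k+\zeta^k d^k$. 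The consequence I would exploit is that whenever the algorithm sets $\alpha^k=\zeta^k$, the updated point coincides with the trial point, i.e. $y^k=\widetilde w^k$, so that $f(y^k)=f(\widetilde w^k)$.

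With this identity in hand, the inductive step reduces to inspecting the branches of Algorithm \ref{alg:CMA2}, every one of which falls into one of three patterns. First, if $\alpha^k=\zeta^k$ (Steps \ref{cma2:step11}, \ref{cma2:step9}, and the relevant sub-case of Step \ref{step19_CMA2}), then the assignment is guarded by $f(\widetilde w^k)\le f(w^0)$: either explicitly, or, in Step \ref{cma2:step11}, because $f(\widetilde w^k)\le f(w^k)-\gamma\zeta^k\le f(w^k)\le f(w^0)$ by the inductive hypothesis; using $y^k=\widetilde w^k$ gives $f(y^k)=f(\widetilde w^k)\le f(w^0)$. Second, if $\alpha^k=\widetilde\alpha^k$ with $\widetilde\alpha^k>0$ (Step \ref{step18_CMA2} and the corresponding sub-case of Step \ref{step19_CMA2}), then the sufficient-decrease property \eqref{EDFLS:suffred} established in Lemma \ref{EDFL:welldefined} yields $f(y^k)=f(w^k+\widetilde\alpha^k d^k)\le f(w^k)-\gamma\widetilde\alpha^k\|d^k\|^2\le f(w^k)\le f(w^0)$. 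Third, if $\alpha^k=0$ (the restart case), then $y^k=w^k$ and $f(y^k)=f(w^k)\le f(w^0)$ directly. In all three patterns $f(y^k)\le f(w^0)$, hence $f(w^{k+1})\le f(y^k)\le f(w^0)$, which closes the induction.

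The only step requiring real care is the bookkeeping over the branching structure of Algorithm \ref{alg:CMA2}: one must check that every branch in which the full step $\alpha^k=\zeta^k$ is taken is indeed protected by the test $f(\widetilde w^k)\le f(w^0)$, precisely because on those branches the watchdog condition of Step \ref{cma:step10} has failed and no descent of the trial point is otherwise guaranteed. The EDFL branches are handled automatically by \eqref{EDFLS:suffred}, and the restart branch is immediate, so I do not anticipate any analytical difficulty beyond a complete enumeration of the cases.
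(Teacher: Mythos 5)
Your proof is correct and takes essentially the same route as the paper: the same branch-by-branch enumeration of Algorithm \ref{alg:CMA2} (using the identity $\widetilde w^k = w^k + \zeta^k d^k$ and the sufficient decrease \eqref{EDFLS:suffred} from Lemma \ref{EDFL:welldefined}) to show that every branch yields either $f(y^k)\le f(w^0)$ or $f(y^k)\le f(w^k)$. The only cosmetic difference is the final glue: you close by induction on $k$, whereas the paper splits the iteration indices into the set where $f(w^k)\le f(w^0)$ and the set where $f(w^k)\le f(w^{k-1})$ and chains backwards to the last index of the first set --- logically equivalent arguments.
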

\begin{proof}
By definition of algorithm \CMA\ and recalling Lemma \ref{EDFL:welldefined},  for every iteration $k$, we have that
 $f(w^{k+1})\leq f( y^k)$ (Step \ref{step20_CMA2}). Furthermore, exactly one of the following cases happens
\begin{itemize}
    \item[i)] Step \ref{cma2:step11} is executed, i.e. $f(y^k) \leq f(w^k) -\gamma\zeta^k$;
    \item[ii)] Step \ref{step18_CMA2} is executed, i.e. $f(y^k) \leq f(w^k) -\gamma\alpha^k\|d^k\|^2$;
    \item[iii)] Step \ref{cma2:step9} is executed, then, we have that 
    \begin{itemize}
        \item $f(y^k) \leq f(w^0)$ or
        \item $f(y^k)= f(w^k)$;
    \end{itemize}
    \item[iv)] Step \ref{step19_CMA2} is executed, then, we have that
    \begin{itemize}
        \item $f(y^k) = f(w^k)$ or
        \item $f(y^k) = f(\widetilde w^k) \leq f(w^0)$ or
        \item $f(y^k) = f(w^k+\alpha^k d^k) \leq f(w^k+\zeta^k d^k) =f(\widetilde w^k)\leq f(w^k) - \gamma\zeta^k\|d^k\|^2$.
    \end{itemize}
\end{itemize}
Then, for every $k$, we have in particular that either $f(w^{k+1})\leq f(w^0)$ or $f(w^{k+1})\leq f(w^k)$. Now, let us split the iteration sequence into two subsets, namely $K$ and $\bar K$, such that
\[
\begin{split}
   K & = \{k\in\mathbb{N}:k\geq 0, f(w^{k}) \leq f(w^0)\},\\ 
   \bar K & = \{k\in\mathbb{N}:k > 0, f(w^{k})\leq f(w^{k-1})\}.
\end{split}
\]
Note that, in particular, $0\in K$, i.e. $K\neq\emptyset$. It obviously results that 
\begin{equation}\label{subseqinlevel1}
\{w^{k}\}_K\subseteq{\cal L}(w^0).
\end{equation}
On the other hand, for every $k\in\bar K$, let $m_k$ be the biggest index such that $m_k < k$ and $m_k\in K$. Since $K\neq\emptyset$, $m_k$ is always well-defined. Then, by definition of $K$, $\bar K$ and $m_k$, we can write
\[
   f(w^k) \leq f(w^{k-1}) \leq \dots \leq f(w^{m_k})\leq f(w^0)
\]
Hence, when $k\in\bar K$ we still have that $f(w^k)\leq f(w^0)$, so that 
\begin{equation}\label{subseqinlevel2}
\{w^k\}_{\bar K}\subseteq{\cal L}(w^0).
\end{equation}
Then, the proof is concluded recalling (\ref{subseqinlevel1}) and (\ref{subseqinlevel2}).

\end{proof}

\noindent Now we show that the inner stepsize $\zeta^k$ goes to zero as $k\to \infty$.
\begin{proposition}\label{th:CMA2_a_0}
Let $\{\zeta^k\}$ be the sequence of steps produced by Algorithm \CMA, then
\[
 \lim_{k\to\infty}\zeta^k = 0.
\]
\end{proposition}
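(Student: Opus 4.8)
The plan is to argue by contradiction, exploiting the fact that along the iterations $\zeta^k$ is only ever left unchanged or multiplied by $\theta\in(0,1)$. First I would observe, by inspecting the four mutually exclusive branches of Algorithm \ref{alg:CMA2}, that $\zeta^{k+1}\in\{\zeta^k,\theta\zeta^k\}$; hence $\{\zeta^k\}$ is a monotonically non-increasing sequence of positive numbers, and therefore admits a limit $\bar\zeta\ge 0$. The whole point is then to rule out $\bar\zeta>0$.

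Suppose, for the sake of contradiction, that $\bar\zeta>0$. Since every reduction strictly scales the stepsize by $\theta<1$, the two branches that reduce the stepsize (Step \ref{cma2:step9} and Step \ref{step19_CMA2}) can be executed only finitely many times; were they executed infinitely often we would have $\zeta^k\to 0$. Consequently there exists an index $\bar k$ such that $\zeta^k=\bar\zeta$ for all $k\ge\bar k$, and for all such $k$ only the two ``non-reducing'' branches, namely Step \ref{cma2:step11} and Step \ref{step18_CMA2}, can be taken.

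The central step is to extract a uniform sufficient decrease of $f$ along $\{w^k\}$ for $k\ge\bar k$. If Step \ref{cma2:step11} is taken, then $\alpha^k=\zeta^k$ and, recalling that by \eqref{eq:basicIG2} we have $\widetilde w^k=\widetilde w_P^k=w^k+\zeta^k d^k=y^k$, the acceptance test gives $f(y^k)=f(\widetilde w^k)\le f(w^k)-\gamma\bar\zeta$. If instead Step \ref{step18_CMA2} is taken, then $\alpha^k=\widetilde\alpha^k$ with the branch condition $\widetilde\alpha^k\|d^k\|^2>\tau\zeta^k=\tau\bar\zeta$, and Lemma \ref{EDFL:welldefined} applied to the EDFL output yields $f(y^k)=f(w^k+\widetilde\alpha^k d^k)\le f(w^k)-\gamma\widetilde\alpha^k\|d^k\|^2< f(w^k)-\gamma\tau\bar\zeta$. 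Combining the two cases with Step \ref{step20_CMA2} (which ensures $f(w^{k+1})\le f(y^k)$), I obtain, for every $k\ge\bar k$,
\[
f(w^{k+1})\le f(w^k)-c,\qquad c:=\gamma\bar\zeta\min\{1,\tau\}>0.
\]

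Iterating this inequality forces $f(w^k)\to-\infty$, which contradicts the fact that, by the preceding lemma, $\{w^k\}\subseteq{\cal L}(w^0)$, a compact set by Assumption \ref{ass:compact}, on which the continuous function $f$ is bounded below. The contradiction establishes $\bar\zeta=0$, i.e. $\lim_{k\to\infty}\zeta^k=0$. The main obstacle I anticipate is the bookkeeping in the Step \ref{step18_CMA2} case: one must correctly invoke Lemma \ref{EDFL:welldefined} to pass from the branch condition on $\widetilde\alpha^k\|d^k\|^2$ to a genuine decrease of $f$, and one must verify that the remaining two branches indeed reduce $\zeta^k$, so that they cannot occur for $k\ge\bar k$.
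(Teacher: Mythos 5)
Your proof is correct, and while it shares the paper's skeleton --- $\{\zeta^k\}$ is non-increasing with reduction ratio $\theta$, so if $\bar\zeta>0$ the reducing branches occur only finitely often and $\zeta^k=\bar\zeta$ for all $k\ge\bar k$ --- it closes the contradiction by a genuinely more direct mechanism. The paper splits the tail iterations into the subsequences where Step \ref{cma2:step11} and Step \ref{step18_CMA2} are executed and argues asymptotically in each case: on the first it derives $\gamma\bar\zeta\le f(w^k)-f(w^{k+1})$ infinitely often and contradicts convergence of $\{f(w^k)\}$; on the second it shows $\alpha^k\|d^k\|^2\to 0$, so that for large $k$ the test $\widetilde\alpha^k\|d^k\|^2\le\tau\zeta^k$ would fire and the algorithm would reduce $\zeta^k$ via Step \ref{step19_CMA2}, contradicting stabilization. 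You instead read a \emph{uniform} sufficient decrease directly off the branch conditions: the acceptance test gives $f(y^k)\le f(w^k)-\gamma\bar\zeta$, while in the Step \ref{step18_CMA2} branch the very condition $\widetilde\alpha^k\|d^k\|^2>\tau\zeta^k=\tau\bar\zeta$, combined with Lemma \ref{EDFL:welldefined}, gives $f(y^k)<f(w^k)-\gamma\tau\bar\zeta$ (and $\widetilde\alpha^k>0$ is automatic there, since $\widetilde\alpha^k=0$ would route to Step \ref{step19_CMA2}); hence $f(w^{k+1})\le f(w^k)-\gamma\bar\zeta\min\{1,\tau\}$ for every $k\ge\bar k$, forcing $f(w^k)\to-\infty$ and contradicting boundedness below of $f$ on the compact ${\cal L}(w^0)$ (Assumption \ref{ass:compact}). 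Your route is shorter, unifies the two cases, and dispenses with the subsequence bookkeeping; the paper's asymptotic route has the advantage of being the one that survives essentially unchanged in the nonmonotone setting (Proposition \ref{th:NMCMA_a_0}), where acceptance is measured against the reference value $R^k$ rather than $f(w^k)$, so no telescoping constant-decrease argument of your kind is available there.
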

\begin{proof}
In every iteration, either $\zeta^{k+1} = \zeta^k$ or $\zeta^{k+1} = \theta\zeta^k < \zeta^k$. Therefore, the sequence $\{\zeta^k\}$ is monotonically non-increasing. Hence, it results
\[
\lim_{k\to\infty} \zeta^k = \bar\zeta \geq 0.
\]
Let us suppose, by contradiction, that $\bar\zeta > 0$. If this were the case, there should be an iteration index $\bar k\geq 0$ such that, for all $k\geq\bar k$, $\zeta^{k+1} = \zeta^k = \bar\zeta$. Namely, for all iterations $k\geq \bar k$, step \ref{cma2:step11} or \ref{step18_CMA2} are always executed. Then, for all $k\geq\bar k$ we have $f(w^{k+1})\leq f(w^k)$.

Let us now denote by
\begin{eqnarray*}
  K^\prime & = & \{k:\ k\geq\bar k,\ \mbox{and step \ref{cma2:step11} is executed}\}\\
  K^{\prime\prime} & = & \{k:\ k\geq\bar k,\ \mbox{and step \ref{step18_CMA2} is executed}\}  
\end{eqnarray*}

{
Let us first suppose that $K^\prime$ is infinite. Then, for $k\in K^\prime$ and $k\geq \bar k$, we would have that infinitely many times
\[
   \gamma\bar\zeta \leq f(w^k) - f(w^{k+1}).
\]
Then, taking the limit for $k\to\infty$ and $k\in K^\prime$, we get a contradiction with the compactness of the level set and the continuity of the function $f$.}

\par\smallskip

On the other hand, let us suppose that $K^{\prime\prime}$ is infinite. Then, thanks to Lemma \ref{EDFL:welldefined}, we would have that, for $k\in K^{\prime\prime}$,
\[
 f(w^{k+1}) \leq f(w^k+\alpha^kd^k) \leq {f(w^k)} -\gamma\alpha^k\|d^k\|^2.
\]
By taking the limit in the above relation and recalling the compactness of the level set ${\cal L}(w^0)$ and continuity of $f$, we have that $f(w^k)\to \bar f\in \R, k\to \infty$ and we would obtain
\[
\lim_{k\to\infty} \alpha^k\|d^k\|^2 = 0.
\]
But then, for $k\in K^{\prime\prime}$ and sufficiently large, it would happen that 
\[
 \alpha^k\|d^k\|^2 \leq \tau\bar\zeta
\]
which means that Algorithm \CMA\ would execute step \ref{cma2:step9} setting $\zeta^{k+1} = \theta\bar\zeta$ thus decreasing $\zeta^{k+1}$ below $\bar\zeta$. This contradicts our initial assumption and concludes the proof. 
\end{proof}

We are now ready to provide the main convergence result for \CMA.

\begin{proposition}
Let Assumption \ref{ass:lipschtizagradient} hold and let $\{w^k\}$ be the sequence of points produced by Algorithm \CMA. Then, $\{w^k\}$ admits limit points and (at least) one of them is stationary.
\end{proposition}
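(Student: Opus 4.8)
The plan is to feed the two global facts already in hand---compactness of the level set and $\zeta^k\to0$---into the limiting behaviour of the inner direction $d^k$, and then to read off stationarity from whichever rule forces the stepsize down. First I would settle the existence claim: the preceding lemma gives $\{w^k\}\subseteq{\cal L}(w^0)$, which is compact by Assumption \ref{ass:compact}, so $\{w^k\}$ is bounded and admits limit points. Moreover Proposition \ref{th:CMA2_a_0} gives $\lim_{k\to\infty}\zeta^k=0$, hence $\{w^k\}$ and $\{\zeta^k\}$ satisfy the hypotheses of Proposition \ref{limtildew_general}: along suitable subsequences the inner iterates collapse onto any limit point and $d^k\to-\nabla f(\cdot)$. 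It then remains to exhibit one limit point $\bar w$ with $\nabla f(\bar w)=0$.

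Next I would localize the analysis on the iterations that actually shrink $\zeta^k$. The stepsize is multiplied by $\theta\in(0,1)$ only at Step \ref{cma2:step9} and Step \ref{step19_CMA2}, and is left unchanged otherwise; since $\zeta^k$ decreases from $\zeta^0>0$ to $0$, the index set $K_{\mathrm{red}}$ of such iterations is infinite. Using compactness I extract a subsequence of $K_{\mathrm{red}}$ along which $w^k\to\bar w$ and---repeating the extraction in the proof of Proposition \ref{limtildew_general} starting from this subsequence (which only needs boundedness, $\zeta^k\to0$ and the finiteness of the permutations)---I obtain a set $K\subseteq K_{\mathrm{red}}$ along which $d^k\to-\nabla f(\bar w)$. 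Refining once more I may assume the reduction on $K$ is always triggered by the same step. If it is Step \ref{cma2:step9}, then $\|d^k\|\le\tau\zeta^k\to0$ forces $\nabla f(\bar w)=0$ at once.

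The real work is the branch where Step \ref{step19_CMA2} fires on $K$, so that $\|d^k\|>\tau\zeta^k$ while $\widetilde\alpha^k\|d^k\|^2\le\tau\zeta^k$. I would argue by contradiction, assuming $\nabla f(\bar w)\neq0$: then $\|d^k\|$ is bounded away from $0$, whence $\widetilde\alpha^k\to0$, and (using also $\zeta^k\to0$ with $\|d^k\|$ bounded) the displacements $\zeta^k d^k$, $\widetilde\alpha^k d^k$ and $(\widetilde\alpha^k/\delta)d^k$ all vanish, so every intermediate point converges to $\bar w$. If the EDFL returned $\widetilde\alpha^k=0$, the failed test at step \ref{edfl:step4} reads $f(w^k+\zeta^k d^k)>f(w^k)-\gamma\zeta^k\|d^k\|^2$; dividing by $\zeta^k$, the mean value theorem and passage to the limit give $-\|\nabla f(\bar w)\|^2\ge-\gamma\|\nabla f(\bar w)\|^2$ with $\gamma\in(0,1)$, i.e. $\nabla f(\bar w)=0$. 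If $\widetilde\alpha^k>0$, I exploit that the while-loop stopped at $\alpha=\widetilde\alpha^k$, so its test fails at $\beta^k=\widetilde\alpha^k/\delta$; since $x>\min\{A,B\}$ means $x>A$ or $x>B$, either the Armijo-type inequality is violated at $\beta^k$ (handled exactly as above by the mean value theorem) or $f(w^k+\beta^k d^k)>f(w^k+\widetilde\alpha^k d^k)$, in which case the mean value theorem on $[\widetilde\alpha^k,\beta^k]$ produces $\nabla f(w^k+\eta^k d^k)^\top d^k>0$ that passes to $-\|\nabla f(\bar w)\|^2\ge0$ in the limit. Either way $\nabla f(\bar w)=0$, contradicting the assumption, so $\nabla f(\bar w)=0$ as desired.

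I expect the extrapolation branch (Step \ref{step19_CMA2} with $\widetilde\alpha^k>0$) to be the main obstacle: one must first establish that the \emph{full} extrapolated displacement $\beta^k d^k\to0$---which is precisely where the contradiction hypothesis $\|d^k\|$ bounded below is genuinely used---and then dispatch both alternatives hidden inside the $\min$ defining the EDFL stopping rule, via a mean value theorem applied on the right interval in each case. Lemma \ref{EDFL:welldefined} (finiteness of the linesearch) together with the level-set control keeps every trial point inside the compact set ${\cal L}(w^0)$, so that all the limits invoked above are legitimate.
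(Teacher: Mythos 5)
Your proof is correct and follows essentially the same route as the paper's: limit points from $\{w^k\}\subseteq{\cal L}(w^0)$, restriction to the infinite set of stepsize-reduction iterations, Proposition \ref{limtildew_general} to get $d^k\to-\nabla f(\bar w)$, the split between the $\|d^k\|\le\tau\zeta^k$ branch and the $\widetilde\alpha^k\|d^k\|^2\le\tau\zeta^k$ branch, and the mean-value-theorem contradiction in the sub-cases $\widetilde\alpha^k=0$ and $\widetilde\alpha^k>0$. If anything, your treatment of the EDFL stopping rule is slightly more careful than the paper's: since the while-loop test fails via $f(w^k+(\widetilde\alpha^k/\delta)d^k)>\min\{f(w^k)-\gamma(\widetilde\alpha^k/\delta)\|d^k\|^2,\ f(w^k+\widetilde\alpha^k d^k)\}$, one must handle two alternatives, and you explicitly dispatch the second one ($f(w^k+(\widetilde\alpha^k/\delta)d^k)>f(w^k+\widetilde\alpha^k d^k)$, giving $-\|\nabla f(\bar w)\|^2\ge 0$ via the mean value theorem on $[\widetilde\alpha^k,\widetilde\alpha^k/\delta]$), whereas the paper's proof invokes only the Armijo-type branch.
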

\begin{proof}
By the compactness of ${\cal L}(w^0)$ and considering that $w^k\in{\cal L}(w^0)$ for all $k$, we know that $\{w^k\}$ admits limit points.

Now, let us introduce the following set of iteration indices
\[
 K = \{k:\ \zeta^{k+1} = \theta\zeta^k\}
\]
and note that, since $\lim_{k\to\infty}\zeta^k = 0$, $K$ must be infinite.  Let $\bar w$ be any limit point of the subsequence $\{w^k\}_K$, then, by Proposition \ref{limtildew_general}, a subset $\bar K\subseteq K$ exists such that
\begin{subequations}\label{teo:cmaconv1}
\begin{eqnarray}
&& \label{subeqa}\lim_{k\to\infty, k\in \bar K} w^k = \bar w,\\
&& \label{subeqb}\lim_{k\to\infty, k\in \bar K} d^k = -\nabla f(\bar w),\\
&& \label{subeqc}\lim_{k\to\infty, k\in \bar K} \zeta^k = 0
\end{eqnarray}
\end{subequations}
Then, let us now split the set $\bar K$ into two further subsets, namely,
\begin{eqnarray*}
 K_1 & = & \{k\in \bar K:\ \|d^k\| \leq \tau\zeta^k\},\\
 K_2 & = & \{k\in \bar K\setminus K_1: \widetilde\alpha^k\|d^k\|^2 \leq \tau\zeta^k\}.
\end{eqnarray*}
Note that, $K_1$ and $K_2$ cannot be both finite.

First, let us suppose that $K_1$ is infinite. Then, the definition of $K_1$, (\ref{subeqb}) and (\ref{subeqc}) imply that
\[
 \lim_{k\to\infty,k\in K_1} \|d^k\| = \|\nabla f(\bar w)\| = 0
\] 
thus concluding the proof in this case.
\par\smallskip

Now, let us suppose that $K_2$ is infinite. In this case, we proceed by contradiction and assume that $\bar w$ is not stationary, i.e. $\|\nabla f(\bar w)\| > 0$. By the definition of $K_2$ and (\ref{subeqc}), we obtain
\[
\lim_{k\to\infty,k\in K_2} \widetilde\alpha^k\|d^k\|^2 = 0,
\]
which, recalling (\ref{subeqb}) and the fact that $\|\nabla f(\bar w)\| > 0$, yields
\begin{equation}\label{eq:alfatildezero}
\lim_{k\to\infty,k\in K_2} \widetilde\alpha^k = 0.
\end{equation}
Now, let us further partition $K_2$ into two subsets, namely
\begin{itemize}
    \item[i)] $\bar K_2 = \{k\in K_2:\ \widetilde\alpha^k = 0\}$
    \item[ii)] $\widehat K_2 = \{k\in K_2:\ \widetilde\alpha^k > 0\}$.
\end{itemize}

Let us first suppose that $\bar K_2$ is infinite. This means that, by the definition of algorithm EDFL, 
\[
 \frac{f(w^k) - f(w^k+\zeta^kd^k)}{\zeta^k} < \gamma \|d^k\|^2.
\]
Then, by the Mean-Value Theorem, a number $\hat\alpha^k\in (0,\zeta^k)$ exists such that
\[
 -\nabla f(w^k + \hat\alpha^kd^k)^Td^k < \gamma \|d^k\|^2.
\]
Taking the limit in the above relation, considering (\ref{subeqa}), (\ref{subeqb}) and (\ref{subeqc}), we obtain
\[
 \|\nabla f(\bar w)\|^2 \leq \gamma \|\nabla f(\bar w)\|^2
\]
which, recalling that $\gamma \in (0,1)$, gives
\[
\|\nabla f(\bar w)\| = 0,
\]
which contradicts $\|\nabla f(\bar w)\|>0$.
Now, let us suppose that $\widehat K_2$ is infinite. This means that 
\[
 \frac{f(w^k) - f(w^k+(\widetilde\alpha^k/\delta)d^k)}{\widetilde\alpha^k/\delta} < \gamma \|d^k\|^2.
\]
Then, reasoning as in the previous case, and considering (\ref{eq:alfatildezero}), we again can conclude that $\|\nabla f(\bar w)\| = 0$, which is again a contradiction and concludes the proof.
\end{proof}

\subsection{Nonmonotone Controlled mini-batch Algorithm}
In this section, we propose a nonmonotone version \NMCMA\  of the Controlled mini-batch Algorithm.

This  algorithm is motivated by relaxing the acceptance criteria so to accept even more frequently the trial point $\widetilde w^k$ thus possibly decreasing the need for function evaluations and increasing the computational efficiency.

In particular, we introduce a nonmonotone acceptance criterion both in Step \ref{cma:step10} of \CMA\ and in steps \ref{edfl:step4} and \ref{edfl:step7} of Algorithm \ref{alg:EDFL2}. 
Following classical approaches in nonmonotone scheme \cite{grippo1986nonmonotone,grippo2007}, in these conditions we substitute  the value $f(w^k)$ with a reference value $R^k$ which does not exceed the value of the objective function in the last $M$ iterations, being $M$ the nonmonotone memory.

Following \cite{grippo2007}, we introduce the Nonmonotone Derivative Free Linesearch procedure NMEDFL in Algorithm \ref{alg:NMEDFL2}. We note that the use of the reference value $R^k$ is not the only difference w.r.t. the monotone version. Indeed, Steps \ref{NMEDFL:STEP4} and  \ref{NMEDFL:STEP7} require a parabolic reduction condition on the stepsize $\alpha$ instead of a linear one.

\begin{algorithm}[htb]
\caption{Nonmonotone Extrapolation DF Linesearch (NMEDFL)}
\label{alg:NMEDFL2}
\begin{algorithmic}[1]
    \State Input $({R^k},w^k,d^k,\zeta^k;\gamma,\delta)$: $w^k\in \R^n,d^k\in \R^n, R^k\in \R, \zeta^k>0$;  $\gamma\in(0,1), \delta\in(0,1)$
    \State Set $\alpha=\zeta^k$
    \If{$f(w^k +\alpha d^k)> {R^k}-\gamma{(\alpha)^2}\|d^k\|^2$}
      \label{NMEDFL:STEP4}  \State Set $\alpha^k = 0$ and \textbf{return} 
    \EndIf
    \While{$f(w^k +(\alpha/\delta) d^k)\leq \label{NMEDFL:STEP7}\min\{R^k-\gamma{(\alpha/\delta)^2}\|d^k\|^2, f(w^k+\alpha d^k)\}$}
        \State Set $\alpha=\alpha/\delta$
    \EndWhile
    \State Set $\alpha^k = \alpha$ and \textbf{return}
    \State Output:  $\alpha^k$
 \end{algorithmic}
 \end{algorithm}

The scheme of  the overall nonmonotone algorithm \NMCMA\ is provided in Algorithm \ref{alg:NMCMA}. We note that Step \ref{nmcma:step6}  of \NMCMA\  relaxes the monotonic requirement of Step \ref{cma:step10}  of \CMA\  but strengthens  the sufficient reduction condition, being $\max\{\zeta^k,\zeta^k\|d^k\|\}\ge \zeta^k$.
 Furthermore, we have no more the additional freedom of selecting $w^{k+1}$ such that $f(w^{k+1})\le f(w^k + \alpha^k d^k) $. This is due to the need of controlling the distance $\|w^{k+1}-w^k\|$ among iterates. Thus, it is not possible to state that \NMCMA\ is computationally less expensive than \CMA\ .

\begin{algorithm}[htb]
\begin{algorithmic}[1]
    \State Set $ \zeta^0 >0, \theta\in(0,1), \tau > 0, M\in \N$, $\gamma\in(0,1), \delta\in(0,1)$      

    \State Let $w^0\in \R^n,k=0$
    \For {$k=0,1,2\dots $}
        \State Set $\displaystyle R^k = \max_{0\leq j\leq\min\{k,M\}}\{f(w^{k-j})\}$.
    \State Compute $(\widetilde w^k, d^k)$ = \texttt{Inner\_Cycle}($w^k,\zeta^k$)
    \If{$f(\widetilde w^k) \leq {R^k} - \gamma{\max\{}\zeta^k{,\zeta^k\|d^k\|\}}$} 
    \label{nmcma:step6}\State  ${\alpha^k=\zeta^k}, \ \zeta^{k+1} = \zeta^k$\label{nmcma:step11} 
    \Else
     \If{$\|d^k\| \leq \tau\zeta^k$}
        \State $\zeta^{k+1}=\theta \zeta^k$ , $\alpha^k = 0$ \label{step10_NMCMA}
    \Else
         \State \label{nmcma:step23} $\alpha^k =NMEDFL(R^k,d^k,w^k,\zeta^k;\gamma,\delta)$  
         \If {${(\alpha^k)^2}\|d^k\|^2 \leq\tau\zeta^k$}
            \State Set $\zeta^{k+1}=\theta \zeta^k$  \label{step19_NMCMA}
         \Else
            \State Set $\zeta^{k+1}=\zeta^k$ \label{step18_NMCMA}
         \EndIf
    \EndIf
    \EndIf
    
    \State { $w^{k+1} =  w^k + \alpha^k d^k$}
   \EndFor
\end{algorithmic}
\caption{Nonmonotone Controlled mini-batch Algorithm (\NMCMA)}
\label{alg:NMCMA}
\end{algorithm}

\subsubsection{\NMCMA\ Convergence analysis}\label{sec:NMCMA_conv}
We first recall that the Linesearch procedure in Algorithm \ref{alg:NMCMA} is well defined, namely it terminates in a finite number of iterations. 
\begin{lemma}\label{lemma:NMEDFL:welldefined}
     Algorithm \ref{alg:NMCMA} is well defined, {
     i.e. it determines in a finite number of steps a scalar $\alpha^k$ satisfying 
\begin{equation}\label{NDFLS2:suffred}
f(w^k+\alpha^kd^k) \leq R^k -\gamma(\alpha^k)^2\|d^k\|^2,
\end{equation}
where $R^k$ is a reference value such that
\[
f(w^k) \le R^k \le \max_{0\leq j\leq\min\{k,M\}}\{f(w^{k-j})\}.
\]                            
 }
\end{lemma}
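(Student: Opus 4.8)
The plan is to mirror the structure of the proof of Lemma~\ref{EDFL:welldefined} for the monotone linesearch, adapting it to the two features that distinguish Algorithm~\ref{alg:NMEDFL2} from Algorithm~\ref{alg:EDFL2}: the reference value $R^k$ in place of $f(w^k)$, and the parabolic term $\gamma(\alpha)^2\|d^k\|^2$ in place of the linear one. The argument splits into two independent parts: first, showing that the \textbf{while} loop of NMEDFL cannot cycle indefinitely, so the procedure returns some $\alpha^k$ in finitely many steps; second, verifying that the returned $\alpha^k$ satisfies the sufficient-reduction inequality \eqref{NDFLS2:suffred}. Throughout I will use that NMEDFL is invoked (Step~\ref{nmcma:step23} of Algorithm~\ref{alg:NMCMA}) only in the branch where $\|d^k\|>\tau\zeta^k>0$, so that $\|d^k\|>0$, and that $R^k$ is a fixed finite number, being a maximum over finitely many past objective values with $R^k\ge f(w^k)$.

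For finite termination I would argue by contradiction. If the \textbf{while} loop never exited, then at every pass $\alpha$ is multiplied by $1/\delta$, so after $j$ iterations $\alpha=\zeta^k/\delta^{j}\to\infty$, and the loop condition would force
\[
  f\!\left(w^k+\frac{\zeta^k}{\delta^{j}}\,d^k\right)\le R^k-\gamma\left(\frac{\zeta^k}{\delta^{j}}\right)^{2}\|d^k\|^2
  \qquad\text{for all } j.
\]
Since $\|d^k\|>0$ and $R^k$ is finite, the right-hand side tends to $-\infty$ as $j\to\infty$, which forces $f$ to take values diverging to $-\infty$. This contradicts the fact that $f$ is bounded below, a consequence of coercivity (Assumption~\ref{ass:compact}) together with continuity. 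Here is where the parabolic term actually simplifies matters relative to the monotone case: because the quadratic term alone drives the bound to $-\infty$, I can invoke boundedness from below directly, and I do not need the iterates to remain in a fixed compact level set. This is convenient, since the nonmonotone reference $R^k$ need not be dominated by $f(w^0)$ and the level-set containment of the $\{w^k\}$ sequence would otherwise have to be established beforehand.

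For the reduction inequality \eqref{NDFLS2:suffred} I would distinguish the two outcomes of NMEDFL. If $\alpha^k=0$ (Step~\ref{NMEDFL:STEP4} triggers), then $w^k+\alpha^k d^k=w^k$ and $\gamma(\alpha^k)^2\|d^k\|^2=0$, so \eqref{NDFLS2:suffred} reduces to $f(w^k)\le R^k$, which holds by the assumed lower bound $R^k\ge f(w^k)$. If $\alpha^k>0$, then either the test at Step~\ref{NMEDFL:STEP4} failed, giving $f(w^k+\zeta^k d^k)\le R^k-\gamma(\zeta^k)^2\|d^k\|^2$ with $\alpha^k=\zeta^k$, or the loop executed at least once, in which case the last accepted update certifies $f(w^k+\alpha^k d^k)\le R^k-\gamma(\alpha^k)^2\|d^k\|^2$ for the returned value $\alpha^k$; in both situations \eqref{NDFLS2:suffred} is exactly the verified condition. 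The only genuinely delicate point is the termination argument of the previous paragraph; once that is in place, the reduction inequality follows immediately by reading off the exit conditions of the procedure, and the chain $f(w^k)\le R^k\le\max_{0\le j\le\min\{k,M\}}f(w^{k-j})$ is immediate from the definition of $R^k$ at Step~4 of Algorithm~\ref{alg:NMCMA}.
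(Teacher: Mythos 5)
Your proposal is correct and structurally the same as the paper's proof: the identical contradiction argument rules out infinite cycling of the \textbf{while} loop (otherwise $f\bigl(w^k+(\zeta^k/\delta^j)d^k\bigr)\le R^k-\gamma(\zeta^k/\delta^j)^2\|d^k\|^2$ for all $j$), and the identical three-way reading of the exit conditions ($\alpha^k=0$ trivially via $R^k\ge f(w^k)$; $\alpha^k=\zeta^k$ via the failed test at step 4; $\alpha^k>\zeta^k$ via the last satisfied while-condition) yields \eqref{NDFLS2:suffred}. The one place you genuinely diverge is the source of the contradiction in the termination step: the paper asserts that $(\zeta^k/\delta^j)\to\infty$ contradicts ``the compactness of ${\cal L}_0$ and continuity of $f$'', whereas you observe that, since $\|d^k\|>0$ (NMEDFL is invoked only in the branch of Algorithm \ref{alg:NMCMA} where $\|d^k\|>\tau\zeta^k>0$), the quadratic term alone drives the right-hand side to $-\infty$, contradicting boundedness below of $f$, which follows from Assumption \ref{ass:compact} and continuity. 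Your variant is arguably tidier in this nonmonotone setting: the paper's appeal to the level set implicitly presupposes that the trial points remain in a fixed compact set, which would require knowing something like $R^k\le f(w^0)$ --- a containment for the nonmonotone sequence that is only established afterwards in Proposition \ref{propgrippo2007_1}, whose proof rests (via Lemma \ref{lemma:nonmondecr}) on this very lemma --- while boundedness below needs no such bootstrapping. Your explicit remark that $\|d^k\|>0$ is also a point the paper leaves implicit, and it is genuinely needed: if $d^k=0$ the while-condition holds for every $\alpha$ and the loop would never terminate. In short: correct, same decomposition, with a small but real improvement at the single delicate step.
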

\begin{proof}
Algorithm NMEDFL cannot cycle indefinitely within the while cycle because that would imply

\begin{equation*}
    f\left(w^k +\frac{\zeta^k}{\delta^j} d^k\right) \leq R^k
    -\gamma\left(\frac{\zeta^k}{\delta^j}\right)^2\|d^k\|^2  \quad \forall j=1,2,\dots
\end{equation*}
so that for $j\to \infty$ we would have $(\zeta^k/\delta^j)\to \infty$ which contradicts the compactness of ${\cal L}_0$ and continuity of $f$.

{In order to prove (\ref{NDFLS2:suffred}), we consider separately the two cases i) $\alpha^k = 0$ and ii) $\alpha^k > 0$. In the first case, (\ref{NDFLS2:suffred}) is trivially satisfied given the definition of $R^k$.

In case ii), since the if condition at step 4 is not satisfied, we have that 
\[
  f(w^k+\zeta^kd^k) \leq R^k -\gamma(\zeta^k)^2\|d^k\|^2.
\]
Then, if the final $\alpha^k$ is equal to $\zeta^k$, the condition is satisfied. Otherwise, if $\alpha^k > \zeta^k$, the conditions of the while loop are such that we have

\[
   f(w^k+\alpha^kd^k) \leq R^k - \gamma(\alpha^k)^2\|d^k\|^2
\]
thus the proof is completed.
}
\end{proof}

{
\begin{lemma}\label{lemma:nonmondecr}
Let $\{w^k\}$ and $\{R^k\}$ be the sequences produced by algorithm \NMCMA. Then, for every iteration $k$ we have
\[
  f(w^{k+1}) \leq R^k -\sigma(\|w^{k+1}-w^k\|).
\]
where $\sigma:\Re^+\to\Re^+$ is a forcing function, i.e. a function such that, for any sequence $\{t^k\}\subseteq \Re^+$, 
$\lim_{k\to\infty}\sigma(t^k) = 0$ implies $\lim_{k\to\infty}t^k = 0$. 
\end{lemma}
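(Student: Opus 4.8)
The plan is a case analysis over the three branches of Algorithm~\NMCMA, bounding in each the decrease of $f$ in terms of the actual step length $\|w^{k+1}-w^k\|$, and then fusing the three bounds by a single explicitly constructed forcing function. The crucial preliminary identity is that \texttt{Inner\_Cycle} returns $\widetilde w^k=w^k+\zeta^k d^k$ and that \NMCMA\ always sets $w^{k+1}=w^k+\alpha^k d^k$, so that $\|w^{k+1}-w^k\|=\alpha^k\|d^k\|$ holds exactly; this is what lets me rewrite every per-branch estimate purely as a function of $\|w^{k+1}-w^k\|$.

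First I would handle the acceptance branch (Step~\ref{nmcma:step11}), where $\alpha^k=\zeta^k$, hence $w^{k+1}=\widetilde w^k$ and $\|w^{k+1}-w^k\|=\zeta^k\|d^k\|$. The acceptance test gives $f(w^{k+1})\le R^k-\gamma\max\{\zeta^k,\zeta^k\|d^k\|\}$, and since $\max\{\zeta^k,\zeta^k\|d^k\|\}\ge\zeta^k\|d^k\|=\|w^{k+1}-w^k\|$, this yields the linear estimate $f(w^{k+1})\le R^k-\gamma\|w^{k+1}-w^k\|$. Next, in the linesearch branch (Steps~\ref{step19_NMCMA} and~\ref{step18_NMCMA}) I invoke Lemma~\ref{lemma:NMEDFL:welldefined} to obtain $f(w^{k+1})\le R^k-\gamma(\alpha^k)^2\|d^k\|^2=R^k-\gamma\|w^{k+1}-w^k\|^2$, a quadratic estimate. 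Finally, in the restart branch (Step~\ref{step10_NMCMA}, and likewise whenever NMEDFL returns $\alpha^k=0$) we have $w^{k+1}=w^k$, so $\|w^{k+1}-w^k\|=0$ while $f(w^{k+1})=f(w^k)\le R^k$ by the definition $R^k\ge f(w^k)$.

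To unify these, I would take $\sigma(t)=\gamma\min\{t,t^2\}$, which is nonnegative, continuous, vanishes only at $t=0$ and is strictly increasing, hence a forcing function (if $\sigma(t^k)\to0$ but $t^k\not\to0$ along some subsequence bounded below by $\varepsilon>0$, then $\sigma(t^k)\ge\gamma\min\{\varepsilon,\varepsilon^2\}>0$, a contradiction). Because $\gamma t\ge\sigma(t)$ and $\gamma t^2\ge\sigma(t)$ for all $t\ge0$, the linear bound of the acceptance branch, the quadratic bound of the linesearch branch and the value $\sigma(0)=0$ of the restart branch each dominate $\sigma(\|w^{k+1}-w^k\|)$, giving the claim $f(w^{k+1})\le R^k-\sigma(\|w^{k+1}-w^k\|)$ uniformly.

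The only point requiring care is the acceptance branch, where the decrease is governed by $\max\{\zeta^k,\zeta^k\|d^k\|\}$ rather than directly by $\|w^{k+1}-w^k\|=\zeta^k\|d^k\|$; one must verify that replacing the maximum by $\zeta^k\|d^k\|$ only weakens the inequality, which holds precisely because the maximum dominates $\zeta^k\|d^k\|$. Using the $\min\{t,t^2\}$ construction, rather than a single power $t^p$, is what avoids having to assume a priori boundedness of the step lengths; everything else is a direct reading of the algorithm together with Lemma~\ref{lemma:NMEDFL:welldefined}.
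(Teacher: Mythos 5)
Your proof is correct and follows essentially the same route as the paper's: a branch-by-branch case analysis yielding a linear bound $R^k-\gamma\|w^{k+1}-w^k\|$ on acceptance, a quadratic bound $R^k-\gamma\|w^{k+1}-w^k\|^2$ via Lemma~\ref{lemma:NMEDFL:welldefined} on the linesearch branch, fused by the same forcing function $\sigma(t)=\gamma\min\{t,t^2\}$. If anything, you are slightly more careful than the paper, which treats only the two steps \ref{nmcma:step11} and \ref{nmcma:step23} explicitly, whereas you also spell out the restart branch (Step~\ref{step10_NMCMA} and the case $\alpha^k=0$ from NMEDFL), where $w^{k+1}=w^k$ and the bound holds trivially since $f(w^k)\le R^k$ and $\sigma(0)=0$.
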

\begin{proof}
We note that, by definition of algorithm \NMCMA, at every iteration, 
\begin{itemize}
    \item[i)] either Step \ref{nmcma:step11} is executed, in which case we have that
\[
   f(w^{k+1}) = f(w^k+\zeta^kd^k) \leq R^k -\gamma \zeta^k\|d^k\| = R^k - \gamma \|w^{k+1}-w^k\|
\]
\item[ii)] or Step \ref{nmcma:step23} is executed in which case, recalling Lemma  \ref{lemma:NMEDFL:welldefined}, we have 
\[
   f(w^{k+1}) = f(w^k + \alpha^kd^k)\leq R^k -\gamma (\alpha^k)^2\|d^k\|^2 = R^k -\gamma\|w^{k+1}-w^k\|^2.
\]
\end{itemize}
Then, we have 
\[
f(w^{k+1}) \leq R^k - \gamma\min\{\|w^{k+1}-w^k\|,\|w^{k+1}-w^k\|^2\},
\]
which concludes the proof.
\end{proof}

{We now prove that the sequence produced by Algorithm \ref{alg:NMCMA} remains in the compact level set ${\cal L}(w^0)$ and that the distance among successive iterates is driven to zero.
}

\begin{proposition}\label{propgrippo2007_1}
Let $\{w^k\}$ be the sequence of points produced by algorithm \NMCMA. 

Then,
\begin{itemize}
    \item [i)] $w^k\in{\cal L}(w^0)$ for all $k$;
    \item [ii)] $\{R^k\}$ and  $\{f(w^k)\}$ are both convergent and have the same limit $R^*$;
    \item[iii)] $\lim_{k\to\infty}\|w^{k+1}-w^k\| = 0$;

\end{itemize}

\end{proposition}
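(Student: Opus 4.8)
The plan is to follow the classical nonmonotone analysis of Grippo--Lampariello--Lucidi, using Lemma~\ref{lemma:nonmondecr} as the engine, and to prove the three claims in the order i), ii), iii).

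First, for part i) I would establish that the reference sequence $\{R^k\}$ is monotonically non-increasing. Since Lemma~\ref{lemma:nonmondecr} gives $f(w^{k+1})\le R^k-\sigma(\|w^{k+1}-w^k\|)\le R^k$, the new value $f(w^{k+1})$ never exceeds $R^k$. Writing $R^{k+1}$ as the maximum of $f(w^{k+1})$ together with (a subset of) the values already appearing in $R^k$, a short case distinction on whether $k+1\le M$ or $k+1>M$ shows $R^{k+1}\le R^k$. As $R^0=f(w^0)$, this yields $f(w^k)\le R^k\le R^0=f(w^0)$ for all $k$, i.e. $\{w^k\}\subseteq{\cal L}(w^0)$, which is compact by Assumption~\ref{ass:compact}.

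For part ii), boundedness from below of $\{R^k\}$ follows since every $R^k$ equals $f$ evaluated at an iterate in the compact set ${\cal L}(w^0)$, on which $f$ attains a minimum; combined with monotonicity this gives $R^k\to R^*$ for some $R^*\in\R$. The harder task is to show $f(w^k)\to R^*$. I would introduce the index $\ell(k)$ with $k-\min\{k,M\}\le\ell(k)\le k$ and $f(w^{\ell(k)})=R^k$, and prove by induction on $j\ge 1$ that $\|w^{\ell(k)-j+1}-w^{\ell(k)-j}\|\to 0$ and $f(w^{\ell(k)-j})\to R^*$. The base case applies Lemma~\ref{lemma:nonmondecr} at index $\ell(k)-1$, namely $R^k=f(w^{\ell(k)})\le R^{\ell(k)-1}-\sigma(\|w^{\ell(k)}-w^{\ell(k)-1}\|)$; since both reference values tend to $R^*$ (note $\ell(k)\ge k-M\to\infty$), the forcing property of $\sigma$ drives the displacement to zero, and uniform continuity of $f$ on ${\cal L}(w^0)$ propagates the limit to the function values. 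The inductive step repeats this with Lemma~\ref{lemma:nonmondecr} at $\ell(k)-j-1$. Finally, writing $\ell(k)=k+r$ with $0\le r\le M$, I would bound $\|w^{\ell(k)}-w^k\|$ by the sum of at most $M$ consecutive displacements, each of which vanishes by the induction, obtaining $\|w^{\ell(k)}-w^k\|\to 0$ and hence $f(w^k)\to R^*$ again by uniform continuity.

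Part iii) is then immediate: applying Lemma~\ref{lemma:nonmondecr} once more gives $\sigma(\|w^{k+1}-w^k\|)\le R^k-f(w^{k+1})$, and since $R^k\to R^*$ and $f(w^{k+1})\to R^*$ by part ii), the right-hand side vanishes, so the forcing property yields $\|w^{k+1}-w^k\|\to 0$. The main obstacle is the induction in part ii): it is delicate because the index $\ell(k)$ moves with $k$, so one must argue along the shifted subsequences $\{\ell(k)-j\}$ and exploit both the boundedness $\ell(k)-k\le M$ of the memory window and the uniform continuity of $f$ on the compact level set in order to pass cleanly from the vanishing of the displacements to the convergence of the function values.
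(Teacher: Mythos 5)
Your proof is correct and follows essentially the same route as the paper: the paper merely verifies the hypotheses of \cite[Lemma 1]{grippo2007} (via Lemma \ref{lemma:nonmondecr}, compactness of ${\cal L}(w^0)$, Lipschitz continuity and boundedness below) and cites it, and your inline argument --- monotone non-increasing $\{R^k\}$, induction on $j$ along the indices $\ell(k)-j$ using the forcing function and (uniform) continuity on the compact level set, then the telescoping bound over the memory window --- is precisely the proof of that lemma as reproduced in the paper's Appendix (Lemma \ref{lemma1[19]}). The only slips are cosmetic: since $\ell(k)\le k$ you should write $\ell(k)=k-r$ and $k-\ell(k)\le M$ (not $\ell(k)=k+r$, $\ell(k)-k\le M$), and to make the final decomposition literally match the induction family you should, as the Appendix does, work with the shifted index $\widehat\ell(k)=\ell(k+M+1)\ge k+1$, so that each of the at most $M+1$ consecutive displacements between $w^k$ and $w^{\widehat\ell(k)}$ is of the form $w^{\widehat\ell(k)-j+1}-w^{\widehat\ell(k)-j}$ with $j$ ranging over a bounded set.
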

\begin{proof}
By Assumption \ref{ass:lipschtizagradient} the function $f$ is Lipschitz continuous and by Assumption \ref{ass:compact}, $f$ is bounded below. 
Recalling Lemma \ref{lemma:nonmondecr}, the fact that $f$ is continuously differentiable and that ${\cal L}(w^0)$ is compact, we have that all the assumptions of Lemma 1 in \cite{grippo2007} are satisfied. For the sake of completeness, the statement and proof of this Lemma are reported in 
Lemma \ref{lemma1[19]} in the Appendix). Hence, the proof exactly follows from \cite[Lemma 1]{grippo2007} except that we have still to prove that
\[
\lim_{k\to\infty}R^k = \lim_{k\to\infty}f(w^k).
\]
This is proved by noting that $\{f(w^k)\}$
converges to a limit, and by recalling the definition of $R^k$, i.e. 
$ R^k = \max_{0\leq j\leq\min\{k,M\}}\{f(w^{k-j})\}$.

\end{proof}

}

\noindent We now show that the inner stepsize $\zeta^k$ goes to zero as $k\to \infty$.

\begin{proposition}\label{th:NMCMA_a_0}
Let $\{\zeta^k\}$ be the sequence of steps produced by Algorithm \NMCMA, then
\[
 \lim_{k\to\infty}\zeta^k = 0.
\]
\end{proposition}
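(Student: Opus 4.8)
The plan is to mirror the argument used for the monotone case in Proposition \ref{th:CMA2_a_0}, adapting it to exploit the properties of the reference values $R^k$ established in Proposition \ref{propgrippo2007_1}. First I would observe that at every iteration the update rule of \NMCMA\ either leaves the stepsize unchanged (Steps \ref{nmcma:step11} and \ref{step18_NMCMA}) or contracts it by the factor $\theta\in(0,1)$ (Steps \ref{step10_NMCMA} and \ref{step19_NMCMA}). Hence $\{\zeta^k\}$ is monotonically non-increasing and bounded below by zero, so it converges to some $\bar\zeta\ge 0$, and the whole statement reduces to ruling out $\bar\zeta>0$, which I would do by contradiction.

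Assuming $\bar\zeta>0$, there is an index $\bar k$ beyond which the stepsize is never contracted, so for all $k\ge\bar k$ only Step \ref{nmcma:step11} or Step \ref{step18_NMCMA} is executed and $\zeta^k=\bar\zeta$. I would then split the tail indices into the two sets $K'=\{k\ge\bar k:\ \mbox{Step \ref{nmcma:step11} is executed}\}$ and $K''=\{k\ge\bar k:\ \mbox{Step \ref{step18_NMCMA} is executed}\}$, at least one of which is infinite, and derive a contradiction in each case.

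For $K'$, the key remark is that when Step \ref{nmcma:step11} fires we have $\alpha^k=\zeta^k$ and therefore $w^{k+1}=w^k+\zeta^k d^k=\widetilde w^k$, so the acceptance test gives
\[
 f(w^{k+1})=f(\widetilde w^k)\le R^k-\gamma\max\{\zeta^k,\zeta^k\|d^k\|\}\le R^k-\gamma\bar\zeta .
\]
Passing to the limit along $k\in K'$ and using that, by Proposition \ref{propgrippo2007_1}, both $\{f(w^k)\}$ and $\{R^k\}$ converge to the same limit $R^*$, I would obtain $R^*\le R^*-\gamma\bar\zeta$, contradicting $\gamma\bar\zeta>0$. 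For $K''$, I would use that entering Step \ref{step18_NMCMA} requires the test $(\alpha^k)^2\|d^k\|^2>\tau\zeta^k=\tau\bar\zeta$ to hold for infinitely many $k$; since $w^{k+1}=w^k+\alpha^k d^k$ gives $\|w^{k+1}-w^k\|^2=(\alpha^k)^2\|d^k\|^2$, and Proposition \ref{propgrippo2007_1}(iii) guarantees $\|w^{k+1}-w^k\|\to 0$, this forces $(\alpha^k)^2\|d^k\|^2\to 0$, the desired contradiction. As both cases are impossible, $\bar\zeta=0$.

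The main obstacle is the $K'$ case: the contradiction there rests entirely on the fact that the accepted trial point coincides with the inner-cycle output ($w^{k+1}=\widetilde w^k$) and on the convergence of $f(w^k)$ and $R^k$ to a common value, so I would make sure Proposition \ref{propgrippo2007_1} is fully in force before invoking it. The $K''$ case is essentially immediate once part (iii) of that proposition is available, since the triggering inequality of Step \ref{step18_NMCMA} is exactly a strictly positive lower bound on $\|w^{k+1}-w^k\|^2$.
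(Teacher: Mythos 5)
Your proposal is correct and follows the paper's proof almost verbatim in its architecture: monotone non-increase of $\{\zeta^k\}$, contradiction with $\bar\zeta>0$, and the tail split into $K'$ (Step \ref{nmcma:step11}) and $K''$ (Step \ref{step18_NMCMA}). The $K'$ case is the paper's argument exactly, including the correct observation that $w^{k+1}=w^k+\zeta^k d^k=\widetilde w^k$ and the use of Proposition \ref{propgrippo2007_1}(ii) to drive $R^k-f(w^{k+1})\to 0$ against the fixed gap $\gamma\bar\zeta$. The one place you genuinely diverge is the $K''$ case: the paper first invokes the NMEDFL sufficient-decrease guarantee of Lemma \ref{lemma:NMEDFL:welldefined}, $f(w^{k+1})\le R^k-\gamma(\alpha^k)^2\|d^k\|^2$, passes to the limit to conclude $(\alpha^k)^2\|d^k\|^2\to 0$, and then derives the contradiction by noting that Step \ref{step19_NMCMA} would eventually have fired, reducing $\zeta^{k+1}$ below $\bar\zeta$. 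You instead bypass the linesearch lemma entirely: since $w^{k+1}=w^k+\alpha^k d^k$, the failed test defining $K''$ reads $\|w^{k+1}-w^k\|^2=(\alpha^k)^2\|d^k\|^2>\tau\bar\zeta$ infinitely often, which directly contradicts Proposition \ref{propgrippo2007_1}(iii). Both routes ultimately rest on Proposition \ref{propgrippo2007_1} (whose proof in turn uses Lemma \ref{lemma:NMEDFL:welldefined} through Lemma \ref{lemma:nonmondecr}, so nothing is gained in logical dependencies), but your version is slightly more economical and arguably cleaner, since it turns the branching condition of the algorithm itself into an explicit lower bound on $\|w^{k+1}-w^k\|$ rather than re-deriving the vanishing of $(\alpha^k)^2\|d^k\|^2$ from function-value convergence; the paper's version has the merit of being self-contained within the subcase, using only what the linesearch directly guarantees.
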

\begin{proof}
The first part of the  proof is similar to the one of Proposition \ref{th:CMA2_a_0}. In every iteration, either $\zeta^{k+1} = \zeta^k$ or $\zeta^{k+1} = \theta\zeta^k < \zeta^k$. Therefore, the sequence $\{\zeta^k\}$ is monotonically non-increasing. Hence, it results
\[
\lim_{k\to\infty} \zeta^k = \bar\zeta \geq 0.
\]
Let us suppose, by contradiction, that $\bar\zeta > 0$. If this were the case, there should be an iteration index $\bar k\geq 0$ such that, for all $k\geq\bar k$, $\zeta^{k+1} = \zeta^k = \bar\zeta$. Namely, for all iterations $k\geq \bar k$, step \ref{step18_NMCMA} or \ref{cma2:step11} are always executed. 

Let us now denote by
\begin{eqnarray*}
  K^\prime & = & \{k:\ k\geq\bar k,\ \mbox{and step \ref{nmcma:step11} is executed}\},\\
  K^{\prime\prime} & = & \{k:\ k\geq\bar k,\ \mbox{and step \ref{step18_NMCMA} is executed}\}.  
\end{eqnarray*}

{
Let us first suppose that $K^\prime$ is infinite. Then, for $k\in K^\prime$ and $k\geq \bar k$, we would have that infinitely many times
\[
   \gamma\bar\zeta \leq R^k - f(w^{k+1}).
\]
Then, taking the limit for $k\to\infty$ and $k\in K^\prime$, and recalling the results of Proposition \ref{propgrippo2007_1}, we get a contradiction with $\bar\zeta > 0$.}

\par\smallskip

On the other hand, let us suppose that $K^{\prime\prime}$ is infinite. Then, infinitely many times, we would have that, for $k\in K^{\prime\prime}$,
\[
 f(w^{k+1}) = f(w^k+\alpha^kd^k) \leq {R^k} -\gamma(\alpha^k)^2\|d^k\|^2.
\]
By taking the limit in the above relation and recalling the compactness of the level set ${\cal L}(w^0)$ and continuity of $f$, we would obtain
\[
\lim_{k\to\infty} (\alpha^k)^2\|d^k\|^2 = 0.
\]
But then, for $k\in K^{\prime\prime}$ and sufficiently large, it would happen that 
\[
 (\alpha^k)^2\|d^k\|^2 \leq \tau\bar\zeta
\]
which means that Algorithm \NMCMA\ would have executed step \ref{step19_NMCMA} setting $\zeta^{k+1} = \theta\bar\zeta$, thus reducing $\zeta^{k+1}$ below $\bar\zeta$. This contradicts our initial assumption and concludes the proof. 
\end{proof}

We are now ready to provide the convergence result for \NMCMA.

\begin{proposition}
Let Assumption \ref{ass:lipschtizagradient} hold and
let $\{w^k\}$ be the sequence of points produced by Algorithm \NMCMA. Then, $\{w^k\}$ admits limit points and (at least) one of them is stationary.
\end{proposition}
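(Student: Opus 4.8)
The plan is to mirror the monotone convergence proof for \CMA\ step by step, replacing its ingredients with their nonmonotone analogues and exploiting throughout the reference-value inequality $f(w^k)\le R^k$. First I would note that Proposition \ref{propgrippo2007_1}(i) gives $\{w^k\}\subseteq{\cal L}(w^0)$, so that the compactness of the level set (Assumption \ref{ass:compact}) guarantees the existence of limit points. Since Proposition \ref{th:NMCMA_a_0} yields $\zeta^k\to 0$ and $\zeta^k$ is reduced only by multiplication by $\theta$, the index set $K=\{k:\ \zeta^{k+1}=\theta\zeta^k\}$ must be infinite. Fixing any limit point $\bar w$ of the subsequence $\{w^k\}_K$ and applying Proposition \ref{limtildew_general} along it, I obtain an infinite $\bar K\subseteq K$ along which simultaneously $w^k\to\bar w$, $\zeta^k\to 0$ and $d^k\to -\nabla f(\bar w)$.

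Next I would split $\bar K$ according to which stepsize-reduction branch of \NMCMA\ fired, namely $K_1=\{k\in\bar K:\ \|d^k\|\le\tau\zeta^k\}$ (Step \ref{step10_NMCMA}) and $K_2=\{k\in\bar K\setminus K_1:\ (\alpha^k)^2\|d^k\|^2\le\tau\zeta^k\}$ (Step \ref{step19_NMCMA}); because $\zeta^{k+1}=\theta\zeta^k$ for every $k\in\bar K$, these two sets cover $\bar K$ and at least one is infinite. If $K_1$ is infinite, then $\|d^k\|\le\tau\zeta^k\to 0$ together with $d^k\to-\nabla f(\bar w)$ forces $\|\nabla f(\bar w)\|=0$, and $\bar w$ is stationary.

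The substantive case is $K_2$ infinite, and here I would argue by contradiction, assuming $\|\nabla f(\bar w)\|>0$. Combining $(\alpha^k)^2\|d^k\|^2\le\tau\zeta^k\to 0$ with $\|d^k\|\to\|\nabla f(\bar w)\|>0$ gives $\alpha^k\to 0$ on $K_2$, so I would partition $K_2$ into $\bar K_2=\{k\in K_2:\ \alpha^k=0\}$ and $\widehat K_2=\{k\in K_2:\ \alpha^k>0\}$. On $\bar K_2$ the failure of the test at Step \ref{NMEDFL:STEP4} of NMEDFL yields $f(w^k+\zeta^kd^k)>R^k-\gamma(\zeta^k)^2\|d^k\|^2$; using $f(w^k)\le R^k$ this becomes $f(w^k+\zeta^kd^k)>f(w^k)-\gamma(\zeta^k)^2\|d^k\|^2$, and after dividing by $\zeta^k$ the Mean-Value Theorem produces $\hat\alpha^k\in(0,\zeta^k)$ with $-\nabla f(w^k+\hat\alpha^kd^k)^Td^k<\gamma\zeta^k\|d^k\|^2$. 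Passing to the limit, with $\hat\alpha^k\to 0$ and the right-hand side vanishing thanks to the extra factor $\zeta^k\to 0$ coming from the parabolic penalty, I get $\|\nabla f(\bar w)\|^2\le 0$, a contradiction. On $\widehat K_2$ I would invoke instead the stopping condition of the while loop of NMEDFL at the accepted step, again absorbing $R^k$ into $f(w^k)$, dividing by $\alpha^k/\delta$, applying the Mean-Value Theorem, and letting $\alpha^k\to 0$ to reach the same contradiction.

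The hardest part will be the bookkeeping around the reference value $R^k$: unlike in \CMA, the sufficient-decrease tests of \NMCMA\ and NMEDFL are stated against $R^k$, which may strictly exceed $f(w^k)$, so one must check that the inequality $f(w^k)\le R^k$ (guaranteed by Lemma \ref{lemma:NMEDFL:welldefined} and the definition of $R^k$) always points in the direction needed to recover the monotone estimates. Once this is secured, the squared-stepsize (parabolic) penalty actually simplifies the limit argument relative to the monotone case, since the corresponding error term tends to zero rather than to $\gamma\|\nabla f(\bar w)\|^2$, so the nonmonotone analysis turns out to be a direct strengthening of the monotone one.
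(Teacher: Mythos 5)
Your proposal is correct and follows essentially the same route as the paper's own proof: the same extraction of a subsequence $\bar K$ via Propositions \ref{th:NMCMA_a_0} and \ref{limtildew_general}, the same split into $K_1$ and $K_2$, and the same Mean-Value-Theorem contradiction on $\bar K_2$ and $\widehat K_2$, where the inequality $f(w^k)\le R^k$ (immediate from the definition of $R^k$) is used exactly as the paper does, implicitly, to reduce the nonmonotone tests to the monotone estimates, with the parabolic penalty making the right-hand side vanish in the limit.
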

\begin{proof}

By the compactness of ${\cal L}(w^0)$ and considering that $w^k\in{\cal L}(w^0)$ for all $k$, we know that $\{w^k\}$ admits limit points.

Now, let us introduce the following set of iteration indices
\[
 K = \{k:\ \zeta^{k+1} = \theta\zeta^k\}
\]
and note that, since $\lim_{k\to\infty}\zeta^k = 0$, $K$ must be infinite.  Let $\bar w$ be any limit point of the subsequence $\{w^k\}_K$, then, by Proposition \ref{limtildew_general}, a subset $\bar K\subseteq K$ exists such that
\begin{subequations}\label{teo:nmcmaconv1}
\begin{eqnarray}
&& \label{subeqanm}\lim_{k\to\infty, k\in \bar K} w^k = \bar w,\\
&& \label{subeqbnm}\lim_{k\to\infty, k\in \bar K} d^k = -\nabla f(\bar w),\\
&& \label{subeqcnm}\lim_{k\to\infty, k\in \bar K} \zeta^k = 0
\end{eqnarray}
\end{subequations}

Then, let us now split the set $\bar K$ into two subsets, namely,
\begin{eqnarray*}
 K_1 & = & \{k\in \bar K:\ \|d^k\| \leq \tau\zeta^k\},\\
 K_2 & = & \{k\in \bar K\setminus K_1: (\alpha^k)^2\|d^k\|^2 \leq \tau\zeta^k\}.
\end{eqnarray*}
Note that, $K_1$ and $K_2$ cannot be both finite.

First, let us suppose that $K_1$ is infinite. In this case, (\ref{teo:nmcmaconv1}) imply that
\[
 \lim_{k\to\infty,k\in K_1} \|d^k\| = \|\nabla f(\bar w)\| = 0
\] 
thus concluding the proof in this case.

Then, let us suppose that $K_2$ is infinite. In this case, we proceed by contradiction and assume that $\bar w$ is not stationary, i.e. $\|\nabla f(\bar w)\| > 0$. By the definition of $K_2$ and considering (\ref{subeqcnm}), we obtain
\[
\lim_{k\to\infty,k\in K_2} (\alpha^k)^2\|d^k\|^2 = 0,
\]
which, considering (\ref{subeqbnm}), yields

\begin{equation}\label{eq:tildealfa2=0}
\lim_{k\to\infty,k\in K_2} \alpha^k = 0.
\end{equation}

Now, let us further partition $K_2$ into
\begin{itemize}
    \item[i)] $\bar K_2 = \{k\in K_2:\ \alpha^k = 0\}$
    \item[ii)] $\widehat K_2 = \{k\in K_2:\ \alpha^k > 0\}$.
\end{itemize}

Let us first suppose that infinitely many times point (i) happens. This means that 
\[
 \frac{f(w^k) - f(w^k+\zeta^kd^k)}{\zeta^k} < \gamma \zeta^k\|d^k\|^2.
\]
Then, by the Mean-Value Theorem, a number $\hat\alpha^k\in (0,\zeta^k)$ exists such that
\[
 -\nabla f(w^k + \hat\alpha^kd^k)^Td^k < \gamma \zeta^k\|d^k\|^2.
\]
Taking the limit in the above relation for $k\to\infty$, $k\in\bar K_2$, recalling (\ref{teo:nmcmaconv1}), we obtain
\[
 \|\nabla f(\bar w)\|^2 \leq 0
\]
i.e. $\|\nabla f(\bar w)\| = 0$.

Now, let us suppose that point (ii) happens. This means that 
\[
 \frac{f(w^k) - f(w^k+(\alpha^k/\delta)d^k)}{\alpha^k/\delta} < \gamma (\alpha^k/\delta)\|d^k\|^2.
\]

Then, reasoning as in the previous case, and recalling \eqref{eq:tildealfa2=0} we again can conclude that $\|\nabla f(\bar w)\| = 0$, thus concluding the proof.
\end{proof}

\section{Numerical results}\label{sec:CMA_numerical}

In this section, we report  numerical results when comparing the implementations of  \CMA\ and \NMCMA\ with   full batch and mini-batch methods.
We consider problems with different values of $n$ and samples $P$ to check how the algorithmic performance changes varying the size of the optimization
problem and the size of the batch.

The aim of the computational experience is twofold: on the one hand, we aim to analyze the additional computational effort required by the  \CMA\ and \NMCMA\  with respect to a standard online method and the effectiveness of the  updating rules for the stepsize to avoid it to be driven to zero too fast; on the other hand, the quality of the solution obtained in a limited time in comparison with online methods and classical batch methods. 

\subsection{Experimental testbeds}
The performances of the algorithms were tested on the minimization problem raised when minimizing the $\ell_2$ regularized least square training error of a Deep  Neural Network (DNN),  
namely
$$\underset{w}{\text{min}}f(w)=\frac{1}{P}\sum_{p=1}^P\|\tilde{y}(w;x_p)-y_p\|^2 +\rho\Vert w\Vert ^2
$$
where $\lbrace x_p,y_p\rbrace_{p=1}^P$, with $x_p \in {\rm I\!R}^d$ and $y_p \in {\rm I\!R}^m$ are the training data and  $\tilde{y}_p$ is the output of the DNN with weights $w\in\R^n$.
The regularization parameter $\rho$ is set equal to $10^{-6}$. 

The DNN is organized in $L$ hidden layers.
Each neuron  is characterized by an activation function $g(\cdot)$, that we set to be the sigmoidal  function for all the neurons,
with the only exception of the output layer which has a linear activation function. 
Denoting with $w_\ell$ the weights  from layer $\ell$ to layer $\ell+1$,
the output takes the form
$$\tilde{y}(w;x_p)=w_{L}g(w_{L-1}g(w_{L-2}\dots g(w_1^T x_p))).$$
We considered four different neural network architectures classified into small and large sizes as reported in Table \ref{tab:DNN_arch}.
For the sake of simplicity, we considered only networks with the same number of neurons per hidden layer, and we denote as
$[L\times N]$ the  neural network with $L$ hidden layers and $N$ neurons per layer.
The dimension $n$ of the weights  $w$  depends on the number of neurons per layer $N^j$, $j=1,\dots,L$ and on the input dimension $d$, and in our case is equal to $n=N(d+1)+N^2(L-1)$.

\begin{table}[]
    \centering
    \caption{Deep Network architectures}
    \begin{tabular}{c|cccc|ccc}
&\multicolumn{4}{c|}{small networks} & \multicolumn{3}{c}{large networks} \\
&          $[1\times 50]$ & $[3\times 20]$ & $[5\times 50]$ & $[10\times 50]$ &$[12\times 300]$ & $[12\times 500]$ & $[30\times 500]$ \\[.4em] \hline
         $L$ & 1 &3&5&10&12&12&30\\ 
         $N$ &50&20&50&50&300&500&500\\
    \end{tabular}
    \label{tab:DNN_arch}
\end{table}

All the algorithms were tested over a set of eleven problems with a different number of samples $P$ and features $d$ taken from open libraries \cite{keel,UCI,sido}. The datasets have been divided into two groups: \textit{small-size} datasets, where the number of samples and/or features is small enough to   be  tackled by standard batch methods; \textit{big-size} datasets, where the number of samples and/or features challenges fully batch methods and motivates the use of mini-batch algorithms. 

Table \ref{tab:CMA_dataset} provides a description of the datasets  by reporting the number of samples in the training  set and the number of features, along with the number of variables of each optimization problem corresponding to different neural architectures $[L\times N]$. Datasets are sorted according to their dimensions, with small problems reported first, and large problems reported later. The order within each group is alphabetical. All the datasets are publicly available, and the source where they can be found is reported as a reference for each dataset.

\begin{table}
  \centering
    \begin{adjustbox}{width=\textwidth}
        \begin{tabular}{l|l|rrr|rrrr|rrr}
            \hline
            &  & &  &  & \multicolumn{7}{c}{$n$ in K } \\ 
            &   &&&&\multicolumn{4}{c|}{small network} & \multicolumn{3}{c}{large network} \\
            &Dataset & \multicolumn{1}{r}{$P$} &  & \multicolumn{1}{r|}{$d$} & \multicolumn{1}{l}{$[1\times 50]$} & \multicolumn{1}{l}{$[3\times 20]$} & \multicolumn{1}{l}{$[5\times 50]$} & \multicolumn{1}{l}{$[10\times 50]$} & \multicolumn{1}{|l}{$[12\times 300]$} & \multicolumn{1}{l}{$[12\times 500]$} & \multicolumn{1}{l}{$[30\times 500]$}  \bigstrut[b]\\
            \hline
            \parbox[t]{2mm}{\multirow{5}{*}{\rotatebox[origin=c]{90}{small}}} 
            &Ailerons \cite{keel} & 10312 &   & 41    & 2.1  & 1.64  & 12.1 & 24.6 & 935 & 2557& 7567 \bigstrut[t]\\
            & Bejing Pm25 \cite{UCI} & 31317 &  & 48    & 2.45  & 1.78  & 12.5 & 25 & 935 & 2558 & 7568 \\
            & Bikes Sharing \cite{UCI} & 13034 &   & 59    & 3.0  & 2.0  & 13 & 2.55 & 936 & 2559 & 7569\\
            & California \cite{keel} & 15480 &   & 9     & 0.5   & 1.0  & 10.5 & 23 & 933 & 2556 & 7556 \\
            & Mv \cite{keel}    & 30576 &  & 13    & 0.7   & 1.08  & 10.7 & 23.2 & 934 & 2556 & 7566 \bigstrut[b]\\
            \hline
            \parbox[t]{2mm}{\multirow{6}{*}{\rotatebox[origin=c]{90}{big}}}
            & BlogFeedback \cite{UCI} & 39297 &  & 281   & 14.1 & 6.44  & 24.1 & 36.6 & 948 & 2570 & 7580 \bigstrut[t]\\
            & Covtype \cite{UCI} & 435759 &  & 55    & 2.8  & 1.92  & 12.8 & 25.3 & 936 & 2558 & 7568 \\
            &Protein \cite{UCI}& 109313 &  & 75    & 3.8  & 2.32 & 13.8 & 26.3 & 937 & 2560 & 7570  \\
            &Sido \cite{sido}   & 9508  &   & 4933  & 247 & 99.5 & 257 & 269 & 1180 & 2802 & 7812 \\
            &Skin NonSkin \cite{UCI} & 183792 &  & 4     & 0.25   & 0.9   & 10.3 & 22.8 & 933 & 2555& 7566 \\
            & YearPredictionMSD \cite{UCI} & 386508 &  & 91    & 4.6  & 2.64  & 14.6 & 27.1 & 937 & 2561 & 7571 \bigstrut[b]\\
            \hline
    \end{tabular}%
    \end{adjustbox}
\caption{Dataset description  ($P=$ number of training samples; $d$= number of input features) and $n=$number of variables (in K) of the optimization problems corresponding to the network architecture $[L\times N]$}
  \label{tab:CMA_dataset}%
\end{table}%

\subsection{Algorithms settings}
Both the neural network structures and the optimization algorithms were implemented in Python version 3.8, leveraging  \textit{Numpy} (version 1.20.2) \cite{numpy} as a scientific computing library. 
Numerical tests are carried out on  an Intel(R) Core(TM) i7-3630QM CPU 2.4 GHz.

We compared the performance of \CMA\ and \NMCMA\  against a full batch method and an incremental gradient method. All the algorithms run with a maximum computational time of 100 seconds.

Since the problems are highly non-convex, numerical results might be affected by the starting point used to initialize the algorithms. Thus, a \textit{multi-start} strategy was implemented so that for each problem we compared each algorithm over five runs, each starting with a random point $w^0$ which is the same for the four algorithms.  

As a batch method, we considered  a limited-memory Quasi-Newton optimization algorithm L-BFGS \cite{NoceWrig06} that can be regarded as one of the state-of-the-art batch algorithms. We used L-BFGS from the standard implementation available in  \textit{Scipy} (version 1.6.2) \cite{scipy}. Default  hyperparameters have been used as they resulted in the best performance.

As a mini-batch method, we implemented an Incremental Gradient (IG) method, $I^k=\{1,\dots P\}$ for all $k$.
As updating rule for the stepsize $\zeta^k$ in Algorithm \ref{alg:IG}, we selected the one suggested in \cite{Goodfellow-et-al-2016}
\begin{equation}\label{eq:stepsizeIG}
    \zeta^{k+1}=\zeta^k(1-\epsilon \zeta^k)
\end{equation}
with $\zeta^0=0.5$ and $\epsilon=10^{-3}$, that gave the best performance{ out of an hyperparameter tuning phase}. 

For the sake of fairness, the same IG implementation is used for the \texttt{Inner\_Cycle} Algorithm \ref{alg:innercycle} in both \CMA\ and \NMCMA\  methods, as we aim to analyze  to what extent controlling the stepsize  in \CMA\ and \NMCMA\  affects the optimization performance. In \CMA\ and \NMCMA\ the stepsize $\zeta^k$ is automatically updated  according to the controlling steps in Algorithms \ref{alg:CMA2} and \ref{alg:NMCMA}.

The tuning of the algorithmic hyperparameters of \CMA \ and \NMCMA\ has been performed by a grid search procedure. The resulting best values are reported in Table \ref{tab:hypersetting}.

\begin{table}[h]
    \centering
    \caption{Hyperparameters settings in \CMA\ and \NMCMA\ }
    \begin{tabular}{c|cccccc}
       &  $\zeta^0$ & $  \tau$ & $\theta $ & $\gamma $ & $
     \delta $ &$M$  \\\hline
         \CMA\ & 0.5 &$10^{-2}$ &0.5 & $10^{-6}$& 0.5& - \\
         \NMCMA\ & 0.5 & $10^{-2}$ & 0.5&  $10^{-6}$ & 0.5& 5\\ 
    \end{tabular}
    \label{tab:hypersetting}
\end{table}

Further, for the sake of simplicity, in \CMA\ , Step \ref{step20_CMA2} of Algorithm \ref{alg:CMA2},  we have set $w^{k+1}=y^k=w^k+\alpha^kd^k$ .

\subsection{Performance analysis}

In this section, we analyze the computational behavior of the   \CMA\ and \NMCMA\ with the aim of 
\begin{itemize}
    \item assessing the additional effort required over the basic IG method and  the effectiveness of the  updating rules for the stepsize $\zeta^k$;
    \item comparing performance with the basic IG and the L-BFGS  methods when a restriction on the computational time is given (100 seconds). 
\end{itemize}

To this aim, we consider the set of 77 test problems ${\cal P}$, obtained by considering the 7 network architectures described in Table \ref{tab:CMA_dataset} and the 11 different dataset from Table \ref{tab:CMA_dataset}.
Each of these problems has been solved by each of the four algorithms starting from five different random points with a total of 385 runs for each algorithm.

We first observe that both  \CMA\ and \NMCMA\ require at least a function evaluation in the trial point $f(\tilde w^k)$ at the end of each epoch. 
The smaller the number of function evaluations, the less the linesearch procedure is used. 
However, this may also result in failures in accepting the trial point $\tilde w^k$ produced by the \texttt{Inner\_Cycle}  algorithm, meaning that all the computations of the \texttt{Inner\_Cycle} are lost, and the stepsize $\zeta^k$ is reduced.
To analyze the occurrence of such cases, we construct the boxplots in Figure \ref{fig:boxplot_FF} where all the runs of \CMA\ and \NMCMA\  have been considered.
In \ref{fig:boxplot_ff1}  the {average number} of function evaluations per iteration is reported for small (upper graph) and large (lower graphs) problems; if it is greater than 1.0, the LS is activated at least once. In \ref{fig:boxplot_ff2} we report the number of times that the trial point $\tilde w^k$ has been accepted. 

\begin{figure}[h]
    \centering     
        \subfigure[Average number of function evaluations per iteration]
            {\label{fig:boxplot_ff1}
            \includegraphics[width=\textwidth]{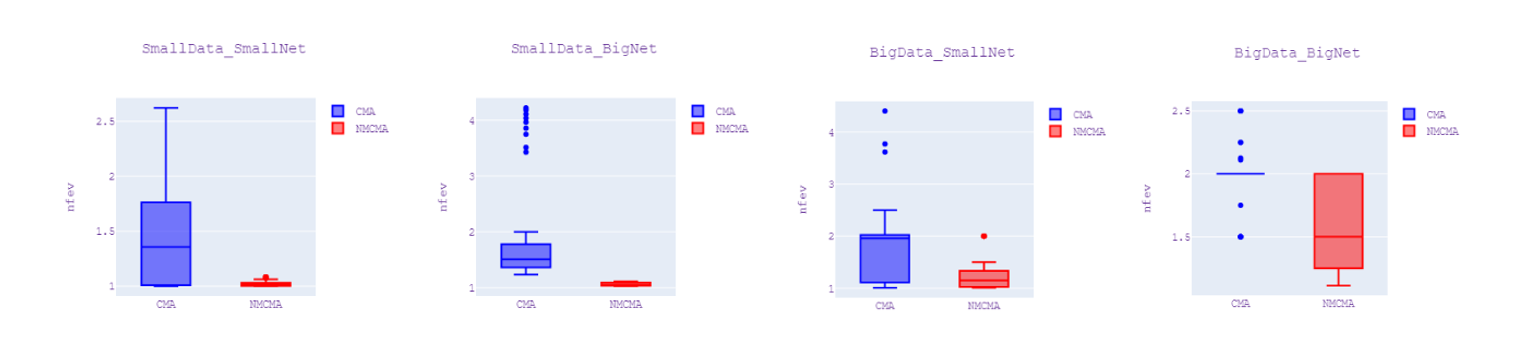}}
        \subfigure[Acceptance of the trial point $\tilde w^k$]
            {\label{fig:boxplot_ff2}
            \includegraphics[width=\textwidth]{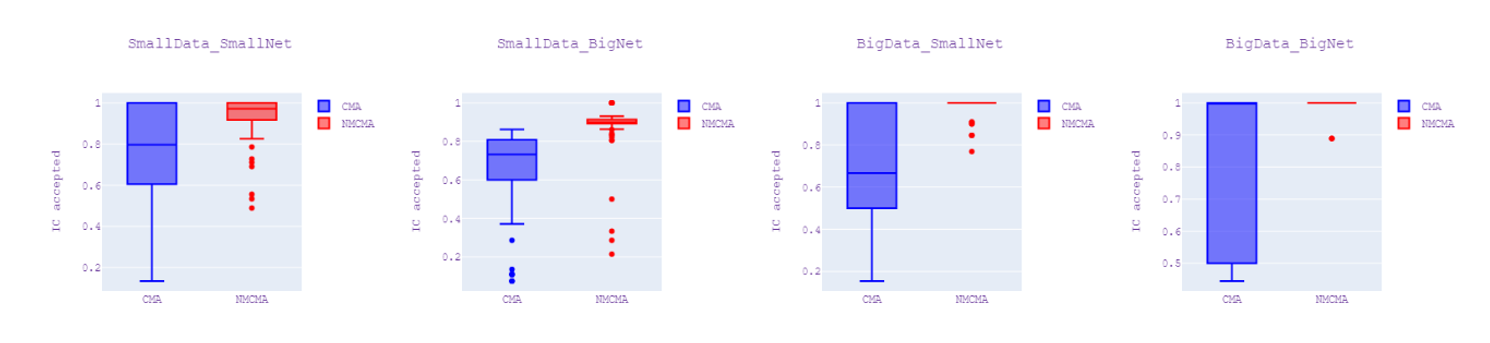}}
     \caption{Boxplot on all the runs: comparison of \CMA\  and \NMCMA\ on the average number of function evaluations and acceptance rate of  $\tilde w^k$}
    \label{fig:boxplot_FF}
\end{figure}

The boxplots of Figures \ref{fig:boxplot_ff1} and \ref{fig:boxplot_ff2} are coherent. 
Indeed, from Figure \ref{fig:boxplot_ff1}, we observe that \NMCMA\ almost always performs a single function evaluation per iteration, which is less than the iterations performed by \CMA. 
This corresponds to the more often acceptance of the trial point, namely the standard iteration of the IG method.

\begin{figure}[h]
    \centering     
        \subfigure[Value of the final stepsize $\zeta$:  comparison of \CMA, \NMCMA\ and IG]
            {\label{fig:boxplot_3}
            \includegraphics[width=\textwidth]{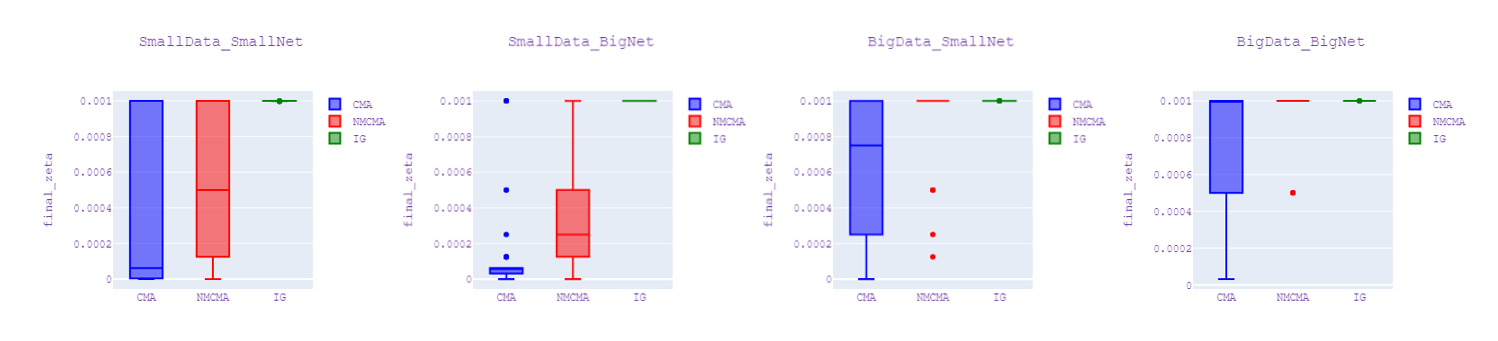}}
        \subfigure[Linesearch failures: $\alpha^k=0$]
            {\label{fig:boxplot_4}
            \includegraphics[width=\textwidth]{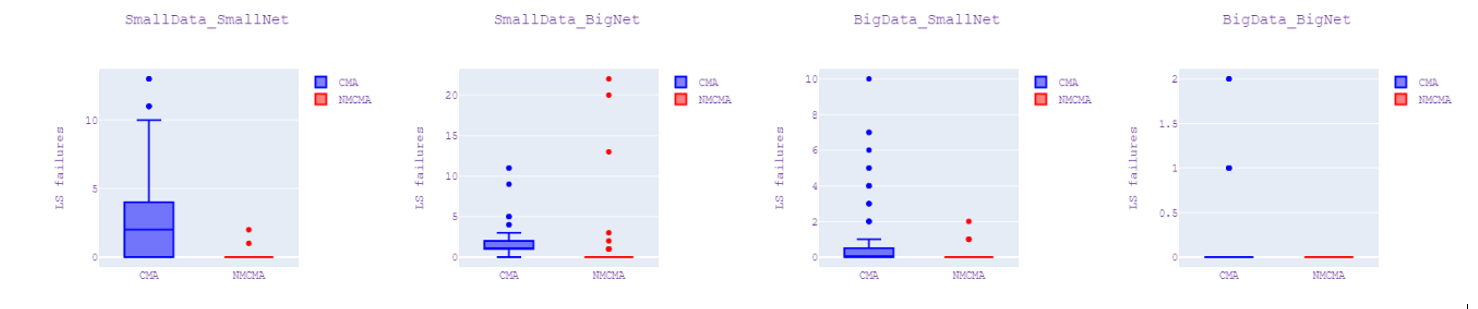}}
     \caption{Boxplot on all the runs: stepsize values and restarts}
    \label{fig:boxplot}
\end{figure}

In Figure \ref{fig:boxplot_3}, we report the average value of the final stepsize for the algorithms \CMA, \NMCMA , and IG. In general \NMCMA\ tends to maintain a larger final stepsize than both \CMA \ and IG, {which is coherent with the fact that in \NMCMA\ the trial point is  accepted more often than in \CMA}.  
The three algorithms start with the same initial value of the stepsize $\zeta^0$, however, in IG the stepsize is updated following \eqref{eq:stepsizeIG} which allows a slower decrease rather than the simple halving rule of both \CMA\ and \NMCMA. 
This may explain why the {average} final stepsize of IG is larger than in \CMA\ and \NMCMA. 
Moreover, it is interesting to underline how the range of variability of the final stepsize is bigger for both \CMA \ and \NMCMA \, suggesting that the two methods are able to tune the stepsize according to the specified instance under consideration.

Figure \ref{fig:boxplot_4} reports the number of failures of the LS in \CMA \ and \NMCMA, namely the times that $\alpha^k=0$ and a restart is needed from the last point $w^k$ so that all the computations of the \texttt{Inner\_Cycle} are lost. We note that both in  \CMA\ and \NMCMA,  $\alpha^k$ can be set to zero not only in the LS. However, we aim to visualize how the non-monotone rule helps in accepting the stepsize.

From the boxplots above, it seems that \CMA\ and \NMCMA\ accept often the trial point of the basic IG iteration, controlling the stepsize reduction, at the cost of extra calculus of objective functions that are not needed in the basic IG method.

We now aim to analyze the performance in terms of the quality of the returned solution (value of the objective function) when the computational time is restricted. In this last case, we compare also with the L-BFGS which in general decreases faster, being a full batch method.

To assess the numerical performance of the algorithms, we considered the performance profile curves \cite{dolan2002benchmarking},
 with the convergence test proposed in \cite{more2009benchmarking} when evaluating the performance of derivative-free solvers with a restricted computational budget.
In particular, given a set of solvers ${\cal S}$ and a set of test problems ${\cal P}$, a problem $p$ is \emph{solved} by $s$ if it returns a point $w$ such that
\begin{equation} \label{eq:def_solved}
    f(w)\leq f_L+\tau(f(w^0)-f_L),    
\end{equation}
where $f_L$ is the best value returned by all the solvers on the 5 multi-start runs, $f(w^0)$ is the starting point of the solvers (it is the same for all the solvers) and $\tau$ is a tolerance. The smaller $\tau$, the more stringent the requirement.

For each  $s\in {\cal S}$ and $p\in {\cal P}$, we define $c_{s,p}$ as the total time needed by solver $s$ to solve problem $p$ and the ratio
$$
r_{s,p}=\frac{c_{s,p}}{\min_{s\in S}\{c_{s,p}\}}
$$
which represents the performance of solver $s$ on solving problem $p$ compared to the best-performing solver on problem $p$. Thus $r_{s,p}= 1$ if the solver $s$ solves the problem within $\min_{s\in S}\{c_{s,p}\}$ seconds, and $r_{s,p}>1$ otherwise. Then, the \emph{performance profile} $\rho_s(\alpha)$ of a solver $s$ is given by
$$
\rho_s(\alpha)=\frac{1}{|{\cal P}|}\left|\{p\in {\cal P}\;|\;r_{s,p}\leq \alpha\}\right| \qquad \alpha\ge 1, .
$$
Performance profiles seek to capture how well the
solver performs compared to the other solvers in $\cal S$ on the set of problems in $\cal P$.
When $\alpha=1$, $\rho_s(\alpha)$ is the fraction of problems for which solver $s$ performs
the best, when $\alpha$ is sufficiently large,
it is the fraction of problems solved by
$s$. Solvers with high values for $\rho_s(\alpha)$ are preferable \cite{more2009benchmarking}.

In particular, in Figure \ref{fig:Performance_profile} we report the performance profiles of the four algorithms when left running for 100 seconds. Results are reported for decreasing values of $\tau$,  $10^{-1},10^{-2}$ and $10^{-4}$, cumulative on all the networks but separating small-size and big-size datasets, with the former on the left and the latter on the right of the figures.

\begin{figure}
    \centering     
        \subfigure[Small datasets: $\tau=10^{-1}$]
            {\label{fig:a}
            \includegraphics[width=0.48\textwidth]{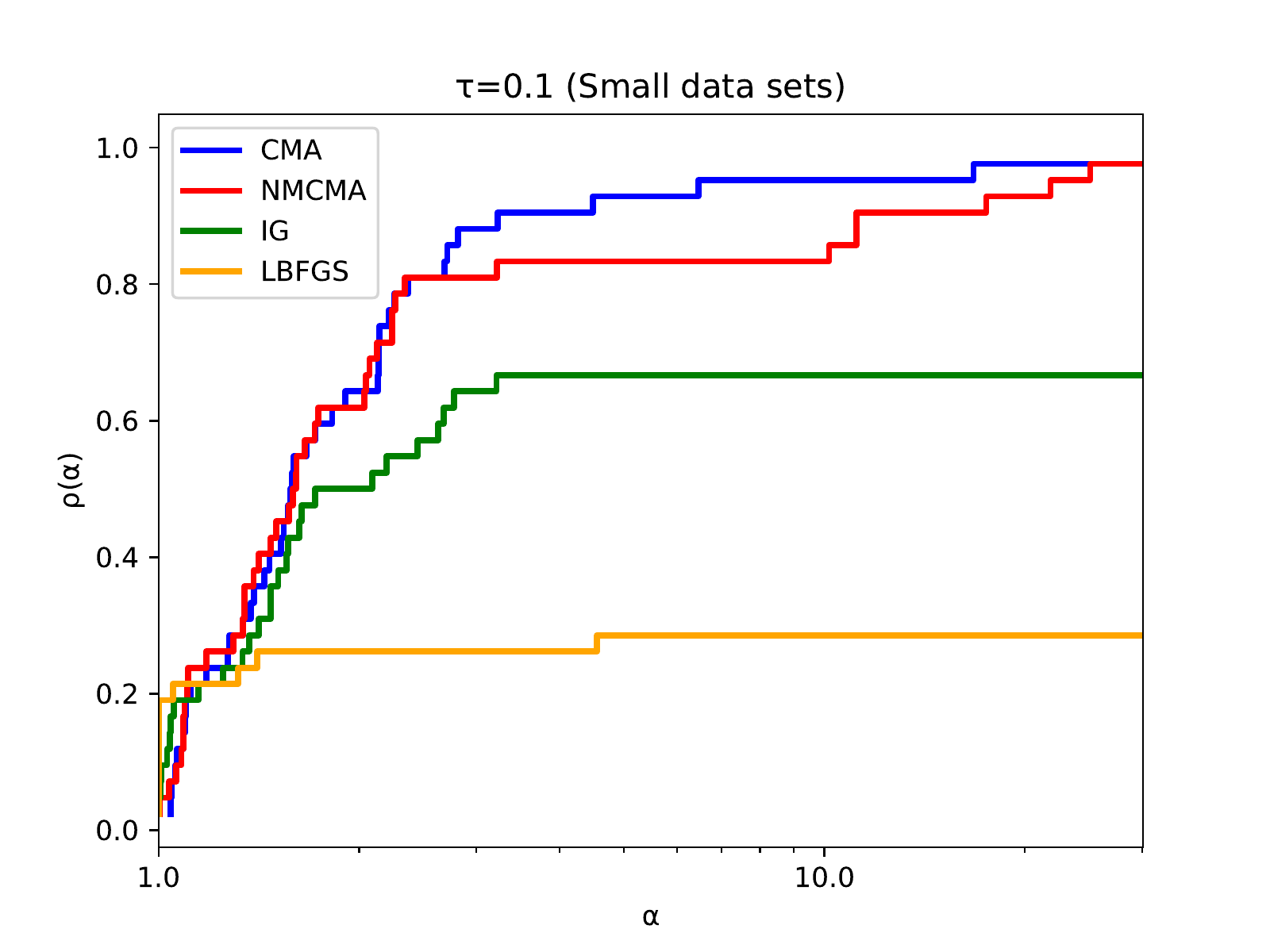}}
        \subfigure[big-size datasets: $\tau=10^{-1}$]
            {\label{fig:b}
            \includegraphics[width=0.48\textwidth]{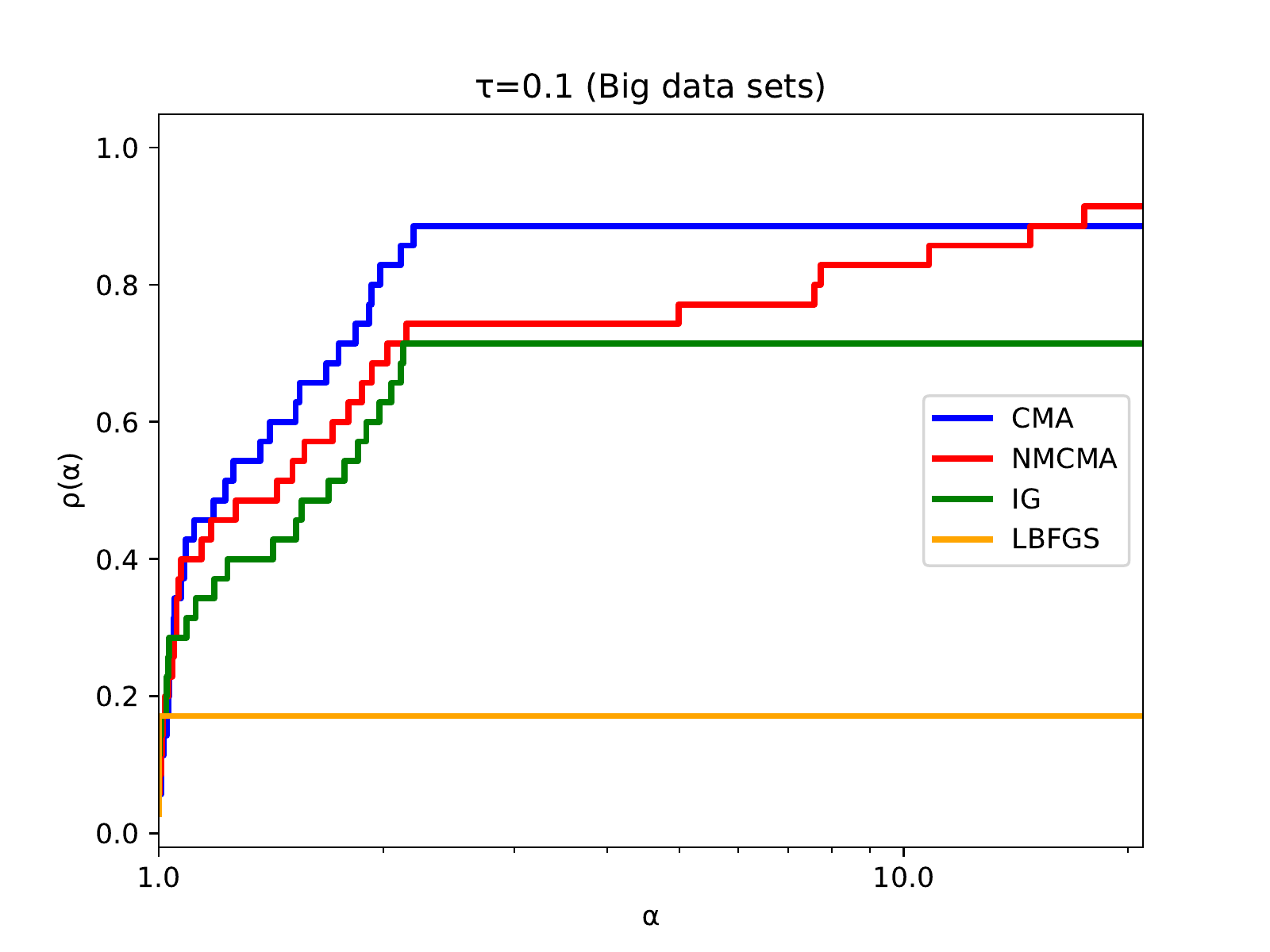}}
            \hfill
            \subfigure[Small datasets $\tau=10^{-2}$]
            {\label{fig:c}
            \includegraphics[width=0.48\textwidth]{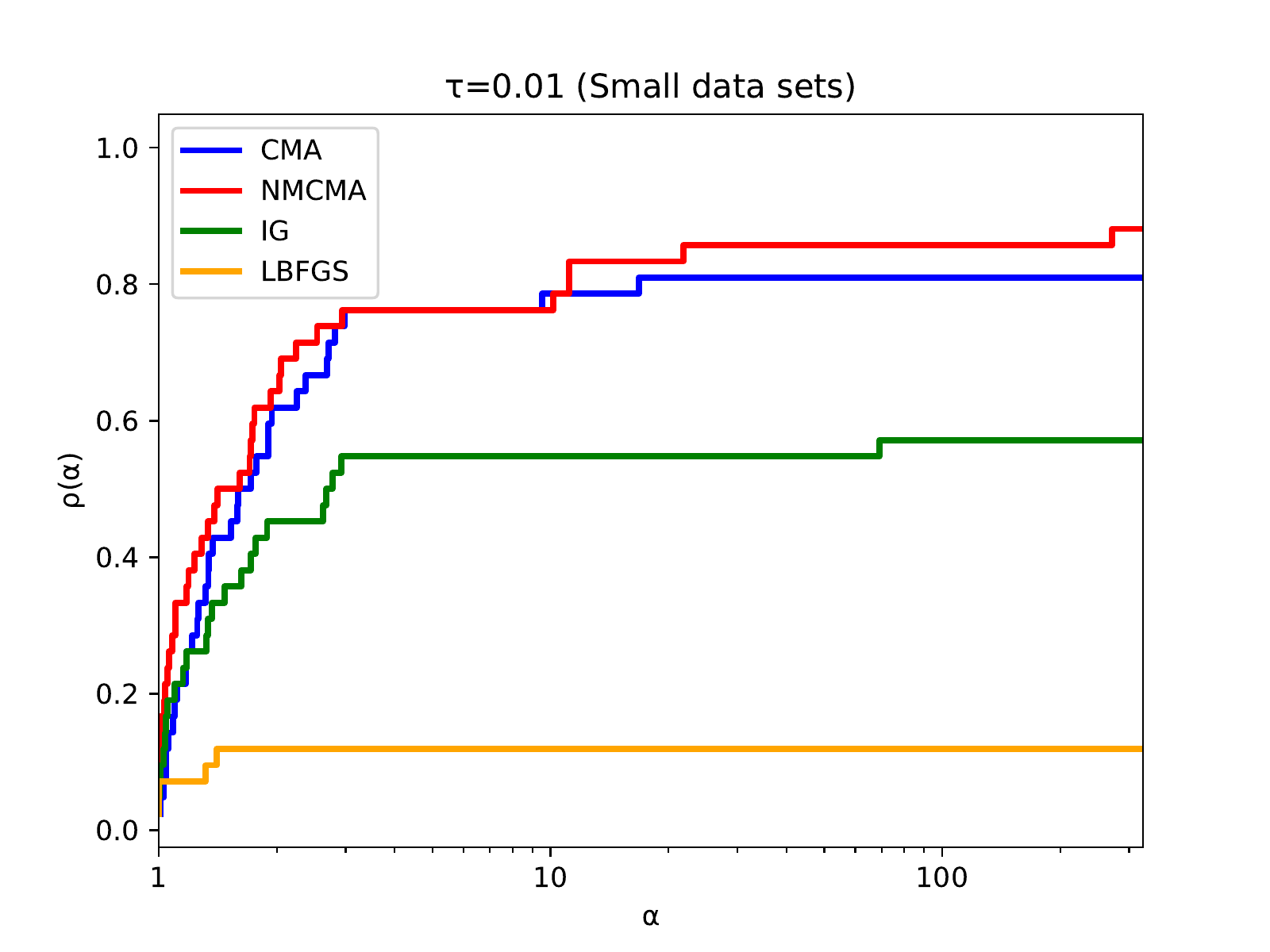}}
        \subfigure[big-size datasets $\tau=10^{-2}$]
            {\label{fig:d}
            \includegraphics[width=0.48\textwidth]{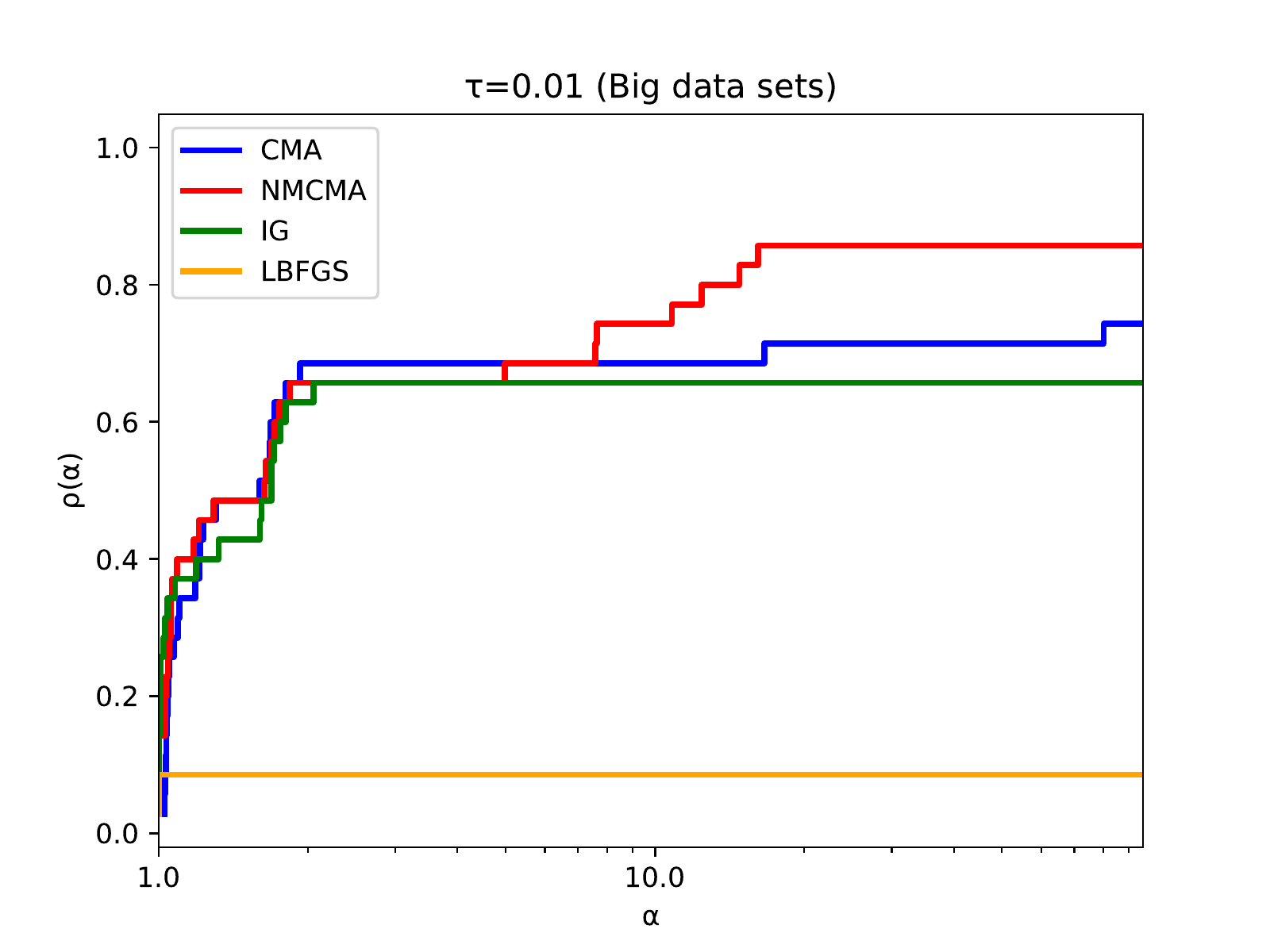}}
            \hfill
            \subfigure[Small datasets $\tau=10^{-4}$]
            {\label{fig:e}
            \includegraphics[width=0.48\textwidth]{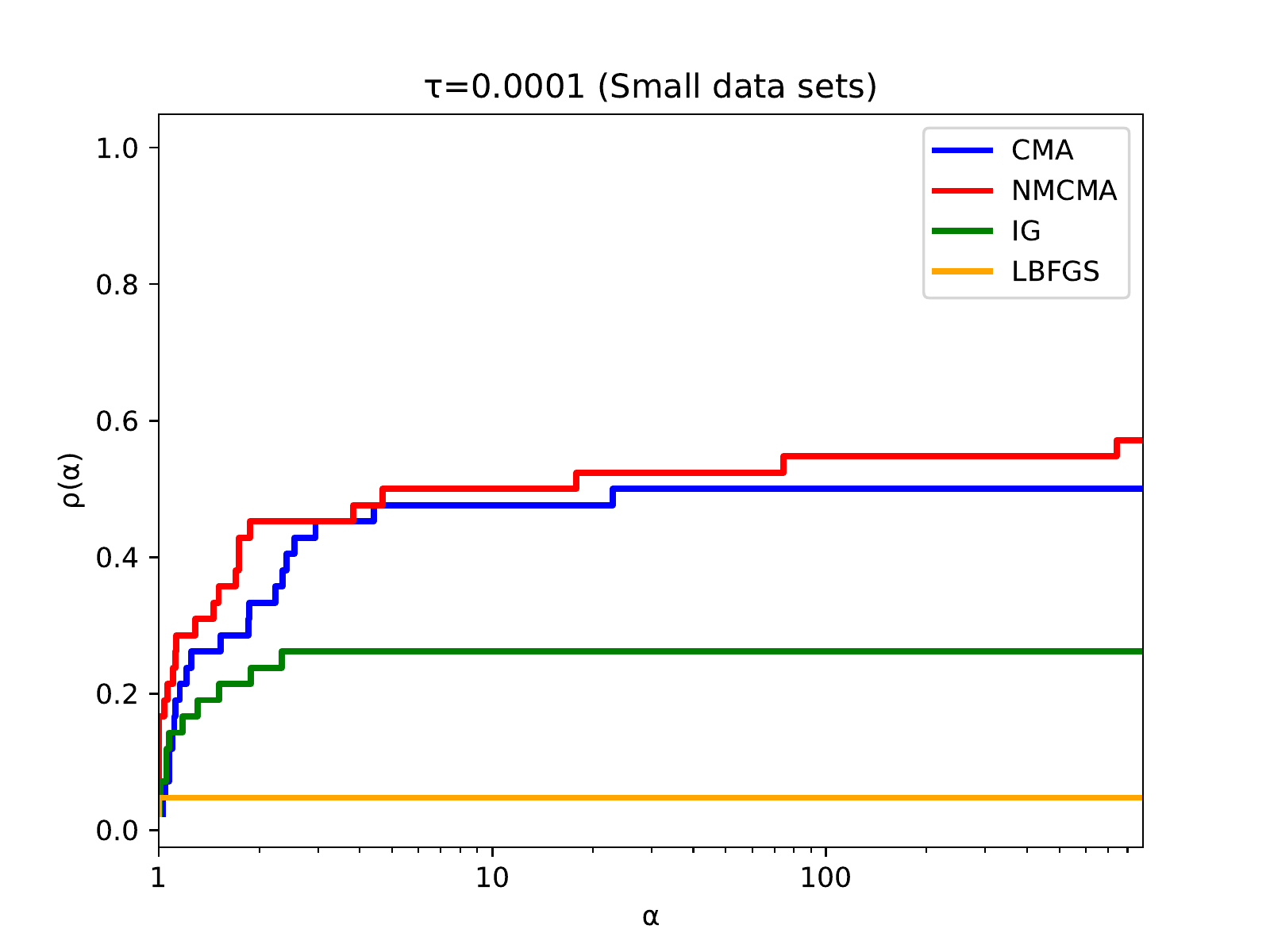}}
        \subfigure[big-size datasets $\tau=10^{-4}$]
            {\label{fig:f}
            \includegraphics[width=0.48\textwidth]{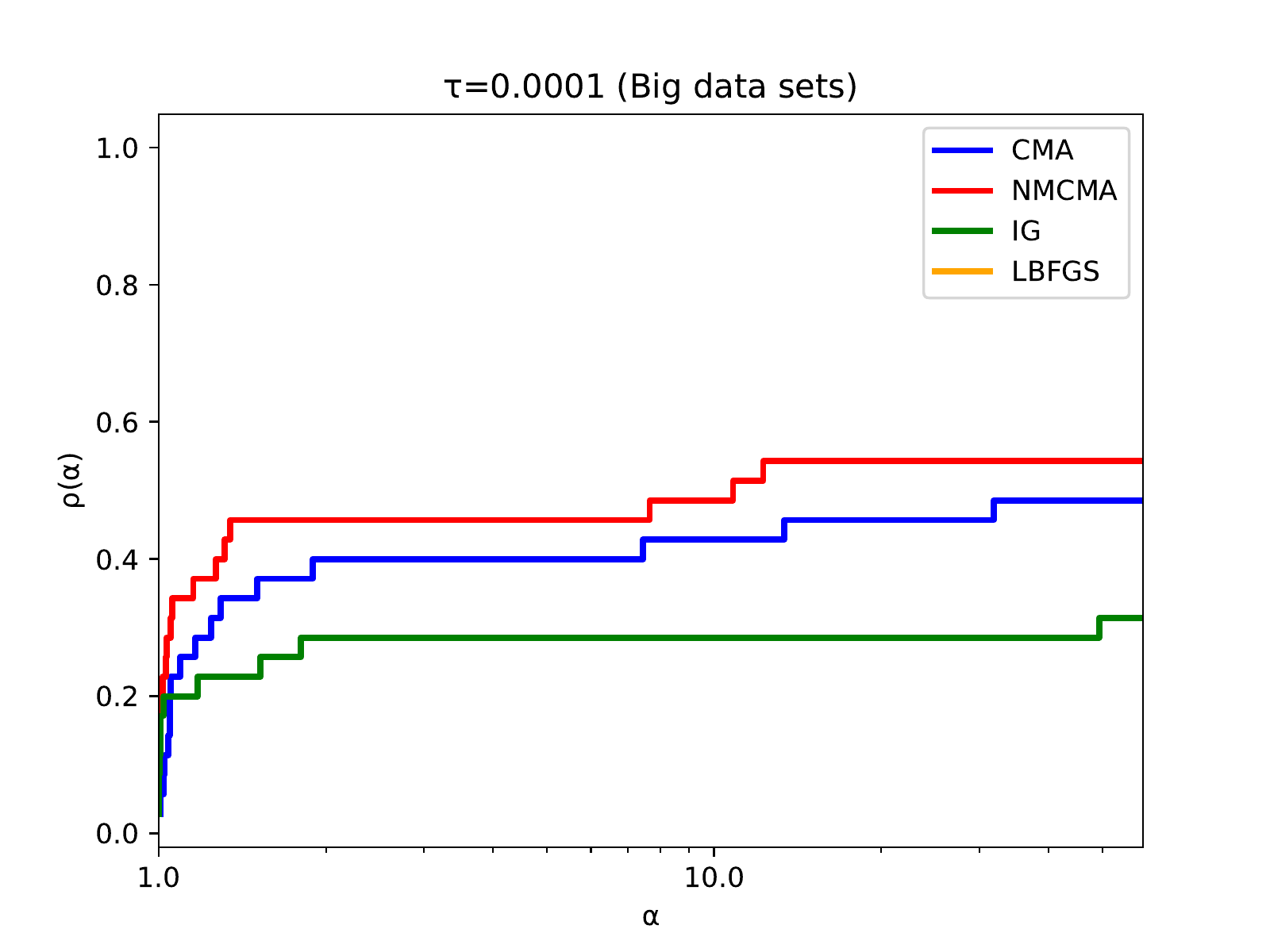}}
    \caption{Performance profiles on all the networks  for different values of $\tau$. Results for small-size datasets are reported to the left, and for big-size datasets to the right.}
    \label{fig:Performance_profile}
\end{figure}

When $\tau=10^{-1}$, \CMA\ and \NMCMA\ have similar performance profiles and they outperformed both IG and L-BFGS methods on small-size and large-size problems. 

{When $\alpha=1$ \CMA\ and \NMCMA\  solve a smaller portion of the problems. For increasing values of $\alpha$ they bothreaches 100 \%, \CMA\  having a slight advantage over its non-monotone counterpart.}

{Conversely, L-BFGS is far away and it solves around one third of the small-size dataset problems and less than 20 \% of the large-size ones, which is an expected result.}
{Indeed, despite its faster theoretical convergence, L-BFGS iteration often requires a large number of function evaluations to become inefficient against mini-batch algorithms.}
For {decreasing} values of $\tau=10^{-2}$ and $\tau=10^{-4}$, {this behaviour is even more evident.}
{When high-precision on large-size dataset is required (see \ref{fig:f}), L-BFGS never happens to be the best solver.}
{It is interesting to notice that \CMA\ and \NMCMA\, despite requiring at least one evaluation of the entire objective function (i.e., on the whole dataset) per iteration, always outperform IG, which never requires to compute the full $f(\omega^k)$.}
{This suggests that checking the quality of the trial direction can in practice speed up the convergence and leading faster to a stationary point.}
{When dealing with larger problems, the advantage of \CMA\ and \NMCMA\ over IG is slightly reduced by the computational burden of evaluating $f(\omega^k)$, but even in that case they both seem to scale up keeping their advantage.}

\paragraph{Reproducibility.} The code implemented for the numerical results presented in this paper can be found at

\noindent
\url{https://github.com/corradocoppola97/CMA_v2}

\section{Conclusions}\label{sec:CMA_conclusion}
In conclusion, in this paper, we have introduced a novel class of optimization algorithms that leverage mini-batch iterations and batch {derivative-free} linesearch procedures to define {two} convergent frameworks with very mild assumptions but {still maintaining} good numerical performance. The assumptions  needed to prove convergence are standard ones in optimization, allowing us to prove convergence of Random Reshuffling and Incremental Gradient methods  under very mild assumptions with a minimal extra cost to pay represented by a few objective function evaluations. The algorithm encompasses an automatic {control} on the stepsize in each epoch {which provides a controlled updating strategy of the stepsize, leading to no } unneeded fast reductions.
Numerical results suggest that the proposed methods do not deteriorate the performance and they are instead effective in improving the solutions obtained, leading to {good} values of the objective function {since the first iterations}.

\section{Appendix}

\textbf{Proof of Lemma \ref{prop:Lipschitz}}
\begin{proof} 
For the sake of simplicity {we consider the permutation $I^k=\{1\dots P\}$.}
From (\ref{eq:basicIG}), we have that, $\widetilde w_1 = \widetilde w_0 - \zeta \nabla f_{1}(\widetilde w_0)$.
Hence, 
we obtain
\begin{equation}\label{boundnorm_10_0}
\|\widetilde w_1 - \widetilde w_0\| = \zeta\|\nabla f_{1}(\widetilde w_0)\| 
\end{equation}
and  \eqref{boundnorm_i_noK_2} holds for $i=1$.

Now, using \eqref{eq:basicIG2} 
and the Lipschitz assumption we can write for all $i$
$$\begin{array}{rl}
     \|\widetilde w_i - \widetilde w_0\|&=\displaystyle\left\| -\zeta \sum_{j=1}^i \nabla f_j(\widetilde w_{j-1})\right\|  \\
    & = \displaystyle\zeta\left\|  \sum_{j=1}^i \left(\nabla f_j(\widetilde w_{j-1})-\nabla f_j(\widetilde w_0)+\nabla f_j(\widetilde w_0)\right)\right\|  \\
    & \le \displaystyle\zeta\sum_{j=1}^i\left\|   \nabla f_j(\widetilde w_{j-1})-\nabla f_j(\widetilde w_0)+\nabla f_j(\widetilde w_0)\right\|  \\
    & \le \displaystyle\zeta\sum_{j=1}^i \left(\left\|  \nabla f_j(\widetilde w_{j-1})-\nabla f_j(\widetilde w_0)\right\|+\left\|\nabla f_j(\widetilde w_0)\right\| \right) \\ 
    & \le \displaystyle\zeta \sum_{j=1}^i\left(L \left\|  \widetilde w_{j-1}-\widetilde w_0\right\|+\left\|\nabla f_j(\widetilde w_0)\right\| \right) \\
\end{array}
$$

which concludes the proof.
\end{proof}

\begin{lemma}[Lemma 1 from \cite{grippo2007}]\label{lemma1[19]}
Assume that a constant $L^\prime>0$ exists such that 
$$\left|f(u)-f(v)\right|\le L^\prime \|u-v\|. $$
Let $\{w^k\}$ be a sequence such that
\begin{equation}\label{lemma1:eq1}
   f(w^{k+1}) \leq R^k - \sigma(\|w^{k+1}-w^k\|)
\end{equation}
where $\sigma:\Re^+\to\Re^+$ is a forcing function and $R^k$ is a reference value that satisfies
\begin{equation}\label{lemma1:eq2}
f(w^k) \leq R^k \leq \max_{0\leq j \leq \min\{k,M\}}\{f(w^{k-j})\},
\end{equation}
for a given integer $M\ge 0$. Suppose that $f$ is bounded below, and that it is Lipschitz continuous on ${\cal L}^0$, that is, there exists a constant $L$ such that
\begin{equation}\label{lemma1:eq3}
|f(x)-f(y)| \leq L \|x-y\|,\quad\mbox{for all},\ x,y\in{\cal L}^0.
\end{equation}
Furthermore, for each $k\ge 0$, let $\ell(k)$ be an integer such that $k-\min\{k,M\}\le\ell(k)\le k$ and that
\[
  f(w^{\ell(k)}) = \max_{0\leq j \leq \min\{k,M\}}\{f(w^{k-j})\}.
\]
Then, we have:
\begin{itemize}
    \item[i)] $w^k\in {\cal L}^0$ for all $k$;
    \item[ii)] $f(w^{k+1}) \leq f(w^{\ell(k)}) - \sigma(\|w^{k+1}-w^k\|)$;
    \item[iii)] $\lim_{k\to\infty}\|w^k-w^{\widehat\ell(k)}\| = 0$, where $\widehat\ell(k) = \ell(k+M+1)$;
    \item[iv)] the sequence $\{f(w^k)\}$ is convergent;
    \item[v)] $\lim_{k\to\infty}\|w^{k+1}-w^k\|=0$.
\end{itemize}
\end{lemma}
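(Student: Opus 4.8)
Looking at the final statement, it's Lemma 1 from Grippo 2007, a technical lemma about sequences satisfying a nonmonotone sufficient-decrease condition. Let me plan a proof of this specific lemma with its five items (i–v).

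The plan is to follow the classical Grippo--Lampariello--Lucidi argument, disposing first of the bookkeeping items (i)--(ii) and then building the limits (iii)--(v) on top of a single monotone scalar sequence. Item (ii) is immediate: since by hypothesis $R^k \le \max_{0\le j\le\min\{k,M\}}\{f(w^{k-j})\} = f(w^{\ell(k)})$, the decrease condition \eqref{lemma1:eq1} yields at once $f(w^{k+1}) \le f(w^{\ell(k)}) - \sigma(\|w^{k+1}-w^k\|)$. The key preliminary I would then establish is that the sequence of windowed maxima $\{f(w^{\ell(k)})\}$ is non-increasing.

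To prove this, I would expand $f(w^{\ell(k+1)}) = \max\{f(w^{k+1}),f(w^k),\dots\}$ over the window of length $\min\{k+1,M\}+1$: every term with index $\le k$ is bounded by $f(w^{\ell(k)})$ by definition of the previous window, while the new term $f(w^{k+1})$ is bounded by $f(w^{\ell(k)})$ thanks to (ii); hence $f(w^{\ell(k+1)}) \le f(w^{\ell(k)})$. Being non-increasing and bounded below, $\{f(w^{\ell(k)})\}$ converges to some $R^*$. Item (i) then follows because $f(w^k) \le f(w^{\ell(k)}) \le f(w^{\ell(0)}) = f(w^0)$, so every iterate lies in ${\cal L}^0$.

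The core of the argument is an induction on $j\ge 1$ showing both $\lim_{k\to\infty}\|w^{\ell(k)}-w^{\ell(k)-j}\| = 0$ and $\lim_{k\to\infty} f(w^{\ell(k)-j}) = R^*$. For the base case $j=1$ I would apply (ii) at the index $\ell(k)-1$, namely $f(w^{\ell(k)}) \le f(w^{\ell(\ell(k)-1)}) - \sigma(\|w^{\ell(k)}-w^{\ell(k)-1}\|)$; both outer terms are values of the convergent sequence $\{f(w^{\ell(m)})\}$ at indices tending to infinity and hence tend to $R^*$, so the forcing-function property forces $\|w^{\ell(k)}-w^{\ell(k)-1}\|\to 0$. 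The inductive step is where the Lipschitz hypothesis \eqref{lemma1:eq3} is indispensable: the hypothesis for $j$ gives $\|w^{\ell(k)}-w^{\ell(k)-j}\|\to 0$, whence $|f(w^{\ell(k)-j})-f(w^{\ell(k)})| \le L\|w^{\ell(k)}-w^{\ell(k)-j}\| \to 0$ and so $f(w^{\ell(k)-j})\to R^*$; applying (ii) once more at $\ell(k)-j-1$ and invoking the forcing function gives $\|w^{\ell(k)-j}-w^{\ell(k)-j-1}\|\to 0$, and the triangle inequality closes the step for $j+1$. I expect this induction --- transferring the convergence of $f$ across the entire window by Lipschitz continuity --- to be the main obstacle.

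The three limits then follow quickly. For (iii), since $\widehat\ell(k)=\ell(k+M+1)$ satisfies $k+1\le\widehat\ell(k)\le k+M+1$, the offset $\widehat\ell(k)-k$ lies in the finite set $\{1,\dots,M+1\}$, so $\|w^k-w^{\widehat\ell(k)}\|$ is dominated by $\max_{1\le j\le M+1}\|w^{\ell(k+M+1)}-w^{\ell(k+M+1)-j}\|$, which tends to $0$ by the induction. For (iv), Lipschitz continuity gives $|f(w^k)-f(w^{\widehat\ell(k)})|\le L\|w^k-w^{\widehat\ell(k)}\|\to 0$ while $f(w^{\widehat\ell(k)})\to R^*$, so $\{f(w^k)\}$ converges to $R^*$. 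Finally, rearranging (ii) as $\sigma(\|w^{k+1}-w^k\|) \le f(w^{\ell(k)})-f(w^{k+1})$, whose right-hand side tends to $R^*-R^*=0$, the forcing-function property yields $\|w^{k+1}-w^k\|\to 0$, which is (v).
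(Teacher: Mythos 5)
Your proof is correct and follows essentially the same route as the paper's: establish (ii) and the monotonicity of the windowed maxima $\{f(w^{\ell(k)})\}$ (hence (i) and a limit $R^*$), run an induction on the offset $j\le M+1$ that alternates the forcing-function property with the Lipschitz transfer of $f$-convergence, and then read off (iii)--(v). The only cosmetic difference is that you carry the cumulative distances $\|w^{\ell(k)}-w^{\ell(k)-j}\|$ inside the induction via the triangle inequality, whereas the paper proves the consecutive gaps $\|w^{\ell(k)-j+1}-w^{\ell(k)-j}\|\to 0$ and telescopes them at the end in its sum (\ref{grippo2007:eq0}).
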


\begin{proof}
For each $k \ge 0$, let $\ell(k)$ be an integer such that $k-\min(k,M) \le \ell(k) \le k$ and
that
$$f (w^{\ell(k)}) = \max_{0\le j\le \min(k,M)}
[f (w^{k-j} )].$$
Then, using \ref{lemma1:eq1} we get  
\begin{equation}\label{eq:lemma1_1}
f (w^{k+1}) \le  f (w^{\ell(k)})-\sigma \left (\left\|w^{k+1}-w^k\right\|\right).    
\end{equation}
which proves assertion ii).
As a first step we show that the sequence $\{f(w^{\ell(k)})\}$ is non increasing. Indeed, consider  the index $k+1-\min(k+1,M) \le \ell(k+1) \le k+1$ and using 
$\min(k+1,M)\le\min(k,M)+1 $, we get
 $$\begin{array}{l}
 \displaystyle
 f (w^{\ell(k+1)})= \max_{0\le j\le \min(k+1,M)}
[f (w^{k+1-j} )]\le  \max_{0\le j\le \min(k,M)+1}
[f (w^{k+1-j} )] =\\\displaystyle
=\max\left\{f (w^{\ell(k)}), f (w^{k+1})\right\}=f (w^{\ell(k)})\end{array}
 $$
where the last equality follows from \ref{eq:lemma1_1}.

As $\{f(w^{\ell(k)})\}$ is non increasing, and $w^{\ell(0)}=w^0$, we get also that $f(w^k)\le f(w^0)$, so that $w^k\in {\cal L}^0$ for all $k$ which proves assertion i). Further the non increasing $\{f(w^{\ell(k)})\}$ sequence admits a limit for $k\to \infty$. 

Now we observe that if iii) holds then we easily get iv) and v). Indeed by the Lipschitz assumption written on points $w^k$ and $w^{\ell(k)}$ and the fact that $\{f(w^{\ell(k)})\}$ sequence admits a limit we get
$$\lim_{k\to \infty} f(w^k)= \lim_{k\to \infty}  f(w^{\widehat\ell(k)})= \lim_{k\to \infty}  f(w^{\ell(k+M+1)})= \lim_{k\to \infty}  f(w^{\ell(k)}) $$
which proves iv). Then by assumptions \eqref{lemma1:eq1} and \eqref{lemma1:eq2} we get also v).

Thus, it remains to prove iii). 
We can write

$$w^{\widehat\ell(k)} =  \left(w^{\widehat\ell(k)}-w^{\widehat\ell(k)-1}\right) +
\left(w^{\widehat\ell(k)-1}-w^{\widehat\ell(k)-2}\right) +\dots+
\left(w^{k+1}-w^k\right)+w^k$$
so that

\begin{equation}
    \label{grippo2007:eq0}
\lim_{k\to \infty}\|w^{\widehat\ell(k)}-w^k\|=\lim_{k\to \infty}\left\|\sum_{k=1}^{\widehat\ell(k)-k} \left(
w^{\widehat\ell(k)-j+1}-w^{\widehat\ell(k)-j}
\right)
\right\|    
\end{equation}

where $\widehat\ell(k)-k\le M+1$.
We  prove by induction on $1\le j\le M+1$ that 
\begin{equation}\label{grippo2007:eq1}
\lim_{k\to \infty} \left\|
w^{\ell(k)-j+1}-w^{\ell(k)-j}
\right\| =0.    
\end{equation}
Let consider the ii) written for $k=\ell(k)-j-1$ we have
\begin{equation}
\label{grippo2007:eq3}
f(w^{\ell(k)-j+1})\le f(w^{\ell(\ell(k)-j)})-\sigma \left(\left\|w^{\ell(k)-j+1}-w^{\ell(k)-j}  \right\|\right) 
\end{equation}

Let $j=1$, taking the limit, using   $f(w^{\ell(k)})-f(w^{\ell(\ell(k)-1)})\to 0$ and the definition of forcing function, we get
$$\lim_{k\to \infty} \left\|
w^{\ell(k)}-w^{\ell(k)-1}
\right\| =0$$
which is iii) with $j=1$. Using the Lipschitz continuity, we also get for $j=1$ that
\begin{equation}
    \label{grippo2007:eq2}
\lim_{k\to \infty} f(w^{\ell(k)-j})=\lim_{k\to \infty} f(w^{\ell(k)})    
\end{equation}
Now we assume that \eqref{grippo2007:eq1} \eqref{grippo2007:eq2} hold for $j$ and we prove that they are valid for $j+1$.

By  \eqref{grippo2007:eq3} with $j+1$,  using \eqref{grippo2007:eq2} and the definition of forcing function we get
$$\lim_{k\to \infty} \left\|
w^{\ell(k)-j}-w^{\ell(k)-j-1}
\right\| =0$$
Now using the Lipschitz continuity and
\eqref{grippo2007:eq2} we get that \eqref{grippo2007:eq2} holds for $j+1$ too. This terminates the induction.

Now from \eqref{grippo2007:eq0} we get 
$$
\lim_{k\to \infty}\|w^{\widehat\ell(k)}-w^k\|=0$$
\end{proof}
\section*{Acknowledgement} Authors wish to thank Peter 
 Richt{\'a}rik for pointing out some relevant references.

\section*{Data availability statement}

The data that support the findings of this paper are available directly by reproducing them on the public Github repository \url{https://github.com/corradocoppola97/CMA_v2}. 
Furthermore, for each run performed on our hardware, a \texttt{.txt} file with the history log is available and can be provided by the corresponding author.

\section*{Conflicts of interest statement}

The authors certify that they have NO affiliations with or involvement in any
organization or entity with any financial interest (such as honoraria; educational grants; participation in speakers’ bureaus;
membership, employment, consultancies, stock ownership, or other equity interest; and expert testimony or patent-licensing
arrangements), or non-financial interest (such as personal or professional relationships, affiliations, knowledge or beliefs) in
the subject matter or materials discussed in this manuscript.

\newpage
\bibliography{sn-bibliography}


\begin{thebibliography}{67}
\ifx \bisbn   \undefined \def \bisbn  #1{ISBN #1}\fi
\ifx \binits  \undefined \def \binits#1{#1}\fi
\ifx \bauthor  \undefined \def \bauthor#1{#1}\fi
\ifx \batitle  \undefined \def \batitle#1{#1}\fi
\ifx \bjtitle  \undefined \def \bjtitle#1{#1}\fi
\ifx \bvolume  \undefined \def \bvolume#1{\textbf{#1}}\fi
\ifx \byear  \undefined \def \byear#1{#1}\fi
\ifx \bissue  \undefined \def \bissue#1{#1}\fi
\ifx \bfpage  \undefined \def \bfpage#1{#1}\fi
\ifx \blpage  \undefined \def \blpage #1{#1}\fi
\ifx \burl  \undefined \def \burl#1{\textsf{#1}}\fi
\ifx \doiurl  \undefined \def \doiurl#1{\url{https://doi.org/#1}}\fi
\ifx \betal  \undefined \def \betal{\textit{et al.}}\fi
\ifx \binstitute  \undefined \def \binstitute#1{#1}\fi
\ifx \binstitutionaled  \undefined \def \binstitutionaled#1{#1}\fi
\ifx \bctitle  \undefined \def \bctitle#1{#1}\fi
\ifx \beditor  \undefined \def \beditor#1{#1}\fi
\ifx \bpublisher  \undefined \def \bpublisher#1{#1}\fi
\ifx \bbtitle  \undefined \def \bbtitle#1{#1}\fi
\ifx \bedition  \undefined \def \bedition#1{#1}\fi
\ifx \bseriesno  \undefined \def \bseriesno#1{#1}\fi
\ifx \blocation  \undefined \def \blocation#1{#1}\fi
\ifx \bsertitle  \undefined \def \bsertitle#1{#1}\fi
\ifx \bsnm \undefined \def \bsnm#1{#1}\fi
\ifx \bsuffix \undefined \def \bsuffix#1{#1}\fi
\ifx \bparticle \undefined \def \bparticle#1{#1}\fi
\ifx \barticle \undefined \def \barticle#1{#1}\fi
\bibcommenthead
\ifx \bconfdate \undefined \def \bconfdate #1{#1}\fi
\ifx \botherref \undefined \def \botherref #1{#1}\fi
\ifx \url \undefined \def \url#1{\textsf{#1}}\fi
\ifx \bchapter \undefined \def \bchapter#1{#1}\fi
\ifx \bbook \undefined \def \bbook#1{#1}\fi
\ifx \bcomment \undefined \def \bcomment#1{#1}\fi
\ifx \oauthor \undefined \def \oauthor#1{#1}\fi
\ifx \citeauthoryear \undefined \def \citeauthoryear#1{#1}\fi
\ifx \endbibitem  \undefined \def \endbibitem {}\fi
\ifx \bconflocation  \undefined \def \bconflocation#1{#1}\fi
\ifx \arxivurl  \undefined \def \arxivurl#1{\textsf{#1}}\fi
\csname PreBibitemsHook\endcsname

\bibitem[\protect\citeauthoryear{Mayne et~al.}{1981}]{mayne1981solving}
\begin{barticle}
\bauthor{\bsnm{Mayne}, \binits{D.Q.}},
\bauthor{\bsnm{Polak}, \binits{E.}},
\bauthor{\bsnm{Heunis}, \binits{A.}}:
\batitle{Solving nonlinear inequalities in a finite number of iterations}.
\bjtitle{Journal of Optimization Theory and Applications}
\bvolume{33}(\bissue{2}),
\bfpage{207}--\blpage{221}
(\byear{1981})
\end{barticle}
\endbibitem

\bibitem[\protect\citeauthoryear{Zhang and Huang}{2010}]{zhang2010nonmonotone}
\begin{barticle}
\bauthor{\bsnm{Zhang}, \binits{Y.}},
\bauthor{\bsnm{Huang}, \binits{Z.-H.}}:
\batitle{A nonmonotone smoothing-type algorithm for solving a system of equalities and inequalities}.
\bjtitle{Journal of Computational and Applied Mathematics}
\bvolume{233}(\bissue{9}),
\bfpage{2312}--\blpage{2321}
(\byear{2010})
\end{barticle}
\endbibitem

\bibitem[\protect\citeauthoryear{Bottou et~al.}{2018}]{Nocedal}
\begin{barticle}
\bauthor{\bsnm{Bottou}, \binits{L.}},
\bauthor{\bsnm{Curtis}, \binits{F.E.}},
\bauthor{\bsnm{Nocedal}, \binits{J.}}:
\batitle{Optimization methods for large-scale machine learning}.
\bjtitle{SIAM Review}
\bvolume{60}(\bissue{2}),
\bfpage{223}--\blpage{311}
(\byear{2018})
\end{barticle}
\endbibitem

\bibitem[\protect\citeauthoryear{Bollapragada et~al.}{2018}]{bollapragada2018adaptive}
\begin{barticle}
\bauthor{\bsnm{Bollapragada}, \binits{R.}},
\bauthor{\bsnm{Byrd}, \binits{R.}},
\bauthor{\bsnm{Nocedal}, \binits{J.}}:
\batitle{Adaptive sampling strategies for stochastic optimization}.
\bjtitle{SIAM Journal on Optimization}
\bvolume{28}(\bissue{4}),
\bfpage{3312}--\blpage{3343}
(\byear{2018})
\end{barticle}
\endbibitem

\bibitem[\protect\citeauthoryear{Bertsekas}{2011}]{bertsekas2011incremental}
\begin{barticle}
\bauthor{\bsnm{Bertsekas}, \binits{D.P.}}:
\batitle{Incremental gradient, subgradient, and proximal methods for convex optimization: A survey}.
\bjtitle{Optimization for Machine Learning}
\bvolume{2010}(\bissue{1-38}),
\bfpage{3}
(\byear{2011})
\end{barticle}
\endbibitem

\bibitem[\protect\citeauthoryear{Shalev-Shwartz and Ben-David}{2014}]{shalev2014understanding}
\begin{bbook}
\bauthor{\bsnm{Shalev-Shwartz}, \binits{S.}},
\bauthor{\bsnm{Ben-David}, \binits{S.}}:
\bbtitle{Understanding Machine Learning: From Theory to Algorithms}.
\bpublisher{Cambridge University Press}, \blocation{???}
(\byear{2014})
\end{bbook}
\endbibitem

\bibitem[\protect\citeauthoryear{Nocedal and Wright}{2006}]{NoceWrig06}
\begin{botherref}
\oauthor{\bsnm{Nocedal}, \binits{J.}},
\oauthor{\bsnm{Wright}, \binits{S.J.}}:
Numerical optimization,
2nd edn.
Springer,
New York, NY, USA
(2006)
\end{botherref}
\endbibitem

\bibitem[\protect\citeauthoryear{Bertsekas}{2016}]{bertsekasbook2016}
\begin{botherref}
\oauthor{\bsnm{Bertsekas}, \binits{D.P.}}:
Nonlinear programming,
third edition edn.
Scientific, Athena
(2016)
\end{botherref}
\endbibitem

\bibitem[\protect\citeauthoryear{Keskar et~al.}{2016}]{keskar2016large}
\begin{bchapter}
\bauthor{\bsnm{Keskar}, \binits{N.S.}},
\bauthor{\bsnm{Mudigere}, \binits{D.}},
\bauthor{\bsnm{Nocedal}, \binits{J.}},
\bauthor{\bsnm{Smelyanskiy}, \binits{M.}},
\bauthor{\bsnm{Tang}, \binits{P.T.P.}}:
\bctitle{On Large-batch Training for Deep Learning: Generalization Gap and Sharp Minima}.
(\byear{2016}).
\bcomment{ICLR 2017}
\end{bchapter}
\endbibitem

\bibitem[\protect\citeauthoryear{Hochreiter and Schmidhuber}{1997}]{hochreiter1997flat}
\begin{barticle}
\bauthor{\bsnm{Hochreiter}, \binits{S.}},
\bauthor{\bsnm{Schmidhuber}, \binits{J.}}:
\batitle{Flat minima}.
\bjtitle{Neural computation}
\bvolume{9}(\bissue{1}),
\bfpage{1}--\blpage{42}
(\byear{1997})
\end{barticle}
\endbibitem

\bibitem[\protect\citeauthoryear{Robbins and Monro}{1951}]{Robbins&Monro:1951}
\begin{barticle}
\bauthor{\bsnm{Robbins}, \binits{H.}},
\bauthor{\bsnm{Monro}, \binits{S.}}:
\batitle{A stochastic approximation method}.
\bjtitle{Annals of Mathematical Statistics}
\bvolume{22},
\bfpage{400}--\blpage{407}
(\byear{1951})
\end{barticle}
\endbibitem

\bibitem[\protect\citeauthoryear{Grippo}{1994}]{grippo1994class}
\begin{barticle}
\bauthor{\bsnm{Grippo}, \binits{L.}}:
\batitle{A class of unconstrained minimization methods for neural network training}.
\bjtitle{Optimization Methods and Software}
\bvolume{4}(\bissue{2}),
\bfpage{135}--\blpage{150}
(\byear{1994})
\end{barticle}
\endbibitem

\bibitem[\protect\citeauthoryear{Luo}{1991}]{luo1991convergence}
\begin{barticle}
\bauthor{\bsnm{Luo}, \binits{Z.-Q.}}:
\batitle{On the convergence of the lms algorithm with adaptive learning rate for linear feedforward networks}.
\bjtitle{Neural Computation}
\bvolume{3}(\bissue{2}),
\bfpage{226}--\blpage{245}
(\byear{1991})
\end{barticle}
\endbibitem

\bibitem[\protect\citeauthoryear{Mishchenko et~al.}{2020}]{NEURIPS2020_c8cc6e90}
\begin{bchapter}
\bauthor{\bsnm{Mishchenko}, \binits{K.}},
\bauthor{\bsnm{Khaled}, \binits{A.}},
\bauthor{\bsnm{Richtarik}, \binits{P.}}:
\bctitle{Random reshuffling: Simple analysis with vast improvements}.
In: \beditor{\bsnm{Larochelle}, \binits{H.}},
\beditor{\bsnm{Ranzato}, \binits{M.}},
\beditor{\bsnm{Hadsell}, \binits{R.}},
\beditor{\bsnm{Balcan}, \binits{M.F.}},
\beditor{\bsnm{Lin}, \binits{H.}} (eds.)
\bbtitle{Advances in Neural Information Processing Systems},
vol. \bseriesno{33},
pp. \bfpage{17309}--\blpage{17320}.
\bpublisher{Curran Associates, Inc.}, \blocation{???}
(\byear{2020}).
\burl{https://proceedings.neurips.cc/paper/2020/file/c8cc6e90ccbff44c9cee23611711cdc4-Paper.pdf}
\end{bchapter}
\endbibitem

\bibitem[\protect\citeauthoryear{Bengio}{2012}]{bengio2012practical}
\begin{bchapter}
\bauthor{\bsnm{Bengio}, \binits{Y.}}:
\bctitle{Practical recommendations for gradient-based training of deep architectures}.
In: \bbtitle{Neural Networks: Tricks of the Trade},
pp. \bfpage{437}--\blpage{478}.
\bpublisher{Springer}, \blocation{???}
(\byear{2012})
\end{bchapter}
\endbibitem

\bibitem[\protect\citeauthoryear{Bottou}{2012}]{Bottou2012}
\begin{bbook}
\bauthor{\bsnm{Bottou}, \binits{L.}}:
In: \beditor{\bsnm{Montavon}, \binits{G.}},
\beditor{\bsnm{Orr}, \binits{G.B.}},
\beditor{\bsnm{M{\"u}ller}, \binits{K.-R.}} (eds.)
\bbtitle{Stochastic Gradient Descent Tricks},
pp. \bfpage{421}--\blpage{436}.
\bpublisher{Springer},
\blocation{Berlin, Heidelberg}
(\byear{2012}).
\doiurl{10.1007/978-3-642-35289-8_25} .
\burl{https://doi.org/10.1007/978-3-642-35289-8_25}
\end{bbook}
\endbibitem

\bibitem[\protect\citeauthoryear{Shamir}{2016}]{shamir2016without}
\begin{botherref}
\oauthor{\bsnm{Shamir}, \binits{O.}}:
Without-replacement sampling for stochastic gradient methods.
Advances in neural information processing systems
\textbf{29}
(2016)
\end{botherref}
\endbibitem

\bibitem[\protect\citeauthoryear{Cannelli et~al.}{2020}]{cannelli2020asynchronous}
\begin{barticle}
\bauthor{\bsnm{Cannelli}, \binits{L.}},
\bauthor{\bsnm{Facchinei}, \binits{F.}},
\bauthor{\bsnm{Kungurtsev}, \binits{V.}},
\bauthor{\bsnm{Scutari}, \binits{G.}}:
\batitle{Asynchronous parallel algorithms for nonconvex optimization}.
\bjtitle{Mathematical Programming}
\bvolume{184}(\bissue{1}),
\bfpage{121}--\blpage{154}
(\byear{2020})
\end{barticle}
\endbibitem

\bibitem[\protect\citeauthoryear{Goodfellow et~al.}{2016}]{Goodfellow-et-al-2016}
\begin{botherref}
\oauthor{\bsnm{Goodfellow}, \binits{I.}},
\oauthor{\bsnm{Bengio}, \binits{Y.}},
\oauthor{\bsnm{Courville}, \binits{A.}}:
Deep learning.
MIT Press
(2016).
url{http://www.deeplearningbook.org}
\end{botherref}
\endbibitem

\bibitem[\protect\citeauthoryear{Johnson and Zhang}{2013}]{NIPS2013_4937}
\begin{bchapter}
\bauthor{\bsnm{Johnson}, \binits{R.}},
\bauthor{\bsnm{Zhang}, \binits{T.}}:
\bctitle{Accelerating stochastic gradient descent using predictive variance reduction}.
In: \bbtitle{Advances in Neural Information Processing Systems},
pp. \bfpage{315}--\blpage{323}
(\byear{2013})
\end{bchapter}
\endbibitem

\bibitem[\protect\citeauthoryear{Roux et~al.}{2012}]{roux2012stochastic}
\begin{bchapter}
\bauthor{\bsnm{Roux}, \binits{N.L.}},
\bauthor{\bsnm{Schmidt}, \binits{M.}},
\bauthor{\bsnm{Bach}, \binits{F.R.}}:
\bctitle{A stochastic gradient method with an exponential convergence \_rate for finite training sets}.
In: \bbtitle{Advances in Neural Information Processing Systems},
pp. \bfpage{2663}--\blpage{2671}
(\byear{2012})
\end{bchapter}
\endbibitem

\bibitem[\protect\citeauthoryear{Kingma and Ba}{2014}]{Konur2013}
\begin{botherref}
\oauthor{\bsnm{Kingma}, \binits{D.P.}},
\oauthor{\bsnm{Ba}, \binits{J.}}:
Adam: A method for stochastic optimization.
CoRR
\textbf{abs/1412.6980}
(2014)
\end{botherref}
\endbibitem

\bibitem[\protect\citeauthoryear{Blatt et~al.}{2007}]{blatt2007convergent}
\begin{barticle}
\bauthor{\bsnm{Blatt}, \binits{D.}},
\bauthor{\bsnm{Hero}, \binits{A.O.}},
\bauthor{\bsnm{Gauchman}, \binits{H.}}:
\batitle{A convergent incremental gradient method with a constant step size}.
\bjtitle{SIAM Journal on Optimization}
\bvolume{18}(\bissue{1}),
\bfpage{29}--\blpage{51}
(\byear{2007})
\end{barticle}
\endbibitem

\bibitem[\protect\citeauthoryear{Schmidt et~al.}{2017}]{schmidt2017minimizing}
\begin{barticle}
\bauthor{\bsnm{Schmidt}, \binits{M.}},
\bauthor{\bsnm{Le~Roux}, \binits{N.}},
\bauthor{\bsnm{Bach}, \binits{F.}}:
\batitle{Minimizing finite sums with the stochastic average gradient}.
\bjtitle{Mathematical Programming}
\bvolume{162}(\bissue{1-2}),
\bfpage{83}--\blpage{112}
(\byear{2017})
\end{barticle}
\endbibitem

\bibitem[\protect\citeauthoryear{De et~al.}{2017}]{de2017automated}
\begin{bchapter}
\bauthor{\bsnm{De}, \binits{S.}},
\bauthor{\bsnm{Yadav}, \binits{A.}},
\bauthor{\bsnm{Jacobs}, \binits{D.}},
\bauthor{\bsnm{Goldstein}, \binits{T.}}:
\bctitle{Automated inference with adaptive batches}.
In: \bbtitle{Artificial Intelligence and Statistics},
pp. \bfpage{1504}--\blpage{1513}
(\byear{2017})
\end{bchapter}
\endbibitem

\bibitem[\protect\citeauthoryear{Friedlander and Schmidt}{2012}]{friedlander2012hybrid}
\begin{barticle}
\bauthor{\bsnm{Friedlander}, \binits{M.P.}},
\bauthor{\bsnm{Schmidt}, \binits{M.}}:
\batitle{Hybrid deterministic-stochastic methods for data fitting}.
\bjtitle{SIAM Journal on Scientific Computing}
\bvolume{34}(\bissue{3}),
\bfpage{1380}--\blpage{1405}
(\byear{2012})
\end{barticle}
\endbibitem

\bibitem[\protect\citeauthoryear{Byrd et~al.}{2012}]{byrd2012sample}
\begin{barticle}
\bauthor{\bsnm{Byrd}, \binits{R.H.}},
\bauthor{\bsnm{Chin}, \binits{G.M.}},
\bauthor{\bsnm{Nocedal}, \binits{J.}},
\bauthor{\bsnm{Wu}, \binits{Y.}}:
\batitle{Sample size selection in optimization methods for machine learning}.
\bjtitle{Mathematical programming}
\bvolume{134}(\bissue{1}),
\bfpage{127}--\blpage{155}
(\byear{2012})
\end{barticle}
\endbibitem

\bibitem[\protect\citeauthoryear{Khaled and Richt{\'a}rik}{2020}]{khaled2020better}
\begin{botherref}
\oauthor{\bsnm{Khaled}, \binits{A.}},
\oauthor{\bsnm{Richt{\'a}rik}, \binits{P.}}:
Better theory for {SGD} in the nonconvex world.
arXiv preprint arXiv:2002.03329
(2020)
\end{botherref}
\endbibitem

\bibitem[\protect\citeauthoryear{Defazio et~al.}{2014}]{NIPS2014_5258}
\begin{bchapter}
\bauthor{\bsnm{Defazio}, \binits{A.}},
\bauthor{\bsnm{Bach}, \binits{F.}},
\bauthor{\bsnm{Lacoste-Julien}, \binits{S.}}:
\bctitle{Saga: A fast incremental gradient method with support for non-strongly convex composite objectives}.
In: \bbtitle{Advances in Neural Information Processing Systems},
pp. \bfpage{1646}--\blpage{1654}
(\byear{2014})
\end{bchapter}
\endbibitem

\bibitem[\protect\citeauthoryear{Gower et~al.}{2021a}]{gower2021sgd}
\begin{bchapter}
\bauthor{\bsnm{Gower}, \binits{R.}},
\bauthor{\bsnm{Sebbouh}, \binits{O.}},
\bauthor{\bsnm{Loizou}, \binits{N.}}:
\bctitle{{SGD} for structured nonconvex functions: Learning rates, minibatching and interpolation}.
In: \bbtitle{International Conference on Artificial Intelligence and Statistics},
pp. \bfpage{1315}--\blpage{1323}
(\byear{2021}).
\bcomment{PMLR}
\end{bchapter}
\endbibitem

\bibitem[\protect\citeauthoryear{Gower et~al.}{2021b}]{gower2021stochastic}
\begin{barticle}
\bauthor{\bsnm{Gower}, \binits{R.M.}},
\bauthor{\bsnm{Richt{\'a}rik}, \binits{P.}},
\bauthor{\bsnm{Bach}, \binits{F.}}:
\batitle{Stochastic quasi-gradient methods: Variance reduction via jacobian sketching}.
\bjtitle{Mathematical Programming}
\bvolume{188}(\bissue{1}),
\bfpage{135}--\blpage{192}
(\byear{2021})
\end{barticle}
\endbibitem

\bibitem[\protect\citeauthoryear{Lei et~al.}{2019}]{lei2019stochastic}
\begin{barticle}
\bauthor{\bsnm{Lei}, \binits{Y.}},
\bauthor{\bsnm{Hu}, \binits{T.}},
\bauthor{\bsnm{Li}, \binits{G.}},
\bauthor{\bsnm{Tang}, \binits{K.}}:
\batitle{Stochastic gradient descent for nonconvex learning without bounded gradient assumptions}.
\bjtitle{IEEE transactions on neural networks and learning systems}
\bvolume{31}(\bissue{10}),
\bfpage{4394}--\blpage{4400}
(\byear{2019})
\end{barticle}
\endbibitem

\bibitem[\protect\citeauthoryear{Chen et~al.}{2018}]{chen2018lag}
\begin{botherref}
\oauthor{\bsnm{Chen}, \binits{T.}},
\oauthor{\bsnm{Giannakis}, \binits{G.}},
\oauthor{\bsnm{Sun}, \binits{T.}},
\oauthor{\bsnm{Yin}, \binits{W.}}:
Lag: Lazily aggregated gradient for communication-efficient distributed learning.
Advances in Neural Information Processing Systems
\textbf{31}
(2018)
\end{botherref}
\endbibitem

\bibitem[\protect\citeauthoryear{Duchi and Ruan}{2018}]{duchi2018stochastic}
\begin{barticle}
\bauthor{\bsnm{Duchi}, \binits{J.C.}},
\bauthor{\bsnm{Ruan}, \binits{F.}}:
\batitle{Stochastic methods for composite and weakly convex optimization problems}.
\bjtitle{SIAM Journal on Optimization}
\bvolume{28}(\bissue{4}),
\bfpage{3229}--\blpage{3259}
(\byear{2018})
\end{barticle}
\endbibitem

\bibitem[\protect\citeauthoryear{Asi and Duchi}{2019}]{asi2019stochastic}
\begin{barticle}
\bauthor{\bsnm{Asi}, \binits{H.}},
\bauthor{\bsnm{Duchi}, \binits{J.C.}}:
\batitle{Stochastic (approximate) proximal point methods: Convergence, optimality, and adaptivity}.
\bjtitle{SIAM Journal on Optimization}
\bvolume{29}(\bissue{3}),
\bfpage{2257}--\blpage{2290}
(\byear{2019})
\end{barticle}
\endbibitem

\bibitem[\protect\citeauthoryear{Nguyen et~al.}{2022}]{nguyen2022finite}
\begin{botherref}
\oauthor{\bsnm{Nguyen}, \binits{L.M.}},
\oauthor{\bsnm{Dijk}, \binits{M.}},
\oauthor{\bsnm{Phan}, \binits{D.T.}},
\oauthor{\bsnm{Nguyen}, \binits{P.H.}},
\oauthor{\bsnm{Weng}, \binits{T.-W.}},
\oauthor{\bsnm{Kalagnanam}, \binits{J.R.}}:
Finite-sum smooth optimization with {SARAH}.
Computational Optimization and Applications,
1--33
(2022)
\end{botherref}
\endbibitem

\bibitem[\protect\citeauthoryear{Nguyen et~al.}{2021}]{nguyen2021unified}
\begin{barticle}
\bauthor{\bsnm{Nguyen}, \binits{L.M.}},
\bauthor{\bsnm{Tran-Dinh}, \binits{Q.}},
\bauthor{\bsnm{Phan}, \binits{D.T.}},
\bauthor{\bsnm{Nguyen}, \binits{P.H.}},
\bauthor{\bsnm{Van~Dijk}, \binits{M.}}:
\batitle{A unified convergence analysis for shuffling-type gradient methods}.
\bjtitle{The Journal of Machine Learning Research}
\bvolume{22}(\bissue{1}),
\bfpage{9397}--\blpage{9440}
(\byear{2021})
\end{barticle}
\endbibitem

\bibitem[\protect\citeauthoryear{Bottou}{2010}]{Bottou10large-scalemachine}
\begin{bchapter}
\bauthor{\bsnm{Bottou}, \binits{L.}}:
\bctitle{Large-scale machine learning with stochastic gradient descent}.
In: \bbtitle{in COMPSTAT}
(\byear{2010})
\end{bchapter}
\endbibitem

\bibitem[\protect\citeauthoryear{Ghadimi and Lan}{2013}]{ghadimi2013stochastic}
\begin{barticle}
\bauthor{\bsnm{Ghadimi}, \binits{S.}},
\bauthor{\bsnm{Lan}, \binits{G.}}:
\batitle{Stochastic first-and zeroth-order methods for nonconvex stochastic programming}.
\bjtitle{SIAM Journal on Optimization}
\bvolume{23}(\bissue{4}),
\bfpage{2341}--\blpage{2368}
(\byear{2013})
\end{barticle}
\endbibitem

\bibitem[\protect\citeauthoryear{Bertsekas and Tsitsiklis}{2000}]{Bertsekas2000}
\begin{barticle}
\bauthor{\bsnm{Bertsekas}, \binits{D.P.}},
\bauthor{\bsnm{Tsitsiklis}, \binits{J.N.}}:
\batitle{{Gradient Convergence in Gradient methods with Errors}}.
\bjtitle{SIAM Journal on Optimization}
\bvolume{10}(\bissue{3}),
\bfpage{627}--\blpage{642}
(\byear{2000})
\doiurl{10.1137/S1052623497331063}
\end{barticle}
\endbibitem

\bibitem[\protect\citeauthoryear{Solodov}{1998}]{solodov1998incremental}
\begin{barticle}
\bauthor{\bsnm{Solodov}, \binits{M.V.}}:
\batitle{Incremental gradient algorithms with stepsizes bounded away from zero}.
\bjtitle{Computational Optimization and Applications}
\bvolume{11}(\bissue{1}),
\bfpage{23}--\blpage{35}
(\byear{1998})
\end{barticle}
\endbibitem

\bibitem[\protect\citeauthoryear{Gurbuzbalaban et~al.}{2017}]{gurbuzbalaban2017convergence}
\begin{barticle}
\bauthor{\bsnm{Gurbuzbalaban}, \binits{M.}},
\bauthor{\bsnm{Ozdaglar}, \binits{A.}},
\bauthor{\bsnm{Parrilo}, \binits{P.A.}}:
\batitle{On the convergence rate of incremental aggregated gradient algorithms}.
\bjtitle{SIAM Journal on Optimization}
\bvolume{27}(\bissue{2}),
\bfpage{1035}--\blpage{1048}
(\byear{2017})
\end{barticle}
\endbibitem

\bibitem[\protect\citeauthoryear{G{\"u}rb{\"u}zbalaban et~al.}{2021}]{gurbuzbalaban2021random}
\begin{barticle}
\bauthor{\bsnm{G{\"u}rb{\"u}zbalaban}, \binits{M.}},
\bauthor{\bsnm{Ozdaglar}, \binits{A.}},
\bauthor{\bsnm{Parrilo}, \binits{P.A.}}:
\batitle{Why random reshuffling beats stochastic gradient descent}.
\bjtitle{Mathematical Programming}
\bvolume{186}(\bissue{1}),
\bfpage{49}--\blpage{84}
(\byear{2021})
\end{barticle}
\endbibitem

\bibitem[\protect\citeauthoryear{Nedic and Bertsekas}{2001}]{nedic2001incremental}
\begin{barticle}
\bauthor{\bsnm{Nedic}, \binits{A.}},
\bauthor{\bsnm{Bertsekas}, \binits{D.P.}}:
\batitle{Incremental subgradient methods for nondifferentiable optimization}.
\bjtitle{SIAM Journal on Optimization}
\bvolume{12}(\bissue{1}),
\bfpage{109}--\blpage{138}
(\byear{2001})
\end{barticle}
\endbibitem

\bibitem[\protect\citeauthoryear{Malinovsky et~al.}{2021}]{malinovsky2021random}
\begin{botherref}
\oauthor{\bsnm{Malinovsky}, \binits{G.}},
\oauthor{\bsnm{Sailanbayev}, \binits{A.}},
\oauthor{\bsnm{Richt{\'a}rik}, \binits{P.}}:
Random reshuffling with variance reduction: New analysis and better rates.
arXiv preprint arXiv:2104.09342
(2021)
\end{botherref}
\endbibitem

\bibitem[\protect\citeauthoryear{Mishchenko et~al.}{2022}]{mishchenko2022proximal}
\begin{bchapter}
\bauthor{\bsnm{Mishchenko}, \binits{K.}},
\bauthor{\bsnm{Khaled}, \binits{A.}},
\bauthor{\bsnm{Richt{\'a}rik}, \binits{P.}}:
\bctitle{Proximal and federated random reshuffling}.
In: \bbtitle{International Conference on Machine Learning},
pp. \bfpage{15718}--\blpage{15749}
(\byear{2022}).
\bcomment{PMLR}
\end{bchapter}
\endbibitem

\bibitem[\protect\citeauthoryear{Sadiev et~al.}{2022}]{sadiev2022federated}
\begin{botherref}
\oauthor{\bsnm{Sadiev}, \binits{A.}},
\oauthor{\bsnm{Malinovsky}, \binits{G.}},
\oauthor{\bsnm{Gorbunov}, \binits{E.}},
\oauthor{\bsnm{Sokolov}, \binits{I.}},
\oauthor{\bsnm{Khaled}, \binits{A.}},
\oauthor{\bsnm{Burlachenko}, \binits{K.}},
\oauthor{\bsnm{Richt{\'a}rik}, \binits{P.}}:
Federated optimization algorithms with random reshuffling and gradient compression.
arXiv preprint arXiv:2206.07021
(2022)
\end{botherref}
\endbibitem

\bibitem[\protect\citeauthoryear{Li et~al.}{2021}]{li2021convergence}
\begin{botherref}
\oauthor{\bsnm{Li}, \binits{X.}},
\oauthor{\bsnm{Milzarek}, \binits{A.}},
\oauthor{\bsnm{Qiu}, \binits{J.}}:
Convergence of random reshuffling under the {K}urdyka-{$\L$}ojasiewicz inequality.
arXiv preprint arXiv:2110.04926
(2021)
\end{botherref}
\endbibitem

\bibitem[\protect\citeauthoryear{Ahn et~al.}{2020}]{ahn2020sgd}
\begin{barticle}
\bauthor{\bsnm{Ahn}, \binits{K.}},
\bauthor{\bsnm{Yun}, \binits{C.}},
\bauthor{\bsnm{Sra}, \binits{S.}}:
\batitle{Sgd with shuffling: optimal rates without component convexity and large epoch requirements}.
\bjtitle{Advances in Neural Information Processing Systems}
\bvolume{33},
\bfpage{17526}--\blpage{17535}
(\byear{2020})
\end{barticle}
\endbibitem

\bibitem[\protect\citeauthoryear{Malinovsky et~al.}{2022}]{malinovsky2022server}
\begin{botherref}
\oauthor{\bsnm{Malinovsky}, \binits{G.}},
\oauthor{\bsnm{Mishchenko}, \binits{K.}},
\oauthor{\bsnm{Richt{\'a}rik}, \binits{P.}}:
Server-side stepsizes and sampling without replacement provably help in federated optimization.
arXiv preprint arXiv:2201.11066
(2022)
\end{botherref}
\endbibitem

\bibitem[\protect\citeauthoryear{Ward et~al.}{2020}]{ward2020adagrad}
\begin{barticle}
\bauthor{\bsnm{Ward}, \binits{R.}},
\bauthor{\bsnm{Wu}, \binits{X.}},
\bauthor{\bsnm{Bottou}, \binits{L.}}:
\batitle{Adagrad stepsizes: Sharp convergence over nonconvex landscapes}.
\bjtitle{The Journal of Machine Learning Research}
\bvolume{21}(\bissue{1}),
\bfpage{9047}--\blpage{9076}
(\byear{2020})
\end{barticle}
\endbibitem

\bibitem[\protect\citeauthoryear{Duchi et~al.}{2011}]{Duchi:2011:ASM:1953048.2021068}
\begin{barticle}
\bauthor{\bsnm{Duchi}, \binits{J.}},
\bauthor{\bsnm{Hazan}, \binits{E.}},
\bauthor{\bsnm{Singer}, \binits{Y.}}:
\batitle{Adaptive subgradient methods for online learning and stochastic optimization}.
\bjtitle{J. Mach. Learn. Res.}
\bvolume{12},
\bfpage{2121}--\blpage{2159}
(\byear{2011})
\end{barticle}
\endbibitem

\bibitem[\protect\citeauthoryear{Seccia}{2020}]{secciaPhD2020}
\begin{botherref}
\oauthor{\bsnm{Seccia}, \binits{R.}}:
Block coordinate incremental methods for training deep neural networks.
PhD thesis,
Sapienza, University of Rome
(2020)
\end{botherref}
\endbibitem

\bibitem[\protect\citeauthoryear{Chamberlain et~al.}{1982}]{chamberlain1982watchdog}
\begin{bchapter}
\bauthor{\bsnm{Chamberlain}, \binits{R.}},
\bauthor{\bsnm{Powell}, \binits{M.}},
\bauthor{\bsnm{Lemarechal}, \binits{C.}},
\bauthor{\bsnm{Pedersen}, \binits{H.}}:
\bctitle{The watchdog technique for forcing convergence in algorithms for constrained optimization}.
In: \bbtitle{Algorithms for Constrained Minimization of Smooth Nonlinear Functions},
pp. \bfpage{1}--\blpage{17}.
\bpublisher{Springer}, \blocation{???}
(\byear{1982})
\end{bchapter}
\endbibitem

\bibitem[\protect\citeauthoryear{Grippo et~al.}{1988}]{grippo1988global}
\begin{barticle}
\bauthor{\bsnm{Grippo}, \binits{L.}},
\bauthor{\bsnm{Lampariello}, \binits{F.}},
\bauthor{\bsnm{Lucidi}, \binits{S.}}:
\batitle{Global convergence and stabilization of unconstrained minimization methods without derivatives}.
\bjtitle{Journal of Optimization Theory and Applications}
\bvolume{56}(\bissue{3}),
\bfpage{385}--\blpage{406}
(\byear{1988})
\end{barticle}
\endbibitem

\bibitem[\protect\citeauthoryear{Lemar{\'e}chal}{1981}]{lemarechal1981view}
\begin{bchapter}
\bauthor{\bsnm{Lemar{\'e}chal}, \binits{C.}}:
\bctitle{A view of line-searches}.
In: \bbtitle{Optimization and Optimal Control},
pp. \bfpage{59}--\blpage{78}.
\bpublisher{Springer}, \blocation{???}
(\byear{1981})
\end{bchapter}
\endbibitem

\bibitem[\protect\citeauthoryear{Lucidi and Sciandrone}{2002}]{lucidi2002global}
\begin{barticle}
\bauthor{\bsnm{Lucidi}, \binits{S.}},
\bauthor{\bsnm{Sciandrone}, \binits{M.}}:
\batitle{On the global convergence of derivative-free methods for unconstrained optimization}.
\bjtitle{SIAM Journal on Optimization}
\bvolume{13}(\bissue{1}),
\bfpage{97}--\blpage{116}
(\byear{2002})
\end{barticle}
\endbibitem

\bibitem[\protect\citeauthoryear{Palagi and Seccia}{2021}]{palagisecciaSOCO2021}
\begin{botherref}
\oauthor{\bsnm{Palagi}, \binits{L.}},
\oauthor{\bsnm{Seccia}, \binits{R.}}:
On the convergence of a block-coordinate incremental gradient method.
Soft Computing,
1--12
(2021)
\end{botherref}
\endbibitem

\bibitem[\protect\citeauthoryear{Grippo et~al.}{1986}]{grippo1986nonmonotone}
\begin{barticle}
\bauthor{\bsnm{Grippo}, \binits{L.}},
\bauthor{\bsnm{Lampariello}, \binits{F.}},
\bauthor{\bsnm{Lucidi}, \binits{S.}}:
\batitle{A nonmonotone line search technique for {N}ewton’s method}.
\bjtitle{SIAM journal on Numerical Analysis}
\bvolume{23}(\bissue{4}),
\bfpage{707}--\blpage{716}
(\byear{1986})
\end{barticle}
\endbibitem

\bibitem[\protect\citeauthoryear{Grippo and Sciandrone}{2007}]{grippo2007}
\begin{barticle}
\bauthor{\bsnm{Grippo}, \binits{L.}},
\bauthor{\bsnm{Sciandrone}, \binits{M.}}:
\batitle{Nonmonotone derivative-free methods for nonlinear equations}.
\bjtitle{Computational Optimization and applications}
\bvolume{37},
\bfpage{297}--\blpage{328}
(\byear{2007})
\end{barticle}
\endbibitem

\bibitem[\protect\citeauthoryear{Alcal{\'a}-Fdez et~al.}{2011}]{keel}
\begin{botherref}
\oauthor{\bsnm{Alcal{\'a}-Fdez}, \binits{J.}},
\oauthor{\bsnm{Fern{\'a}ndez}, \binits{A.}},
\oauthor{\bsnm{Luengo}, \binits{J.}},
\oauthor{\bsnm{Derrac}, \binits{J.}},
\oauthor{\bsnm{Garc{\'\i}a}, \binits{S.}},
\oauthor{\bsnm{S{\'a}nchez}, \binits{L.}},
\oauthor{\bsnm{Herrera}, \binits{F.}}:
Keel data-mining software tool: data set repository, integration of algorithms and experimental analysis framework.
Journal of Multiple-Valued Logic \& Soft Computing
\textbf{17}
(2011)
\end{botherref}
\endbibitem

\bibitem[\protect\citeauthoryear{Asuncion and Newman}{2007}]{UCI}
\begin{botherref}
\oauthor{\bsnm{Asuncion}, \binits{A.}},
\oauthor{\bsnm{Newman}, \binits{D.}}:
UCI machine learning repository
(2007)
\end{botherref}
\endbibitem

\bibitem[\protect\citeauthoryear{Causation and Prediction}{2008}]{sido}
\begin{botherref}
\oauthor{\bsnm{Causation}, \binits{C.C..}},
\oauthor{\bsnm{Prediction}}:
\url{http://www.causality.inf.ethz.ch/data/SIDO.html}
(2008)
\end{botherref}
\endbibitem

\bibitem[\protect\citeauthoryear{Oliphant}{2006}]{numpy}
\begin{botherref}
\oauthor{\bsnm{Oliphant}, \binits{T.E.}}:
A guide to numpy.
Trelgol Publishing USA
(2006)
\end{botherref}
\endbibitem

\bibitem[\protect\citeauthoryear{Jones et~al.}{2001--}]{scipy}
\begin{botherref}
\oauthor{\bsnm{Jones}, \binits{E.}},
\oauthor{\bsnm{Oliphant}, \binits{T.}},
\oauthor{\bsnm{Peterson}, \binits{P.}}, et al.:
{SciPy}: Open source scientific tools for {Python}.
[Online; accessed <today>]
(2001--).
\url{http:\/\/www.scipy.org/}
\end{botherref}
\endbibitem

\bibitem[\protect\citeauthoryear{Dolan and Mor{\'e}}{2002}]{dolan2002benchmarking}
\begin{barticle}
\bauthor{\bsnm{Dolan}, \binits{E.D.}},
\bauthor{\bsnm{Mor{\'e}}, \binits{J.J.}}:
\batitle{Benchmarking optimization software with performance profiles}.
\bjtitle{Mathematical programming}
\bvolume{91}(\bissue{2}),
\bfpage{201}--\blpage{213}
(\byear{2002})
\end{barticle}
\endbibitem

\bibitem[\protect\citeauthoryear{Mor{\'e} and Wild}{2009}]{more2009benchmarking}
\begin{barticle}
\bauthor{\bsnm{Mor{\'e}}, \binits{J.J.}},
\bauthor{\bsnm{Wild}, \binits{S.M.}}:
\batitle{Benchmarking derivative-free optimization algorithms}.
\bjtitle{SIAM Journal on Optimization}
\bvolume{20}(\bissue{1}),
\bfpage{172}--\blpage{191}
(\byear{2009})
\end{barticle}
\endbibitem

\end{thebibliography}

\end{document}